\newtheorem{theorem}{Theorem}[section]
\newtheorem{lemma}[theorem]{Lemma}
\newtheorem{proposition}[theorem]{Proposition}
\newtheorem{corollary}[theorem]{Corollary}
\newtheorem{conjecture}[theorem]{Conjecture}
\newtheorem{definition}{Definition}[section]
\begin{document}
\title{Motivic Steenrod operations in characteristic $p$}
\author{Eric Primozic}

\maketitle
\setcounter{secnumdepth}{1}
\section*{Introduction}
\addcontentsline{toc}{section}{Introduction}
\ 

\indent Voevodsky constructed motivic reduced power operations $P^{n}_{F}$ for $n \geq0$ where the base field $F$ is a perfect field with $\mathrm{char}(F)$ not equal to the characteristic $p>0$ of the coefficient field \cite{Voe}. These operations were used in the proof of the Bloch-Kato conejcture. Brosnan gave an elementary construction of Steenrod operations on mod $p$ Chow groups over a base field of characteristic $\neq p$ \cite{Bro}. Steenrod operations on Chow groups have been used succesfully in the study of quadratic forms over a base field of characteristic $\neq 2$ and to prove degree formulas in algebraic geometry as in \cite{EKM} and \cite{Mer}.

\indent For a prime $p$, Voevodsky's construction of Steenrod operations for the coefficient field $\mathbb{F}_{p}$ uses the calculation of the motivic cohomology of $BS_{p}$. However, when defined over a base field $k$ of characteristic $p$, $B\mathbb{Z}/p$ is contractible \cite[Proposition 3.3]{MorVoe}. Hence, over the base field $k$, $H^{*,*}(BS_{p}, \mathbb{F}_{p})\cong H^{*,*}(k, \mathbb{F}_{p})$ and so one cannot carry out Voevodsky's construction. It has also been an open problem to just define Steenrod operations on the mod $p$ Chow groups of smooth schemes over a field of characteristic $p$. Haution made progress on this problem by constructing the first $p-1$ homological Steenrod operations on Chow groups mod $p$ and $p$-primary torsion over any base field \cite{Hau4}, defining the first Steenrod square on mod $2$ Chow groups over any base field \cite{Hau2}, and constructing weak forms of the second and third Steenrod squares over a field of characteristic $2$ \cite{Hau5}. Note that in papers where Steenrod squares (or weak forms of Steenrod squares) on mod $2$ Chow groups are used, the $n$th Steenrod square on mod $2$ Chow groups corresponds to the $2n$th Steenrod square on mod $2$ motivic cohomology since the Bockstein homomorphism is $0$ on mod $2$ Chow groups.

\indent For $p$ a prime,  we use the results of Frankland and Spitzweck in \cite{FrankSpi} to define Steenrod operations $P_{k}^{n}: H^{i,j}(-, \mathbb{F}_p) \to H^{i+2n(p-1),j+n(p-1)}(-, \mathbb{F}_p) $ for $n \geq 0$ on the mod $p$ motivic cohomology of smooth schemes over a field $k$ of characteristic $p$. Note that some authors use the notation $H^{i}(-,\mathbb{Z}(j))$ in place of $H^{i,j}(-, \mathbb{Z})$ to denote motivic cohomology. For $ n \geq 1$, we show that $P^{n}_{k}$ is the $p$th power on $H^{2n,n}(-,\mathbb{F}_{p})=CH^{n}(-)/p$, and we also prove an instability result for the Steenrod operations. Restricted to mod $p$ Chow groups, we prove that the $P_{k}^i$ satisfy expected properties such as Adem relations and the Cartan formula. We also show that the operations $P^{n}_{k}$ agree with the operations $P^{n}_{K}$, constructed by Voevodsky for $\mathrm{char}(K)=0$, on the mod $p$ Chow rings of flag varieties in characteristic $0$.

\indent In Section \ref{Rost}, we extend Rost's degree formula \cite[Theorem 6.4]{Mer}  to a base field of arbitrary characteristic. The degree formula we obtain at odd primes seems to be new. In Section \ref{S:appquad}, we use the new operations to study quadratic forms defined over a field of characteristic $2$. Previous results or proofs avoided the case of quadratic forms in characteristic $2$ since Steenrod squares were not available. We prove Hoffmann's conjecture (a generalization including characteristic $2$ quadratic forms) on the possible values of the first Witt index of anisotropic quadratic forms for the case of nonsingular anisotropic quadratic forms over a field of characteristic $2$. In characteristic $\neq 2$, Hoffmann's conjecture was proved by Karpenko in \cite{Kar}. Previously, Haution used a weak form of the first homological Steenrod square to prove a result on the parity of the first Witt index for nonsingular anisotropic quadratic forms over a field of characteristic $2$ \cite[Theorem 6.2]{Hau}. Haution's result is a corollary of the case of Hoffmann's conjecture proved in this paper. Using the Steenrod squares defined in this paper, it should be possible to extend other results on quadratic forms to the case where the base field has characteristic $2$.

\section*{Acknowledgments} I thank Burt Totaro for suggesting this project to me and for providing advice. I am very grateful to Marc Hoyois for answering my questions and telling me the strategy used in the proof of Proposition \ref{T:pullbaclEB} along with most of the details of the proof. I thank Markus Spitzweck for answering my questions. I thank Chuck Weibel for his comments. I thank Nikita Karpenko for telling me about some of the applications of the new Steenrod operations to quadratic forms in characteristic $2$ given in Section \ref{S:appquad}.

\section{Prior results on the dual Steenrod algebra and setup}

\indent Let $k$ be a field of characteristic $p>0$. For a base scheme $S$, we let $\textup{Sm}_{S}$ denote the category of quasi-projective separated smooth schemes of finite type over $S$, let $H(S)$ denote the unstable motivic homotopy category of spaces over $S$ defined by Morel-Voevodsky \cite{MorVoe}, let $H_{\bullet}(S)$ denote the pointed unstable motivic homotopy category of spaces over $S$, and we let $SH(S)$ denote the stable motivic homotopy category of spectra over $S$ \cite{Voe3}. Let $$\Sigma^{\infty}_{+}:\textup{Sm}_{S} \to SH(S),$$ 
$$\Sigma^{\infty}_{+}:H(S) \to H_{\bullet}(S) \to SH(S)$$ denote the infinite $\mathbb{P}^{1}$-suspension functors. 

\indent We recall some results from \cite{Spi} and \cite{FrankSpi} that hold in the categories $H(k)$ and $SH(k)$. Let $B\mu_{p} \in H(k)$ denote the geometric motivic classifying space of the group scheme $\mu_{p}$ over $k$ of the $p$th roots of unity. Let $H\mathbb{F}^{k}_{p} \in SH(k)$ denote the motivic Eilenberg-MacLane spectrum representing mod $p$ motivic cohomology. Let $v \in H^{2,1}(B\mu_{p}, \mathbb{F}_p)$ denote the pullback of the first Chern class $c_{1}\in H^{2,1}(B\mathbb{G}_{m}, \mathbb{F}_p)$. From the computation of the motivic cohomology of $B\mu_{p}$ in \cite[Theorem 6.10]{Voe}, there exists a unique $u \in H^{1,1}(B\mu_{p}, \mathbb{F}_p)$ such that $\beta(u)=v$ where $\beta$ denotes the Bockstein homomorphism on mod $p$ motivic cohomology. The class  $\rho=-1$ in $H^{1,1}(k, \mathbb{F}_{p})=k^{*}/k^{* \, p}$ is $0$ and the class of $\tau \in H^{0,1}(k, \mathbb{F}_{p})=\mu_{p}(k)=0$ described in \cite[Theorem 6.10]{Voe} is also $0$. We need the following computation which can be deduced from \cite[Theorem 6.10]{Voe} by setting $\rho=0$ and $\tau=0$. 

\begin{theorem} \label{T:comp of coh bmu}
There is an isomorphism $$H^{*,*}(B\mu_{p}, \mathbb{F}_{p}) \cong H^{*,*}(k, \mathbb{F}_{p})\llbracket v,u\rrbracket /(u^{2}).$$
\end{theorem}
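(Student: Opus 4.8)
The plan is to follow the argument behind Voevodsky's computation \cite[Theorem 6.10]{Voe} and to check that it goes through verbatim over a field $k$ of characteristic $p$. The only role of the hypothesis $\mathrm{char}(k)=p$ is that the two $k$-dependent parameters in Voevodsky's presentation --- namely $\rho=[-1]\in k^{*}/k^{*p}$ and $\tau\in H^{0,1}(k,\mathbb{F}_{p})=\mu_{p}(k)$ --- both vanish (as already observed in the paragraph preceding the statement), so that the relation $u^{2}=\tau v+\rho u$ of Voevodsky's presentation simplifies to $u^{2}=0$. Concretely I would work with the geometric model $B\mu_{p}\simeq\operatorname*{colim}_{n}(\mathbb{A}^{n}\setminus\{0\})/\mu_{p}$ arising from the free scalar action of $\mu_{p}\subset\mathbb{G}_{m}$, and I would use that $(\mathbb{A}^{n}\setminus\{0\})/\mu_{p}$ is the complement $E_{n}$ of the zero section in the total space of $\mathcal{O}_{\mathbb{P}^{n-1}}(-p)$ (equivalently $L^{\times}/\mu_{p}\cong(L^{\otimes p})^{\times}$ for the tautological bundle $L=\mathcal{O}(-1)$); these are smooth quasi-projective $k$-schemes, so the model is available in characteristic $p$.

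Next I would run the Gysin (localization) sequence for the zero section $\mathbb{P}^{n-1}\hookrightarrow\mathcal{O}_{\mathbb{P}^{n-1}}(-p)$ with open complement $E_{n}$, using the motivic projective bundle formula $H^{*,*}(\mathbb{P}^{n-1},\mathbb{F}_{p})\cong H^{*,*}(k,\mathbb{F}_{p})[v]/(v^{n})$. After identifying the cohomology of the total space with that of $\mathbb{P}^{n-1}$, the pushforward along the zero section becomes multiplication by the Euler class $e(\mathcal{O}_{\mathbb{P}^{n-1}}(-p))=-pv$, which is zero with $\mathbb{F}_{p}$-coefficients; hence the long exact sequence splits into short exact sequences
$$0\to H^{a,b}(\mathbb{P}^{n-1},\mathbb{F}_{p})\xrightarrow{\ j^{*}\ }H^{a,b}(E_{n},\mathbb{F}_{p})\xrightarrow{\ \partial\ }H^{a-1,b-1}(\mathbb{P}^{n-1},\mathbb{F}_{p})\to 0.$$
Passing to the limit over $n$ --- there is no $\lim^{1}$-obstruction, as the transition maps are surjective in each bidegree --- exhibits $H^{*,*}(B\mu_{p},\mathbb{F}_{p})$ as a free module over $H^{*,*}(k,\mathbb{F}_{p})\llbracket v\rrbracket$ on $1$ and on a class $u\in H^{1,1}$ with $\partial u=1$, where $v$ is the pullback of $c_{1}$ and $u$ is normalized by $\beta u=v$ (which determines $u$, as in \cite{Voe}). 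Crucially, the projective bundle formula and the Gysin sequence used here are available over an arbitrary perfect base field by the results of \cite{Spi} and \cite{FrankSpi}, so this step is insensitive to $\mathrm{char}(k)$.

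It then remains to show $u^{2}=0$. From the module structure, $u^{2}$ lies in $H^{2,2}(B\mu_{p},\mathbb{F}_{p})=H^{2,2}(k,\mathbb{F}_{p})\cdot 1\oplus H^{0,1}(k,\mathbb{F}_{p})\cdot v\oplus H^{1,1}(k,\mathbb{F}_{p})\cdot u$. Since $B\mu_{p}$, $u$ and $v$ are already defined over the prime field $\mathbb{F}_{p}$ and, by the freeness just established, $H^{*,*}(B\mu_{p,k},\mathbb{F}_{p})\cong H^{*,*}(B\mu_{p,\mathbb{F}_{p}},\mathbb{F}_{p})\otimes_{H^{*,*}(\mathbb{F}_{p},\mathbb{F}_{p})}H^{*,*}(k,\mathbb{F}_{p})$, it suffices to check $u^{2}=0$ over $\mathbb{F}_{p}$; there all three relevant coefficient groups vanish, since $H^{2,2}(\mathbb{F}_{p},\mathbb{F}_{p})=K^{M}_{2}(\mathbb{F}_{p})/p=0$, $H^{0,1}(\mathbb{F}_{p},\mathbb{F}_{p})=\mu_{p}(\mathbb{F}_{p})=0$, and $H^{1,1}(\mathbb{F}_{p},\mathbb{F}_{p})=\mathbb{F}_{p}^{*}/(\mathbb{F}_{p}^{*})^{p}=0$. (For $p$ odd one can instead argue directly that $u^{2}=-u^{2}=0$ since $u$ has odd cohomological degree.) Combining this with the module computation yields $H^{*,*}(B\mu_{p},\mathbb{F}_{p})\cong H^{*,*}(k,\mathbb{F}_{p})\llbracket v,u\rrbracket/(u^{2})$. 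The step I expect to require the most care is not any individual computation but verifying that the geometric ingredients of Voevodsky's argument --- the projective bundle formula and the Gysin/localization sequence for the zero section of a line bundle, together with their compatibility with base change --- really are available over a perfect field of characteristic $p$; granting the framework of \cite{Spi} and \cite{FrankSpi} this is routine.
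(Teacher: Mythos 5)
Your proposal is correct and follows the same route as the paper: the paper's entire argument is the remark preceding the statement that the theorem ``can be deduced from \cite[Theorem 6.10]{Voe} by setting $\rho=0$ and $\tau=0$,'' and what you have written out is precisely the verification --- geometric model $\operatorname*{colim}_n(\mathbb{A}^n\setminus\{0\})/\mu_p$, projective bundle formula, split Gysin sequence since the Euler class $-pv$ vanishes mod $p$, and passage to the limit --- that Voevodsky's computation transfers to a base of characteristic $p$, which the paper leaves implicit. The only point where you supply genuinely independent content is the case $p=2$ of $u^2=0$, where instead of specializing Voevodsky's relation $u^{2}=\tau v+\rho u$ you rederive the vanishing by identifying $u$ with the pullback of the corresponding class over the prime field $\mathbb{F}_{2}$ and checking that $H^{2,2}$, $H^{1,1}$ and $H^{0,1}$ of $\mathbb{F}_{2}$ all vanish; this is a correct and self-contained substitute.
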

Note that $H^{*,*}(B\mu_{p}, \mathbb{F}_{p})$ is defined in \cite{Voe} as a limit of motivic cohomology rings of smooth schemes over the base field. This explains why power series appear in the above theorem.

\indent Let $\mathcal{A}^{k}_{*,*} \coloneqq \pi_{*,*}(H\mathbb{F}^{k}_{p} \wedge H\mathbb{F}^{k}_{p})$. As described in \cite[Chapter 10.2]{Spi}, there is a coaction map \begin{equation} \label{eq:coaction} H^{*,*}(B\mu_{p}, \mathbb{F}_p) \to \mathcal{A}^{k}_{-*,-*} \widehat{\otimes}_{\pi_{-*,-*}H\mathbb{F}_{p}^{k}}H^{*,*}(B\mu_{p}, \mathbb{F}_p). \end{equation} We use the left $H\mathbb{F}_{p}^{k}$-module structure on $H\mathbb{F}_{p}^{k}\wedge H\mathbb{F}_{p}^{k}$ for this coaction map. For $i\geq 0$ and $j \geq 1$, classes $\tau_{i} \in \mathcal{A}^{k}_{2p^{i}-1,p^{i}-1} $ and $\xi_{j} \in \mathcal{A}^{k}_{2p^{j}-2,p^{j}-1}$ are defined by the coaction map: $$     
 u \mapsto u+ \Sigma _{i\geq 0}\tau_{i} \otimes v^{p^{i}},$$
 
$$v \mapsto v+ \Sigma _{j\geq 1}\xi_{j} \otimes v^{p^{j}}.$$

\begin{proposition} \label{P:tau2=0} $\tau_{i}^{2}=0$ for all $i \geq 0$.
\end{proposition}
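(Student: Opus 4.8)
The plan is to apply the coaction map \eqref{eq:coaction} to the relation $u^{2}=0$ supplied by Theorem \ref{T:comp of coh bmu}, exploiting that this map is a homomorphism of bigraded rings. This multiplicativity is the crucial external input: since $\mu_{p}$ is commutative, $\Sigma^{\infty}_{+}B\mu_{p}$ is a ring spectrum and $H^{*,*}(B\mu_{p},\mathbb{F}_{p})$ is a comodule algebra over the dual Steenrod algebra, so by the construction recalled from \cite[Chapter 10.2]{Spi} and \cite{FrankSpi} the coaction map $\psi$ is multiplicative, where the target $\mathcal{A}^{k}_{-*,-*}\widehat{\otimes}_{\pi_{-*,-*}H\mathbb{F}_{p}^{k}}H^{*,*}(B\mu_{p},\mathbb{F}_{p})$ carries the (Koszul‑signed) tensor‑product ring structure.

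First I would set up the bookkeeping. By Theorem \ref{T:comp of coh bmu} there is a direct sum decomposition of bigraded $H^{*,*}(k,\mathbb{F}_{p})$‑modules
\[
H^{*,*}(B\mu_{p},\mathbb{F}_{p}) \;=\; H^{*,*}(k,\mathbb{F}_{p})\llbracket v\rrbracket \;\oplus\; H^{*,*}(k,\mathbb{F}_{p})\llbracket v\rrbracket\, u ,
\]
with each summand (profinitely) free on the monomials $v^{n}$, resp. $uv^{n}$. Completing $\mathcal{A}^{k}_{-*,-*}\otimes_{\pi_{-*,-*}H\mathbb{F}_{p}^{k}}(-)$ respects this decomposition, so the target of $\psi$ splits as a ``$u$‑free part'' plus a ``$u$‑part'', and within each part one may read off the coefficient of a given power $v^{n}$; an element of the completed target that vanishes has all these coefficients equal to $0$.

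Now I would expand $0=\psi(u^{2})=\psi(u)^{2}$ using $\psi(u)=1\otimes u+\sum_{i\geq 0}\tau_{i}\otimes v^{p^{i}}$. The term $(1\otimes u)^{2}=1\otimes u^{2}=0$. The two mixed terms $(1\otimes u)(\tau_{i}\otimes v^{p^{i}})$ and $(\tau_{i}\otimes v^{p^{i}})(1\otimes u)$ are each a unit multiple of $\tau_{i}\otimes uv^{p^{i}}$, hence lie in the $u$‑part and do not affect the $u$‑free part (in fact a sign check shows they cancel, but one does not need this). Therefore the $u$‑free part of $\psi(u)^{2}$ is $\sum_{i,j\geq 0}\varepsilon_{i,j}\,\tau_{i}\tau_{j}\otimes v^{p^{i}+p^{j}}$ for suitable Koszul signs $\varepsilon_{i,j}\in\{\pm 1\}$ (the motivic twist $\langle -1\rangle$ reduces to $1$ modulo $p$), and this element must be $0$. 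Comparing the coefficient of $v^{2p^{i}}$ and using that $2p^{i}=p^{j}+p^{k}$ forces $j=k=i$ by uniqueness of base‑$p$ expansions, the only contribution is $\varepsilon_{i,i}\,\tau_{i}^{2}\otimes v^{2p^{i}}$, whence $\tau_{i}^{2}=0$.

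The only real work lies in the first two paragraphs: pinning down that $\psi$ is multiplicative in the bigraded motivic sense, and that the monomial bookkeeping in the completed tensor product behaves as stated (in particular that comparing $v$‑coefficients is legitimate). Granting that, the computation is immediate, and in fact one never uses the precise values of the signs $\varepsilon_{i,j}$, only that $\varepsilon_{i,i}$ is invertible.
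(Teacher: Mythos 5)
Your proof is correct. For $p=2$ it is essentially the paper's argument: apply the (multiplicative) coaction \ref{eq:coaction} to $u^{2}=0$ and compare coefficients of $v^{2^{i+1}}$, which is exactly what the proof of Proposition \ref{P:tau2=0} does, citing Voevodsky's \cite[Theorem 12.6]{Voe}. The one genuine difference is at odd $p$: you run the same coaction computation uniformly, which obliges you to handle the Koszul signs, the mixed $u$-terms, and the digit-counting fact that $2p^{i}=p^{j}+p^{k}$ forces $j=k=i$ (all of which you do correctly), whereas the paper simply observes that $\tau_{i}$ has odd cohomological degree $2p^{i}-1$, so graded-commutativity of $\mathcal{A}^{k}_{*,*}$ gives $\tau_{i}^{2}=-\tau_{i}^{2}$ and hence $\tau_{i}^{2}=0$ since $p$ is odd. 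The paper's odd-$p$ argument is shorter and needs neither Theorem \ref{T:comp of coh bmu} nor the coaction; your version buys uniformity in $p$ at the cost of the extra bookkeeping you flag in your first two paragraphs.
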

\begin{proof} We use the argument of \cite[Theorem 12.6]{Voe}. First, we assume that $\mathrm{char}(k)=2$. Under the coaction map \ref{eq:coaction}, $$u^{2}=0 \mapsto u^{2}+\Sigma _{i\geq 0}\tau^{2}_{i} \otimes v^{2^{i+1}}=0.$$ For $i \geq0$, the coefficient of $v^{2^{i+1}}$ equals $0=\tau^{2}_{i}$.

\indent Now we assume that $p=\mathrm{char}(k)$ is odd. Let $i\geq0$. As  $\mathcal{A}^{k}_{*,*}$ is graded-commutative under the first grading, we have $\tau^{2}_{i}=(-1)^{(2p^{i}-1)(2p^{i}-1)}\tau^{2}_{i}=-\tau^{2}_{i}$ which implies that $\tau^{2}_{i}=0$. 
\end{proof}
\indent In this paper, we shall consider finite sequences $\alpha=(\epsilon_{0},r_{1},\epsilon_{1},r_{2},\ldots )$ of integers such that $\epsilon_{i}\in \{0,1\}$ and $r_{j} \geq 0$ for all $i \geq0$ and $j\geq 1$. From now on, it will be assumed that any sequence $\alpha$ in this paper satisfies these conditions. To a sequence $\alpha$, we associate a monomial $\omega(\alpha)=\tau_{0}^{\epsilon_{0}}\xi_{1}^{r_1}\tau_{1}^{\epsilon_{1}} \cdots \in \mathcal{A}^{k}_{*,*}$ of bidegree $(p_{\alpha},q_{\alpha})$. The sequences $\alpha$ induce a morphism $$\Psi_{k}: \bigoplus\limits _{\alpha} \Sigma ^{p_{\alpha},q_{\alpha}}H\mathbb{F}^{k}_{p} \to H\mathbb{F}^{k}_{p} \wedge H\mathbb{F}^{k}_{p}$$ of left $H\mathbb{F}^{k}_{p}$-modules. Frankland and Spitzweck proved the following theorem \cite[Theorem 1.1]{FrankSpi} which allows us to define Steenrod operations on mod $p$ motivic cohomology over the base $k$.

\begin{theorem} \label{T:Psidef}
The morphism $$\Psi_{k}: \bigoplus\limits _{\alpha} \Sigma ^{p_{\alpha},q_{\alpha}}H\mathbb{F}^{k}_{p} \to H\mathbb{F}^{k}_{p} \wedge H\mathbb{F}^{k}_{p}$$ is a split monomorphism of left $H\mathbb{F}^{k}_{p}$-modules.
\end{theorem}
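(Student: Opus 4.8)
The plan is to reduce the assertion to a computation of the bigraded homotopy groups $\mathcal{A}^{k}_{*,*}$ as a left $\pi_{*,*}H\mathbb{F}^{k}_{p}$-module. The motivic Eilenberg--MacLane spectrum $H\mathbb{F}^{k}_{p}$ is cellular --- this holds over any field, and over a Dedekind base by Spitzweck's construction --- so both $H\mathbb{F}^{k}_{p}\wedge H\mathbb{F}^{k}_{p}$ and the source $\bigoplus_{\alpha}\Sigma^{p_{\alpha},q_{\alpha}}H\mathbb{F}^{k}_{p}$ are cellular left $H\mathbb{F}^{k}_{p}$-modules, and on cellular $H\mathbb{F}^{k}_{p}$-modules the functor $\pi_{*,*}$ is conservative. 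It therefore suffices to show that $\mathcal{A}^{k}_{*,*}$ is a free $\pi_{*,*}H\mathbb{F}^{k}_{p}$-module with the monomials $\omega(\alpha)=\tau_{0}^{\epsilon_{0}}\xi_{1}^{r_{1}}\tau_{1}^{\epsilon_{1}}\cdots$ as a basis: then $\pi_{*,*}\Psi_{k}$ is an isomorphism and $\Psi_{k}$ is in fact an equivalence, a fortiori a split monomorphism. (Even if one only identifies the $\omega(\alpha)$ with a basis of a free direct summand of $\mathcal{A}^{k}_{*,*}$, one still concludes, by realizing the complementary summand as a wedge of suspensions of $H\mathbb{F}^{k}_{p}$ and adding it to $\Psi_{k}$ to produce a $\pi_{*,*}$-isomorphism.)

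The computation of $\mathcal{A}^{k}_{*,*}$ has to be carried out intrinsically in characteristic $p$. Because $B\mathbb{Z}/p$ is contractible over $k$, Voevodsky's characteristic $0$ computation cannot be transported, and every place where $B\mathbb{Z}/p$ enters the characteristic $\neq p$ argument must be replaced by $B\mu_{p}$, whose motivic cohomology is still given by Theorem~\ref{T:comp of coh bmu} (with $\rho=\tau=0$). One may first base change along $\mathrm{Spec}\,k\to\mathrm{Spec}\,\mathbb{F}_{p}$, which preserves the map $\Psi$, cellularity, and split monomorphisms, in order to work over a convenient base. Linear independence of the $\omega(\alpha)$ is then extracted from the coaction map~\eqref{eq:coaction}: applying the Künneth isomorphism over $k$ to the smash powers $(B\mu_{p})^{\wedge n}$, one sees that the coaction is triangular with respect to the monomial grading on products of powers of the classes $v$ and $u$, so that each $\omega(\alpha)$ is detected by a leading coefficient of the coaction on a suitable such monomial, and no nontrivial $\pi_{*,*}H\mathbb{F}^{k}_{p}$-linear relation among the $\omega(\alpha)$ can hold. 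That the $\omega(\alpha)$ also span --- equivalently, that $\mathcal{A}^{k}_{*,*}$ is exactly $\pi_{*,*}H\mathbb{F}^{k}_{p}[\xi_{1},\xi_{2},\dots]\otimes\Lambda(\tau_{0},\tau_{1},\dots)$, the relations $\tau_{i}^{2}=0$ being Proposition~\ref{P:tau2=0} --- follows by re-running Voevodsky's and Spitzweck's computation of the $H\mathbb{F}^{k}_{p}$-homology of $H\mathbb{F}^{k}_{p}$ with the spaces $B\mu_{p}$ and their Thom spaces in place of the corresponding $\mathbb{Z}/p$-constructions, using Theorem~\ref{T:comp of coh bmu} as the fundamental input.

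The principal obstacle is precisely the lack of a case that is already understood to which one could compare: over a field of characteristic $\neq p$ the structure of the motivic Steenrod algebra is known, but nothing of that survives reduction to characteristic $p$, and since the prime in question equals the residue characteristic there is also no $\ell$-adic étale or Betti realization available. The argument is thus forced to be intrinsic, and it depends on Spitzweck's construction of $H\mathbb{F}_{p}$ over general bases so that the Künneth isomorphism, cellularity, and the computation of $H^{*,*}(B\mu_{p},\mathbb{F}_{p})$ are all simultaneously at hand and mutually compatible. Two further technical points require care. First, $H^{*,*}(B\mu_{p},\mathbb{F}_{p})$ is only a limit of motivic cohomology rings of smooth schemes, so throughout the argument one must work with the completed tensor product $\widehat{\otimes}$ and with pro-objects --- this is what produces the power series in Theorem~\ref{T:comp of coh bmu}, whereas $\mathcal{A}^{k}_{*,*}$, being genuine homotopy, is an honest (polynomial, not power-series) free module. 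Second, passing from the algebraic splitting of $\pi_{*,*}\Psi_{k}$ back to a split monomorphism of spectra uses the cellularity of all the spectra involved together with the vanishing of phantom maps, which is what permits the algebraically constructed retraction to be realized in $SH(k)$.
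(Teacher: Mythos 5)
Your proposal has a genuine gap, and it stems from a misassessment of what is known in characteristic $p$.

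You claim that the argument must be ``intrinsic,'' i.e.\ that one can re-run Voevodsky's and Spitzweck's computation of the dual Steenrod algebra using only $B\mu_p$ in place of $B\mathbb{Z}/p$, and thereby show that $\mathcal{A}^{k}_{*,*}$ is exactly the free $\pi_{*,*}H\mathbb{F}^{k}_p$-module on the monomials $\omega(\alpha)$, so that $\pi_{*,*}\Psi_k$ is an isomorphism and $\Psi_k$ is an equivalence. But this is not known: the paper states explicitly, immediately after Theorem~\ref{T:Psidef}, that ``it is conjectured that $\Psi_k$ is an isomorphism.'' If your intrinsic argument worked, it would settle that conjecture, not merely the split monomorphism statement. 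The content that fails is precisely the spanning step: with $B\mathbb{Z}/p$ contractible over $k$, one lacks the comparison between $B\mathbb{Z}/p$, $B\mu_p$, and their Thom spaces that drives the surjectivity part of Voevodsky's computation, and Theorem~\ref{T:comp of coh bmu} alone does not produce it. The coaction formulas used to define $\tau_i$ and $\xi_j$ detect these classes but do not show they generate $\mathcal{A}^{k}_{*,*}$ over $\pi_{*,*}H\mathbb{F}^{k}_p$. Likewise your fallback --- realizing a complementary summand as a wedge of suspensions of $H\mathbb{F}^k_p$ --- presupposes knowledge of $\mathcal{A}^{k}_{*,*}$ that is unavailable.

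You also dismiss a comparison to characteristic $0$ as impossible (``nothing of that survives reduction to characteristic $p$ \dots no $\ell$-adic étale or Betti realization available''), but this is exactly the route the paper, following Frankland--Spitzweck \cite[Theorem~1.1, Theorem~5.1]{FrankSpi}, actually takes. One works over a complete unramified DVR $D$ with residue field $k$ and fraction field $K$ of characteristic $0$, lifts $\Psi_k$ and $\Psi_K$ to a map $\Psi_D$ of $\widehat{H}\mathbb{F}^D_p$-modules, and uses the splitting $i^*j_*H\mathbb{F}^K_p \cong H\mathbb{F}^k_p \oplus \Sigma^{-1,-1}H\mathbb{F}^k_p$ of Theorem~\ref{T:defofpi0andpi} together with the known isomorphism $\Psi_K$ to build an explicit retraction
\[
r \;=\; \bigl(\oplus\pi\bigr)\circ i^*j_*\Psi_K^{-1}\circ i^*\eta \;:\; H\mathbb{F}^k_p\wedge H\mathbb{F}^k_p \longrightarrow \bigoplus_{\alpha}\Sigma^{p_\alpha,q_\alpha}H\mathbb{F}^k_p,
\]
as displayed after diagram~\eqref{PsiDcommute}. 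This gives the split monomorphism without ever computing $\mathcal{A}^{k}_{*,*}$, which is the whole point. Your proof both overreaches (claiming the open isomorphism) and discards the mechanism (the $D$-model and $i^*j_*$) that the actual proof depends on.
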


\indent  It is conjectured that $\Psi_{k}$ is an isomorphism. Frankland and Spitzweck proved this theorem by comparing $\Psi_{k}$ to the corresponding isomorphism \begin{equation} \label{eq:char0dual}\Psi_{K}: \bigoplus\limits _{\alpha} \Sigma ^{p_{\alpha},q_{\alpha}}H\mathbb{F}^{K}_{p} \to H\mathbb{F}^{K}_{p} \wedge H\mathbb{F}^{K}_{p} \end{equation} of left $H\mathbb{F}^{K}_{p}$-modules for $\mathrm{char}(K)=0$. From now on, we will identify $\bigoplus\limits _{\alpha} \Sigma ^{p_{\alpha},q_{\alpha}}H\mathbb{F}^{K}_{p}$ with $H\mathbb{F}^{K}_{p} \wedge H\mathbb{F}^{K}_{p}$ as left $H\mathbb{F}_{p}^{K}$-modules through $\Psi_{K}$ whenever $K$ is a field of characteristic $0$. Let $D$ be a complete unramified discrete valuation ring with closed point $i:\mathrm{Spec}(k) \to \mathrm{Spec}(D)$ and generic point $j:\mathrm{Spec}(K) \to \mathrm{Spec}(D)$ where $K=\mathrm{Frac}(D)$. For example, when $k=\mathbb{F}_{p}$, we take $D=\mathbb{Z}_{p}$ and $K=\mathbb{Q}_{p}$.

\indent For a morphism $f: S_{1} \to S_{2}$ of base schemes, we let $f_{*}\coloneqq Rf_{*}: SH(S_{1}) \to SH(S_{2})$ and $f^{*} \coloneqq Lf^{*}:SH(S_{2}) \to SH(S_{1})$ denote the right derived pushforward and left derived pullback functors respectively. Pullback $f^{*}$ is strongly monoidal while $f_{*}$ is lax monoidal. Furthermore, $f_{*}$ commutes with all suspensions $\Sigma^{i,j}$ \cite[Lemma 7.5]{FrankSpi}. We also note that $f_{*}$ preserves coproducts \cite[Lemma 7.4]{FrankSpi}.

\indent For $S$ a separated Noetherian scheme of finite dimension, we let $\widehat{H}\mathbb{Z}^{S}\in SH(S)$ denote the motivic $E_{\infty}$ ring spectrum constructed by Spitzweck in \cite{Spi} and let $\widehat{H}\mathbb{F}^{S}_{p}\coloneqq \widehat{H}\mathbb{Z}^{S}/p$ . Let $D(\widehat{H}\mathbb{Z}^{S})$ denote the homotopy category of left $\widehat{H}\mathbb{Z}^{S}$-modules. See \cite[Section 7.2]{CisDeg} and \cite[Sections 2 and 3]{FrankSpi} for a discussion on the homotopy category of left $R$-modules $D(R)$ for a highly structured ring spectrum $R$. There is a forgetful functor $U_{S}:D(\widehat{H}\mathbb{Z}^{S}) \to SH(S)$.

\indent The spectrum $\widehat{H}\mathbb{Z}^{S}$ enjoys a number of desirable properties. The spectrum $\widehat{H}\mathbb{Z}^{S}$ is Cartesian. This means that for a morphism $f:S_{1} \to S_{2}$ of base schemes, the induced morphism $f^{*}\widehat{H}\mathbb{Z}^{S_{2}} \to \widehat{H}\mathbb{Z}^{S_{1}}$ is an isomorphism in $SH(S_{1})$ of $E_{\infty}$ ring spectra \cite[Chapter 9]{Spi}. Throughout this paper, we will frequently identify $f^{*}\widehat{H}\mathbb{Z}^{S_{2}}$ with $\widehat{H}\mathbb{Z}^{S_{1}}$ whenever we are given a morphism $f:S_{1} \to S_{2}$ of base schemes. See also \cite[Section 2]{FrankSpi}. Hence, the square 
\[
\begin{tikzcd}
D(\widehat{H}\mathbb{Z}^{S_{2}}) \arrow[r, "f^{*}"] \arrow[d,"U_{S_{2}}"] 
& D(\widehat{H}\mathbb{Z}^{S_{1}}) \arrow[d, "U_{S_{1}}"]\\
SH(S_{2}) \arrow[r, "f^{*}"] & SH(S_{1})
\end{tikzcd}
\]
commutes.

\indent For $S=\mathrm{Spec}(F)$ with $F$ a field, $\widehat{H}\mathbb{Z}^{S}$ is isomorphic as an $E_{\infty}$ ring spectrum to the usual Eilenberg-MacLane spectrum $H\mathbb{Z}^{S}$ constructed by Voevodsky \cite[Theorem 6.7]{Spi}. For the discrete valuation ring $D$,  $\widehat{H}\mathbb{Z}^{D}$ represents Bloch-Levine motivic cohomology as defined in \cite{Lev}. 

\indent We briefly describe the definition of Bloch-Levine motivic cohomology in \cite{Lev} for a discrete valuation ring $D$. Let $X \to \mathrm{Spec}(D)$ be a morphism of finite type with $X$ irreducible. If the image of the generic point $\eta_{X}$ of $X$ is $\mathrm{Spec}(k)$, then we define dim$(X) \coloneqq$dim$(X_{\mathrm{Spec}(k)})$. Otherwise, we define dim$(X) \coloneqq$dim$(X_{\mathrm{Spec}(K)})+1$. For $n\geq 0$, let $\Delta^{n} \coloneqq \mathrm{Spec}(D[t_{0}, \ldots, t_{n}]/ \Sigma_{i}t_{i}-1)$ denote the algebraic $n$-simplex over $D$. Let $z_{q}(X, r)$ denote the free abelian group generated by all irreducible closed subschemes $C \subset \Delta^{r} \times _{\mathrm{Spec}(D)} X$ of dimension $r+q$ such that $C$ meets each face of $\Delta^{r} \times _{\mathrm{Spec}(D)} X$ properly. We then set $z^{q}(X, r)=z_{\mathrm{dim}(X)-q}(X, r)$ so that we get a pullback homomorphism $z^{q}(X, r) \to z^{q}(X, r-1)$ for each face of $\Delta^{r}$. Then the Zariski hypercohomology of the complex $z^{q}(X, *)$ with alternating face maps is Bloch-Levine motivic cohomology (with the appropriate shift).

\begin{theorem} \label{T:defofpi0andpi}

\item The morphism $H\mathbb{F}^{k}_{p}\cong i^{*}(\widehat{H}\mathbb{F}^{D}_{p}) \to i^{*}j_{*}H\mathbb{F}^{K}_{p} \cong i^{*}j_{*}j^{*}\widehat{H}\mathbb{F}^{D}_{p} $ in $D(H\mathbb{F}^{k}_{p})$ induced by adjunction induces a splitting $i^{*}j_{*}H\mathbb{F}^{K}_{p}\cong H\mathbb{F}^{k}_{p} \oplus \Sigma^{-1,-1}H\mathbb{F}^{k}_{p}$ in $D(H\mathbb{F}^{k}_{p})$. We let $\pi:i^{*}j_{*}H\mathbb{F}^{K}_{p} \to H\mathbb{F}^{k}_{p}$ and $\pi_{0}:i^{*}j_{*}H\mathbb{F}^{K}_{p} \to \Sigma^{-1,-1}H\mathbb{F}^{k}_{p}$  denote the projections induced by this splitting. There is also a splitting $i^{*}j_{*}H\mathbb{Z}^{K}\cong H\mathbb{Z}^{k} \oplus \Sigma^{-1,-,1}H\mathbb{Z}^{k}$ in $D(H\mathbb{Z}^{k})$ \cite[Lemma 4.10]{FrankSpi}.

\end{theorem}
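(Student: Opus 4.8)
The plan is to obtain the mod $p$ splitting by reducing the \emph{integral} splitting $i^{*}j_{*}H\mathbb{Z}^{K}\cong H\mathbb{Z}^{k}\oplus\Sigma^{-1,-1}H\mathbb{Z}^{k}$ of \cite[Lemma 4.10]{FrankSpi} modulo $p$. The point of routing through the integral statement is that Spitzweck's spectrum is already known to behave well along $i$ and $j$ there, so no fresh purity input is needed here.

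First I would set $E=\widehat{H}\mathbb{Z}^{D}$ and recall the defining cofibre sequence $E\xrightarrow{\ p\ }E\to\widehat{H}\mathbb{F}^{D}_{p}$ in $D(\widehat{H}\mathbb{Z}^{D})$. Since $j^{*}$ is strong monoidal and exact and $\widehat{H}\mathbb{Z}$ is Cartesian with $\widehat{H}\mathbb{Z}^{K}\cong H\mathbb{Z}^{K}$, we get $j^{*}\widehat{H}\mathbb{F}^{D}_{p}\cong H\mathbb{F}^{K}_{p}$; similarly $i^{*}\widehat{H}\mathbb{F}^{D}_{p}\cong H\mathbb{F}^{k}_{p}$. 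The functors $j^{*}$, $j_{*}$ and $i^{*}$ are all exact (triangulated), so applying the composite $i^{*}j_{*}j^{*}$ to the cofibre sequence above yields a cofibre sequence
\[
i^{*}j_{*}H\mathbb{Z}^{K}\xrightarrow{\ p\ } i^{*}j_{*}H\mathbb{Z}^{K}\longrightarrow i^{*}j_{*}H\mathbb{F}^{K}_{p}
\]
in $D(H\mathbb{Z}^{k})$. Because the adjunction unit $\mathrm{id}\to j_{*}j^{*}$ is natural, applying $i^{*}$ of it to the cofibre sequence for $E$ produces a map of cofibre sequences from $i^{*}(E\xrightarrow{p}E\to\widehat{H}\mathbb{F}^{D}_{p})$ to $i^{*}j_{*}j^{*}(E\xrightarrow{p}E\to\widehat{H}\mathbb{F}^{D}_{p})$; hence $i^{*}j_{*}H\mathbb{F}^{K}_{p}\cong (i^{*}j_{*}H\mathbb{Z}^{K})/p$ \emph{compatibly} with the adjunction morphisms, the mod $p$ adjunction map being the reduction of the integral one.

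Next I would note that multiplication by $p$ is an endomorphism of $i^{*}j_{*}H\mathbb{Z}^{K}$ in $D(H\mathbb{Z}^{k})$, hence respects the direct sum decomposition of \cite[Lemma 4.10]{FrankSpi}: under that decomposition it is $p\cdot\mathrm{id}_{H\mathbb{Z}^{k}}\oplus p\cdot\mathrm{id}_{\Sigma^{-1,-1}H\mathbb{Z}^{k}}$. Since the cofibre of a direct sum of morphisms of $H\mathbb{Z}^{k}$-modules is the direct sum of the cofibres, we obtain
\[
i^{*}j_{*}H\mathbb{F}^{K}_{p}\cong (H\mathbb{Z}^{k}/p)\oplus\Sigma^{-1,-1}(H\mathbb{Z}^{k}/p)\cong H\mathbb{F}^{k}_{p}\oplus\Sigma^{-1,-1}H\mathbb{F}^{k}_{p},
\]
where the summand $H\mathbb{Z}^{k}/p\cong H\mathbb{F}^{k}_{p}$ is included via the mod $p$ reduction of the integral adjunction unit, which by the previous paragraph is exactly $H\mathbb{F}^{k}_{p}\cong i^{*}\widehat{H}\mathbb{F}^{D}_{p}\to i^{*}j_{*}j^{*}\widehat{H}\mathbb{F}^{D}_{p}$. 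Finally I would record that all of this lives in $D(H\mathbb{F}^{k}_{p})$: the target is an $H\mathbb{F}^{k}_{p}$-algebra via $i^{*}$ of the lax-monoidal structure map $\widehat{H}\mathbb{F}^{D}_{p}\to j_{*}j^{*}\widehat{H}\mathbb{F}^{D}_{p}$, the adjunction unit is a morphism of such, and $M/p$ is naturally an $H\mathbb{F}^{k}_{p}$-module for any $H\mathbb{Z}^{k}$-module $M$, so the decomposition is one of $H\mathbb{F}^{k}_{p}$-modules.

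The main obstacle I anticipate is not a hard computation but the bookkeeping needed to track the direct-summand data — in particular the identification of the splitting inclusion with the adjunction morphism — through the mod $p$ reduction; this requires care that $-/p=\mathrm{cofib}(p\cdot\mathrm{id})$ is functorial and additive on $D(H\mathbb{Z}^{k})$ and that the adjunction units for $\widehat{H}\mathbb{Z}^{D}$ and $\widehat{H}\mathbb{F}^{D}_{p}$ assemble into a map of cofibre sequences. If one instead wants a self-contained argument, one can run the localization triangle $i_{*}i^{!}\widehat{H}\mathbb{F}^{D}_{p}\to\widehat{H}\mathbb{F}^{D}_{p}\to j_{*}H\mathbb{F}^{K}_{p}$: applying $i^{*}$ (and $i^{*}i_{*}\cong\mathrm{id}$) together with absolute purity $i^{!}\widehat{H}\mathbb{F}^{D}_{p}\cong\Sigma^{-2,-1}H\mathbb{F}^{k}_{p}$ for the codimension-one regular closed immersion $i$ gives a triangle $\Sigma^{-2,-1}H\mathbb{F}^{k}_{p}\to H\mathbb{F}^{k}_{p}\to i^{*}j_{*}H\mathbb{F}^{K}_{p}\to\Sigma^{-1,-1}H\mathbb{F}^{k}_{p}$ whose first map is classified by an element of $\mathrm{Hom}_{D(H\mathbb{F}^{k}_{p})}(\Sigma^{-2,-1}H\mathbb{F}^{k}_{p},H\mathbb{F}^{k}_{p})=H^{2,1}(k,\mathbb{F}_{p})=0$, so the triangle splits; here the real work is justifying purity for Spitzweck's spectrum over $D$, which is why I prefer to cite \cite[Lemma 4.10]{FrankSpi}.
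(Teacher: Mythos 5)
Your proposal is correct, but note that the paper does not actually prove this statement: it is presented in the preliminaries as a recollection, with the integral splitting cited to \cite[Lemma 4.10]{FrankSpi}, and the mod $p$ case taken from the same source (where it is established by exactly the localization-triangle-plus-purity argument you sketch at the end: the triangle $i^{*}i_{*}i^{!}\widehat{H}\mathbb{F}^{D}_{p}\to i^{*}\widehat{H}\mathbb{F}^{D}_{p}\to i^{*}j_{*}j^{*}\widehat{H}\mathbb{F}^{D}_{p}$ splits because the attaching map lives in $H^{2,1}(k,\mathbb{F}_{p})=CH^{1}(\mathrm{Spec}\,k)/p=0$). Your primary route — reducing the integral splitting modulo $p$ — is a genuinely different and valid derivation: exactness of $i^{*}j_{*}j^{*}$ gives $i^{*}j_{*}H\mathbb{F}^{K}_{p}\cong(i^{*}j_{*}H\mathbb{Z}^{K})/p$, the endomorphism $p\cdot\mathrm{id}$ is central and hence diagonal for any direct-sum decomposition, and cofibres commute with finite coproducts, so the integral splitting reduces to the asserted mod $p$ one. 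What this buys is that no fresh purity input is needed; what it costs is the bookkeeping you flag, of which the only genuinely nontrivial piece is the identification of $i^{*}\eta$ for $\widehat{H}\mathbb{F}^{D}_{p}$ with the mod $p$ reduction of $i^{*}\eta$ for $\widehat{H}\mathbb{Z}^{D}$. In the triangulated setting the two maps a priori differ by a composite through the boundary map $H\mathbb{F}^{k}_{p}\to\Sigma^{1,0}H\mathbb{Z}^{k}$, but any such discrepancy lands in $\mathrm{Hom}(\Sigma^{1,0}H\mathbb{Z}^{k},H\mathbb{F}^{k}_{p}\oplus\Sigma^{-1,-1}H\mathbb{F}^{k}_{p})=H^{-1,0}(k,\mathbb{F}_{p})\oplus H^{0,1}(k,\mathbb{F}_{p})=0$ (using $\mu_{p}(k)=0$ in characteristic $p$), so the identification holds; alternatively it is automatic from the functoriality of cofibres of the natural transformation $\eta$ in the stable $\infty$-categorical framework of \cite{FrankSpi}. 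With that point settled, your argument is complete.
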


\indent Let $\eta: id. \to j_{*}j^{*}$ denote the unit map. From now on, we shall denote all adjunction morphisms $i^{*}E \to i^{*}j_{*}j^{*}E$ for $E \in SH(D)$ by $i^{*}\eta$. We will also denote all $\Sigma^{s,t}\pi, \Sigma^{s,t}\pi_{0}$ by $\pi$ and $\pi_{0}$ respectively to make the text easier to read. The morphisms $\Psi_{k}$ and $\Psi_{K}$ lift to a morphism $$\Psi_{D}:\bigoplus\limits _{\alpha} \Sigma ^{p_{\alpha},q_{\alpha}}\widehat{H}\mathbb{F}^{D}_{p} \to \widehat{H}\mathbb{F}^{D}_{p} \wedge \widehat{H}\mathbb{F}^{D}_{p}$$ in $D(\widehat{H}\mathbb{F}^{D}_{p})$ \cite[Lemma 3.10]{FrankSpi}. Applying $i^{*}\eta$ to $\Psi_{D}$, we get a commuting square
\begin{equation}              \label{PsiDcommute}
\begin{tikzcd}
\bigoplus\limits _{\alpha} \Sigma ^{p_{\alpha},q_{\alpha}}H\mathbb{F}^{k}_{p} \arrow[r, "\Psi_{k}"] \arrow[d, "i^{*}\eta"]
& H\mathbb{F}^{k}_{p}\wedge H\mathbb{F}^{k}_{p} \arrow[d, "i^{*}\eta"] \\
\bigoplus\limits _{\alpha} \Sigma ^{p_{\alpha},q_{\alpha}}i^{*}j_{*}H\mathbb{F}^{K}_{p} \arrow[r, "i^{*}j_{*}\Psi_{K}"]
& i^{*}j_{*}(H\mathbb{F}^{K}_{p}\wedge H\mathbb{F}^{K}_{p})
\end{tikzcd}
\end{equation}
in $D(H\mathbb{F}^{k}_{p})$. Let $r:H\mathbb{F}^{k}_{p}\wedge H\mathbb{F}^{k}_{p} \to \bigoplus\limits _{\alpha} \Sigma ^{p_{\alpha},q_{\alpha}}H\mathbb{F}^{k}_{p}$ be the retraction of $\Psi_{k}$ defined by the following composite \cite[Theorem 5.1]{FrankSpi}. 
\[
\begin{tikzcd}
H\mathbb{F}^{k}_{p}\wedge H\mathbb{F}^{k}_{p} \arrow[r, "i^{*}\eta"]
& i^{*}j_{*}(H\mathbb{F}^{K}_{p}\wedge H\mathbb{F}^{K}_{p}) \arrow[r, "i^{*}j_{*}\Psi^{-1}_{K}"]
& \bigoplus\limits _{\alpha} \Sigma ^{p_{\alpha},q_{\alpha}}i^{*}j_{*}H\mathbb{F}^{K}_{p} \arrow[d, "\oplus \pi"] \\
& & \bigoplus\limits _{\alpha} \Sigma ^{p_{\alpha},q_{\alpha}}H\mathbb{F}^{k}_{p}
\end{tikzcd}
\]

\indent For $S=k, K,$ or $D$ (use $\widehat{H}\mathbb{F}^{D}_{p}$), we let $\mu^{S}_{1}: H\mathbb{F}^{S}_{p} \wedge H\mathbb{F}^{S}_{p} \to H\mathbb{F}^{S}_{p}$ denote the multiplication morphism. There is also a multiplication morphism $$\mu^{S}_{2}:(H\mathbb{F}^{S}_{p} \wedge H\mathbb{F}^{S}_{p})\wedge (H\mathbb{F}^{S}_{p} \wedge H\mathbb{F}^{S}_{p}) \to H\mathbb{F}^{S}_{p} \wedge H\mathbb{F}^{S}_{p}$$ defined in the standard way by interchanging the two middle $H\mathbb{F}^{S}_{p}$ terms and then applying $\mu^{S}_{1} \wedge \mu^{S}_{1}$.

\indent For a sequence $\alpha_{0}$, we define $i^{*}\eta_{\alpha_{0}}: H\mathbb{F}^{k}_{p} \wedge H\mathbb{F}^{k}_{p} \to \Sigma ^{p_{\alpha_{0}},q_{\alpha_{0}}}H\mathbb{F}^{k}_{p}$ in $D(H\mathbb{F}^{k}_{p})$ to be the composite 
\begin{equation} \label{eq:retractexplicit r}
\begin{tikzcd}
H\mathbb{F}^{k}_{p} \wedge H\mathbb{F}^{k}_{p} \arrow[r, "i^{*}\eta"] 
& i^{*}j_{*}(H\mathbb{F}^{K}_{p} \wedge H\mathbb{F}^{K}_{p}) \arrow[r, "i^{*}j_{*}\Psi^{-1}_{K}"]
                & \bigoplus\limits _{\alpha} \Sigma ^{p_{\alpha},q_{\alpha}}i^{*}j_{*}H\mathbb{F}^{k}_{p} \arrow[r, "proj."] 
                & \Sigma^{p_{\alpha_{0}},q_{\alpha_{0}}}i^{*}j_{*}H\mathbb{F}^{K}_{p} \arrow[d, "\pi"]\\
& & &  \Sigma^{p_{\alpha_{0}},q_{\alpha_{0}}}H\mathbb{F}^{k}_{p}.
\end{tikzcd}
\end{equation}  
The morphism $i^{*}\eta_{\alpha_{0}}$ is a retract of the morphism $H\mathbb{F}^{k}_{p} \wedge \omega(\alpha_{0}):\Sigma^{p_{\alpha_{0}},q_{\alpha_{0}}}H\mathbb{F}^{k}_{p} \to H\mathbb{F}^{k}_{p} \wedge H\mathbb{F}^{k}_{p}$.

\indent From the work of Voevodsky \cite{Voe2} and Friedlander-Suslin \cite[Corollary 12.2]{FriSus}, Bloch's higher Chow groups are isomorphic to motivic cohomology as defined by Voevodsky. The isomorphism between motivic cohomology and Bloch's higher Chow groups is compatible with pullback maps and product structures \cite[Theorem 6.7]{Spi}. See also \cite{KonYas}.

\begin{theorem} \label{voesusfri}
Let $F$ be a field and let $X \in \textup{Sm}_{F}$. Then $$H^{n,i}(X, \mathbb{Z}) \cong CH^{i}(X,2i-n)$$ for all $n$ and $i \geq 0$.

\end{theorem}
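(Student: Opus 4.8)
The plan is to read the isomorphism off from the known comparison between Voevodsky's motivic cohomology and Bloch's higher Chow groups, first over a perfect field and then over an arbitrary field $F$ of positive characteristic by a purely inseparable descent argument. Recall that $H^{n,i}(X,\mathbb{Z})$ is by definition the Nisnevich (equivalently Zariski) hypercohomology $\mathbb{H}^{n}_{\mathrm{Nis}}(X,\mathbb{Z}(i))$ of the motivic complex $\mathbb{Z}(i)$, a complex of Nisnevich sheaves with transfers supported in cohomological degrees $\le i$, while $CH^{i}(X,m)$ is the $m$-th homology of Bloch's cycle complex $z^{i}(X,\bullet)$ of admissible cycles meeting all faces properly (here $CH^{i}(X,m)=0$ for $m<0$, which matches the stated range).

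First I would invoke Voevodsky's comparison theorem \cite{Voe2}: over a perfect field there is a natural quasi-isomorphism of complexes of Zariski sheaves between $\mathbb{Z}(i)$ and the shift by $[2i]$ of the sheafification of $U\mapsto z^{i}(U,\bullet)$ (regraded cohomologically). Concretely this rests on Bloch's localization theorem, which shows that $U\mapsto z^{i}(U,\bullet)$ is a homotopy-invariant complex of presheaves with transfers satisfying Nisnevich excision, so that the homology of its complex of global sections agrees with its Zariski/Nisnevich hypercohomology and is computed by the same localization machinery as $\mathbb{Z}(i)$. Passing to hypercohomology and reindexing then gives, for $F$ perfect,
\[
H^{n,i}(X,\mathbb{Z})=\mathbb{H}^{n}_{\mathrm{Zar}}(X,\mathbb{Z}(i))=\mathbb{H}^{n-2i}_{\mathrm{Zar}}\big(X,z^{i}(-,\bullet)\big)=H_{2i-n}\big(z^{i}(X,\bullet)\big)=CH^{i}(X,2i-n).
\]

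The remaining, and main, point is to remove the perfectness hypothesis, since the base field $k$ of this paper need not be perfect. Let $F^{\mathrm{perf}}=\varinjlim\,\big(F\xrightarrow{x\mapsto x^{p}}F\xrightarrow{x\mapsto x^{p}}\cdots\big)$ be the perfect closure and set $X^{\mathrm{perf}}=X\times_{F}F^{\mathrm{perf}}$. Both functors $X\mapsto H^{n,i}(X,\mathbb{Z})$ and $X\mapsto CH^{i}(X,2i-n)$ commute with cofiltered limits of smooth schemes along affine transition maps, so each group over $X^{\mathrm{perf}}$ is the filtered colimit of the corresponding groups over $X\times_{F}F_{r}$ as $F_{r}$ ranges over the finite subextensions $F\subseteq F_{r}\subseteq F^{\mathrm{perf}}$. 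Hence, using the perfect case for $F^{\mathrm{perf}}$, it suffices to know that the pullback maps $H^{n,i}(X,\mathbb{Z})\to H^{n,i}(X^{\mathrm{perf}},\mathbb{Z})$ and $CH^{i}(X,2i-n)\to CH^{i}(X^{\mathrm{perf}},2i-n)$ are isomorphisms and are compatible with the comparison map; this is exactly the invariance of motivic cohomology under purely inseparable extensions recorded by Friedlander--Suslin in \cite[Corollary 12.2]{FriSus}, together with the compatibility of the higher-Chow/motivic identification with pullback from \cite[Theorem 6.7]{Spi}. Chasing the resulting commutative square yields the isomorphism for arbitrary $F$. The only genuine obstacle is this last step: everything over perfect fields is classical, and the work lies in justifying the colimit argument and the purely inseparable invariance, both of which are supplied by the cited results.
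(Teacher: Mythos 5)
Your argument for the perfect-field case is the standard one and is fine, but the descent step to an arbitrary field $F$ of characteristic $p$ is wrong. You assert that the pullbacks $H^{n,i}(X,\mathbb{Z})\to H^{n,i}(X^{\mathrm{perf}},\mathbb{Z})$ and $CH^{i}(X,2i-n)\to CH^{i}(X^{\mathrm{perf}},2i-n)$ are isomorphisms, i.e.\ that integral motivic cohomology is invariant under purely inseparable base change. This is false. Take $X=\mathrm{Spec}(F)$ and $(n,i)=(1,1)$: then $H^{1,1}(\mathrm{Spec}(F),\mathbb{Z})\cong F^{*}$ and the pullback to the perfection is just the inclusion $F^{*}\hookrightarrow (F^{\mathrm{perf}})^{*}$, which is a proper inclusion whenever $F$ is imperfect. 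The transfer argument only gives that the kernel and cokernel of pullback along a degree-$p^{m}$ purely inseparable extension are $p^{m}$-torsion, so invariance holds with $\mathbb{Z}[1/p]$ coefficients but not integrally. The attribution of this (false) invariance to \cite[Corollary 12.2]{FriSus} is also off: that corollary is the comparison between motivic cohomology and higher Chow groups over a suitable field, not a purely inseparable descent statement. Note also that even granting your colimit identity $H^{n,i}(X^{\mathrm{perf}})=\varinjlim_{r} H^{n,i}(X_{F_{r}})$, a colimit of comparison maps being an isomorphism does not by itself make each $H^{n,i}(X_{F_{r}})\to CH^{i}(X_{F_{r}},2i-n)$ an isomorphism, so the colimit step alone does not close the gap.

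The continuity idea you invoke can be made to work, but running in the opposite direction: instead of passing up from $F$ to $F^{\mathrm{perf}}$, present $X\in\textup{Sm}_{F}$ as a cofiltered limit, along affine transition maps, of schemes $X_{\lambda}$ that are smooth over the \emph{prime} field $\mathbb{F}_{p}$ (every finitely generated subfield of $F$ is the function field of a smooth $\mathbb{F}_{p}$-variety, since $\mathbb{F}_{p}$ is perfect). Both sides commute with such filtered colimits and the comparison map is natural, so the isomorphism over $\mathbb{F}_{p}$ propagates to $X$. In any case this is not the route the paper takes: the paper simply cites Voevodsky's note \cite{Voe2}, whose cdh/localization argument removes the resolution-of-singularities hypothesis of \cite{FriSus} and yields the comparison over an arbitrary field directly, together with \cite[Theorem 6.7]{Spi} for compatibility with pullbacks and products.
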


\indent Let $n,i \geq 0$ such that $ n>2i$. From the above theorem, we get that $H^{n,i}(X,A)=0$ for any coefficient ring $A$ and $X \in \textup{Sm}_{F}$. 
\section{Definition of operations} \label{sec:def}
\indent In this section, we use the results of Frankland and Spitzweck in \cite{FrankSpi} to define new Steenrod operations $P^{n}_{k}$ for $n\geq 0$. Let $$i_{L}, i_{R}: H\mathbb{F}^{S}_{p} \to H\mathbb{F}^{S}_{p} \wedge H\mathbb{F}^{S}_{p}$$ denote the left and right $H\mathbb{F}_{p}^{S}$-module maps respectively for $S=D$ (use $\widehat{H}\mathbb{F}^{D}_{p}$), $k$, or $K$. Motivated by the corresponding duality in characteristic $0$, we want to define operations $P^{n}_{k}\in H\mathbb{F}^{k \, *,*}_{p} H\mathbb{F}^{k}_{p}$ for $n \geq 0$ by taking operations dual to the $\xi^{n}_{1}$. 

\begin{definition} 
Let $\alpha$ be a sequence. Define $P^{\alpha}_{k}\in H\mathbb{F}^{k \, *,*}_{p} H\mathbb{F}^{k}_{p}$ by $P^{\alpha}_{k}\coloneqq i^{*}\eta_{\alpha} \circ i_{R}$. For $n\geq 0$, we let $P^{n}_{k}=P^{(0,n,0,\ldots)}_{k}$. Let $\beta_{k}=P^{(1,0,\ldots)}_{k}$. 
\end{definition}

\indent There are corresponding operations $P^{\alpha}_{K}$  in characteristic $0$ defined from \ref{eq:char0dual} by 
\[
\begin{tikzcd}
H\mathbb{F}^{K}_{p} \arrow[r, "i_{R}"] & H\mathbb{F}^{K}_{p}\wedge H\mathbb{F}^{K}_{p} \arrow[r, "proj."] & \Sigma^{p_{\alpha},q_{\alpha}}H\mathbb{F}^{K}_{p}.
\end{tikzcd}
\]

\begin{definition}
 To define a homomorphism $\Phi:H\mathbb{F}^{K\,  *,*}_{p} H\mathbb{F}^{K}_{p} \to H\mathbb{F}^{k\,  *,*}_{p} H\mathbb{F}^{k}_{p}$ of graded additive groups, let $f: H\mathbb{F}^{K}_{p} \to \Sigma ^{k,l}H\mathbb{F}^{K}_{p}$ be given. Define $\Phi(f): H\mathbb{F}^{k}_{p} \to \Sigma ^{k,l}H\mathbb{F}^{k}_{p}$ by $\Phi(f)=\pi \circ i^{*}j_{*}(f) \circ i^{*}\eta$.
\begin{equation} \label{eq:phi def}
\begin{tikzcd}
H\mathbb{F}^{k}_{p} \arrow[r, "i^{*}\eta"] 
          & i^{*}j_{*}H\mathbb{F}^{K}_{p} \arrow[r, "i^{*}j_{*}(f)"] 
               & \Sigma^{k,l}i^{*}j_{*}H\mathbb{F}^{K}_{p} \arrow[r, "\pi"] 
                     & \Sigma^{k,l}H\mathbb{F}^{k}_{p} 
\end{tikzcd}
\end{equation}
\end{definition}

\indent From the definition of $\Phi$, it is clear that $\Phi(id.)=id.$. The following lemma will be important for proving that the operations $P^{n}_{k}$ restricted to mod $p$ Chow groups satisfy the Adem relations and Cartan formula.

\begin{lemma}  \label{lemmafornexthm}

Let $X \in \textup{Sm}_{k}$ and let $f: \Sigma^{\infty}_{+}X \to \Sigma^{2m,m}H\mathbb{F}_{p}^{k}$ be given.

\begin{enumerate}
\item Let $\alpha_{0}$ be a sequence. Consider the morphism $$g_{\alpha_{0}}:H\mathbb{F}^{k}_{p} \to \Sigma^{p_{\alpha_{0}}-1,q_{\alpha_{0}}-1}H\mathbb{F}^{k}_{p}$$  given by the following composite. 
\[
\begin{tikzcd}
H\mathbb{F}^{k}_{p} \arrow[r, "i^{*}\eta"]
& i^{*}j_{*}H\mathbb{F}^{K}_{p} \arrow[r, "i^{*}j_{*}(P^{\alpha_{0}}_{K})"]
& i^{*}j_{*}\Sigma^{p_{\alpha_{0}},q_{\alpha_{0}}}H\mathbb{F}^{K}_{p} \arrow[r, "\pi_{0}"]
& \Sigma^{p_{\alpha_{0}}-1,q_{\alpha_{0}}-1}H\mathbb{F}^{k}_{p}.
\end{tikzcd}
\]
Then $\Sigma^{2m,m}g_{\alpha_{0}} \circ f=0$.

\item The composite 

\[
\begin{tikzcd}
\Sigma^{\infty}_{+}X \arrow[r, "f"] & \Sigma^{2m,m}H\mathbb{F}_{p}^{k} \arrow[r, "i_{R}"] & \Sigma^{2m,m}H\mathbb{F}_{p}^{k} \wedge H\mathbb{F}_{p}^{k} \arrow[r, "i^{*}\eta"] & i^{*}j_{*}(\Sigma^{2m,m}H\mathbb{F}_{p}^{K} \wedge H\mathbb{F}_{p}^{K}) \arrow[d, "i^{*}j_{*}\Psi_{K}^{-1}"] \\
& & & \bigoplus\limits _{\alpha} \Sigma ^{p_{\alpha}+2m,q_{\alpha}+m}i^{*}j_{*}H\mathbb{F}^{K}_{p} \arrow[d,"\oplus \pi_{0}"] \\
& & & \bigoplus\limits _{\alpha} \Sigma ^{2m+p_{\alpha}-1,m+q_{\alpha}-1}H\mathbb{F}^{k}_{p}
\end{tikzcd}
\]
is equal to $0$.

\end{enumerate}
\end{lemma}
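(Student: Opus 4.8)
The plan is to deduce both statements from the key vanishing fact that for $X \in \textup{Sm}_k$ and a class in $H^{2m,m}(X,\mathbb{F}_p) = CH^m(X)/p$, any ``fractional weight'' component produced by the splitting must vanish for degree reasons coming from Theorem \ref{voesusfri}. Concretely, I would first reduce (2) to (1): the composite in (2) is, in each $\alpha$-summand, obtained by post-composing $i_R$ with the morphism $H\mathbb{F}^k_p \to i^*j_*H\mathbb{F}^K_p \to i^*j_*\Sigma^{p_\alpha,q_\alpha}H\mathbb{F}^K_p \to \Sigma^{p_\alpha-1,q_\alpha-1}H\mathbb{F}^k_p$, and one checks using the definition of $P^\alpha_K$ (as $\mathrm{proj.} \circ i_R$ from \ref{eq:char0dual}) together with the compatibility square \eqref{PsiDcommute} — more precisely its analogue with $i_R$ replacing the $\mathbb{F}^k_p$-module structure map — that this summand-wise morphism is exactly $g_\alpha$ from part (1). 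So (2) is just the assembly of the statements $\Sigma^{2m,m}g_\alpha \circ f = 0$ over all $\alpha$, and it suffices to prove (1).

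For (1), the idea is that $g_{\alpha_0}$ factors through $\pi_0$, which lands in $\Sigma^{-1,-1}H\mathbb{F}^k_p$, i.e.\ it is the ``off-diagonal'' part of the splitting $i^*j_*H\mathbb{F}^K_p \cong H\mathbb{F}^k_p \oplus \Sigma^{-1,-1}H\mathbb{F}^k_p$ of Theorem \ref{T:defofpi0andpi}. I would evaluate $\Sigma^{2m,m}g_{\alpha_0} \circ f$ on $\Sigma^\infty_+ X$ and track bidegrees: $f$ has bidegree $(2m,m)$, $P^{\alpha_0}_K$ shifts by $(p_{\alpha_0},q_{\alpha_0})$, and $\pi_0$ shifts by $(-1,-1)$, so the target of the composite evaluated on $X$ is $H^{2m+p_{\alpha_0}-1,\,m+q_{\alpha_0}-1}(X,\mathbb{F}_p)$. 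The crucial numerical input is that $p_{\alpha_0} = 2 q_{\alpha_0} + \epsilon$ where $\epsilon \in \{0,1\}$ counts the total $\tau$-degree of the monomial $\omega(\alpha_0)$ — indeed each $\tau_i$ contributes $(2p^i-1, p^i-1)$ and each $\xi_j$ contributes $(2p^j-2, p^j-1)$, so $p_{\alpha_0} - 2q_{\alpha_0} = \sum_i \epsilon_i \geq 0$. Hence $2m + p_{\alpha_0} - 1 - 2(m + q_{\alpha_0} - 1) = p_{\alpha_0} - 2q_{\alpha_0} + 1 = \sum_i \epsilon_i + 1 \geq 1 > 0$, so by the remark following Theorem \ref{voesusfri} the group $H^{2m+p_{\alpha_0}-1,\,m+q_{\alpha_0}-1}(X,\mathbb{F}_p)$ vanishes. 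Therefore $\Sigma^{2m,m}g_{\alpha_0}\circ f$, viewed as an element of this cohomology group via Yoneda/representability of $H\mathbb{F}^k_p$, is $0$.

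There is one subtlety to be careful about: $f$ a priori only lives in $SH(k)$ (or as a map of spectra), while $g_{\alpha_0}$ is a morphism in $D(H\mathbb{F}^k_p)$; I would note that $g_{\alpha_0}$ can equally be regarded as a morphism of spectra via the forgetful functor, and that precomposition with $f$ and the identification $[\Sigma^\infty_+ X, \Sigma^{a,b}H\mathbb{F}^k_p] \cong H^{a,b}(X,\mathbb{F}_p)$ is the standard one, so the degree-vanishing argument applies verbatim. The main obstacle — really the only nonroutine point — is getting the reduction of (2) to (1) exactly right: one must verify that the summand-wise description of the big composite in (2) genuinely matches $g_\alpha$, which hinges on the commuting square \eqref{PsiDcommute} (and its compatibility with the right module map $i_R$, since $P^\alpha_K$ is defined using $i_R$) together with the fact that $\pi_0 \circ i^*\eta = 0$, so that in the composite for (2) the ``$\pi$-part'' of $i^*j_*\Psi_K^{-1}$ contributes nothing beyond what $g_\alpha$ already records. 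Once that bookkeeping is done, everything else is the bidegree count above.
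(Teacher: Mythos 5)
Your proof is correct and rests on exactly the same key idea as the paper's: both composites land, after evaluation on $\Sigma^\infty_+ X$, in groups of the form $H^{2m+p_\alpha-1,\,m+q_\alpha-1}(X,\mathbb{F}_p)$, which vanish by Theorem \ref{voesusfri} since $p_\alpha \geq 2q_\alpha$ forces $2m+p_\alpha-1 > 2(m+q_\alpha-1)$; your computation $p_\alpha - 2q_\alpha = \sum_i \epsilon_i \geq 0$ is precisely that observation.

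However, the reduction of (2) to (1) that you treat as ``really the only nonroutine point'' is an unnecessary detour, and the paper does not make it. Since $\Sigma^\infty_+ X$ is compact (as already exploited in Proposition \ref{P:coactidentitensor}), a morphism from it into a coproduct vanishes if and only if each of its components does, and each component of the composite in (2) lands in $\Sigma^{2m+p_\alpha-1,m+q_\alpha-1}H\mathbb{F}^k_p$, so the same one-line degree argument disposes of (2) directly without any identification of the $\alpha$-component with $g_\alpha$. Your reduction is in fact valid (it follows from naturality of $i^*\eta$ applied to the $D$-level $i_R$ together with the definition $P^{\alpha_0}_K = \mathrm{proj.}_{\alpha_0}\circ i_R$), but the justification you give — invoking $\pi_0 \circ i^*\eta = 0$ to kill a ``$\pi$-part'' of $i^*j_*\Psi_K^{-1}$ — is confused: $i^*j_*\Psi_K^{-1}$ is an isomorphism of left $H\mathbb{F}^k_p$-modules and has no ``$\pi$-part''; the identity $\pi_0 \circ i^*\eta = 0$ plays no role here. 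Dropping the reduction entirely recovers the paper's shorter argument.
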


\begin{proof}
\indent Note that for any sequence $\alpha$ of bidegree $(p_{\alpha},q_{\alpha})$, $p_{\alpha} \geq 2q_{\alpha}$ which implies that $p_{\alpha}-1> 2(q_{\alpha}-1)$. For $(1)$ and $(2)$, Theorem \ref{voesusfri} implies that $$\textup{Hom}_{SH(k)}(\Sigma^{\infty}_{+}X, \Sigma^{2m+p_{\alpha}-1, m+q_{\alpha}-1}H\mathbb{F}_{p}^{k})=H^{2m+p_{\alpha}-1, m+q_{\alpha}-1}(X, \mathbb{F}_{p})=0$$ for any sequence $\alpha$.

\end{proof}

\begin{theorem} \phantomsection   \label{T:phiproperties}

\begin{enumerate} 
\item We have $\Phi(H^{*,*}(K, \mathbb{F}_{p}))\subset H^{*,*}(k, \mathbb{F}_{p}).$
\item Let $\alpha$ be a sequence. Then $\Phi(P^{\alpha}_{K})=P^{\alpha}_{k}$.
In particular, for the Bockstein $\beta_{K}$ and reduced power operations $P^{n}_{K}$ constructed by Voevodsky in characteristic $0$, $\Phi(P^{n}_{K})=P^{n}_{k}$ for $n \geq 0$ and $\Phi(\beta_{K})=\beta_{k}$. Also, $P^{0}_{k}$ is the identity since $P^{0}_{K}$ is the identity.
\item  Let $X \in \textup{Sm}_{k}$ and let $f: \Sigma^{\infty}_{+}X \to \Sigma^{2m,m}H\mathbb{F}_{p}^{k}$ be given. Let $\alpha$ be a sequence and let $h:H\mathbb{F}_{p}^{K} \to \Sigma^{i,j}H\mathbb{F}_{p}^{K}$ be given. Then $$\Phi(h\circ P^{\alpha}_{K})(f)=\Phi(h)(P^{\alpha}_{k}(f)).$$
\end{enumerate}

\end{theorem}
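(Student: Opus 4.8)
I would establish the three parts in the order (2), (1), (3): part (2) is the crux, (1) is a short module-theoretic observation, and (3) follows formally from (2) and Lemma \ref{lemmafornexthm}(1).

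For (2): unwinding the definition of $\Phi$ and of $i^{*}\eta_{\alpha}$ from \eqref{eq:retractexplicit r}, both $\Phi(P^{\alpha}_{K})$ and $P^{\alpha}_{k}=i^{*}\eta_{\alpha}\circ i_{R}$ are composites of the form $\pi\circ\mathrm{proj}_{\alpha}\circ(i^{*}j_{*}\Psi_{K}^{-1})\circ(-)$, where the tail $(-)\colon H\mathbb{F}^{k}_{p}\to i^{*}j_{*}(H\mathbb{F}^{K}_{p}\wedge H\mathbb{F}^{K}_{p})$ is $i^{*}j_{*}(i_{R})\circ i^{*}\eta$ in the first case and $i^{*}\eta\circ i_{R}$ (with $i_{R}$ over $k$) in the second. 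So it suffices to check that the square with top edge $i_{R}\colon H\mathbb{F}^{k}_{p}\to H\mathbb{F}^{k}_{p}\wedge H\mathbb{F}^{k}_{p}$, bottom edge $i^{*}j_{*}(i_{R})\colon i^{*}j_{*}H\mathbb{F}^{K}_{p}\to i^{*}j_{*}(H\mathbb{F}^{K}_{p}\wedge H\mathbb{F}^{K}_{p})$, and both vertical edges $i^{*}\eta$ commutes. Since $\widehat{H}\mathbb{Z}^{D}$ is Cartesian, so are $\widehat{H}\mathbb{F}^{D}_{p}$ and $\widehat{H}\mathbb{F}^{D}_{p}\wedge\widehat{H}\mathbb{F}^{D}_{p}$, and the right unit maps over $k$ and over $K$ are the pullbacks $i^{*}$ and $j^{*}$ of the right unit $i_{R}\colon\widehat{H}\mathbb{F}^{D}_{p}\to\widehat{H}\mathbb{F}^{D}_{p}\wedge\widehat{H}\mathbb{F}^{D}_{p}$ in $SH(D)$; hence this square is exactly $i^{*}$ applied to the naturality square of the unit $\eta\colon\mathrm{id}\to j_{*}j^{*}$ at the morphism $i_{R}$ in $SH(D)$, so it commutes. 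Thus $\Phi(P^{\alpha}_{K})=P^{\alpha}_{k}$. Specializing $\alpha=(0,n,0,\ldots)$ and $\alpha=(1,0,\ldots)$, and using that Voevodsky's $P^{n}_{K}$ and $\beta_{K}$ are the dual-basis operations $P^{(0,n,0,\ldots)}_{K}$ and $P^{(1,0,\ldots)}_{K}$, gives $\Phi(P^{n}_{K})=P^{n}_{k}$ and $\Phi(\beta_{K})=\beta_{k}$; and $P^{0}_{K}$ is the identity (it equals $\mu^{K}_{1}\circ i_{R}$, since the biproduct projection onto the $\alpha=(0,0,\ldots)$ summand of $\Psi_{K}$ agrees with $\mu^{K}_{1}$ by vanishing of negative-weight motivic cohomology of $K$), so $P^{0}_{k}=\Phi(\mathrm{id})=\mathrm{id}$.

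For (1): the subgroup $H^{*,*}(K,\mathbb{F}_{p})\subset H\mathbb{F}^{K\,*,*}_{p}H\mathbb{F}^{K}_{p}$ consists precisely of the left $H\mathbb{F}^{K}_{p}$-module self-maps of $H\mathbb{F}^{K}_{p}$ (acting by multiplication), because $[H\mathbb{F}^{K}_{p},\Sigma^{a,b}H\mathbb{F}^{K}_{p}]_{D(H\mathbb{F}^{K}_{p})}=\pi_{-a,-b}H\mathbb{F}^{K}_{p}=H^{a,b}(K,\mathbb{F}_{p})$, and likewise over $k$. If $f$ is such a module map, then $i^{*}j_{*}(f)$ is a map of left $i^{*}j_{*}H\mathbb{F}^{K}_{p}$-modules (the lax monoidal $j_{*}$ and the strong monoidal $i^{*}$ both preserve module structures), hence a map of left $H\mathbb{F}^{k}_{p}$-modules by restriction of scalars along the ring map $i^{*}\eta$; since $i^{*}\eta$ and $\pi$ are themselves $H\mathbb{F}^{k}_{p}$-linear by Theorem \ref{T:defofpi0andpi}, the composite $\Phi(f)=\pi\circ i^{*}j_{*}(f)\circ i^{*}\eta$ is $H\mathbb{F}^{k}_{p}$-linear, i.e. lies in $H^{*,*}(k,\mathbb{F}_{p})$.

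For (3): by part (2) the $\pi$-component of $i^{*}j_{*}(P^{\alpha}_{K})\circ i^{*}\eta\colon H\mathbb{F}^{k}_{p}\to\Sigma^{p_{\alpha},q_{\alpha}}i^{*}j_{*}H\mathbb{F}^{K}_{p}$ is $P^{\alpha}_{k}$, and by the very definition of $g_{\alpha}$ in Lemma \ref{lemmafornexthm}(1) its $\pi_{0}$-component is $g_{\alpha}$. As the splitting of Theorem \ref{T:defofpi0andpi} is a biproduct whose inclusion of the $H\mathbb{F}^{k}_{p}$-summand is $i^{*}\eta$, we get $i^{*}j_{*}(P^{\alpha}_{K})\circ i^{*}\eta=(i^{*}\eta)\circ P^{\alpha}_{k}+\iota_{0}\circ g_{\alpha}$, with $\iota_{0}$ the inclusion of the complementary $\Sigma^{p_{\alpha}-1,q_{\alpha}-1}H\mathbb{F}^{k}_{p}$-summand. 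Suspending by $\Sigma^{2m,m}$ and precomposing with $f$, the term $\iota_{0}\circ g_{\alpha}$ contributes $\Sigma^{2m,m}\iota_{0}\circ(\Sigma^{2m,m}g_{\alpha}\circ f)=0$ by Lemma \ref{lemmafornexthm}(1), so $\Sigma^{2m,m}(i^{*}j_{*}(P^{\alpha}_{K})\circ i^{*}\eta)\circ f=\Sigma^{2m+p_{\alpha},m+q_{\alpha}}(i^{*}\eta)\circ P^{\alpha}_{k}(f)$. Substituting into $\Phi(h\circ P^{\alpha}_{K})=\pi\circ i^{*}j_{*}(h)\circ(i^{*}j_{*}(P^{\alpha}_{K})\circ i^{*}\eta)$ and using that $i^{*}$ and $j_{*}$ commute with all $\Sigma^{s,t}$ to recognize $\pi\circ i^{*}j_{*}(h)\circ i^{*}\eta=\Phi(h)$, one obtains $\Phi(h\circ P^{\alpha}_{K})(f)=\Phi(h)(P^{\alpha}_{k}(f))$. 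The only genuine obstacle is the bookkeeping behind part (2): one must set up the pullback identifications — Cartesianness of $\widehat{H}\mathbb{Z}^{D}$ and hence of $\widehat{H}\mathbb{F}^{D}_{p}$ and $\widehat{H}\mathbb{F}^{D}_{p}\wedge\widehat{H}\mathbb{F}^{D}_{p}$, compatibility of the module maps $i_{L}$, $i_{R}$ with $i^{*}$ and $j^{*}$, and the commutation of $i^{*}j_{*}$ with $\wedge$, $\oplus$ and the suspensions $\Sigma^{s,t}$ — carefully enough that the square in (2) is literally $i^{*}$ of a naturality square for $\eta$ over $D$; once that is in place, (1) and (3) are formal.
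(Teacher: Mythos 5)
Your proof is correct and follows essentially the same route as the paper: part (2) is established by applying $i^{*}$ to the naturality square of $\eta:\mathrm{id}\to j_{*}j^{*}$ at $i_{R}$ over $D$ and combining it with the defining diagram for $i^{*}\eta_{\alpha}$; part (1) is the observation that $i^{*}j_{*}$ preserves module maps so $\Phi(f)$ lands in $D(H\mathbb{F}^{k}_{p})$; and part (3) decomposes $i^{*}j_{*}(P^{\alpha}_{K})\circ i^{*}\eta$ along the splitting of Theorem~\ref{T:defofpi0andpi} and kills the $\pi_{0}$-component after composing with $f$ via Lemma~\ref{lemmafornexthm}. The minor extra justification you give of why $P^{0}_{K}=\mathrm{id}$ is a harmless addendum to the paper's argument.
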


\begin{proof}
\indent We first prove $(1)$. Let $a\in H^{*,*}(K, \mathbb{F}_{p})$. The element $a$ corresponds to a morphism $f_{a}: H\mathbb{F}^{K}_{p} \to\Sigma^{m,n}H\mathbb{F}^{K}_{p}$ in $D(H\mathbb{F}^{K}_{p})$. The functors $i^{*}$, $j_{*}$ restrict to functors $i^{*}:D(\widehat{H}\mathbb{F}^{D}_{p})\to D(H\mathbb{F}^{k}_{p})$ and $j_{*}:D(H\mathbb{F}^{K}_{p})\to D(\widehat{H}\mathbb{F}^{D}_{p})$. Hence, $i^{*}j_{*}(f_{a})$ is a morphism in $D(H\mathbb{F}^{k}_{p})$. From the definition of $\Phi$, it follows that $\Phi(f_{a})$ is a morphism in $D(H\mathbb{F}^{k}_{p}).$ Thus, $\Phi(a) \coloneqq \Phi(f_{a})\in H^{*,*}(k, \mathbb{F}_{p}).$

\indent We now prove $(2)$. Let $\alpha$ be a sequence. Applying the natural transformation $i^{*} \to i^{*}j_{*}j^{*}$ to $i_{R}: \widehat{H}\mathbb{F}^{D}_{p} \to \widehat{H}\mathbb{F}^{D}_{p} \wedge \widehat{H}\mathbb{F}^{D}_{p}$, we obtain the following commuting square in $SH(k)$. 
\[
\begin{tikzcd}
H\mathbb{F}^{k}_{p} \arrow[r, "i_{R}"] \arrow[d, "i^{*}\eta"]
          & H\mathbb{F}^{k}_{p}\wedge H\mathbb{F}^{k}_{p} \arrow[d, "i^{*}\eta"] \\
i^{*}j_{*}H\mathbb{F}^{K}_{p} \arrow[r, "i^{*}j_{*}(i_{R})"] 
          &  i^{*}j_{*}(H\mathbb{F}^{K}_{p}\wedge H\mathbb{F}^{K}_{p})
\end{tikzcd}
\]
From the definition of $i^{*}\eta_{\alpha}$ \ref{eq:retractexplicit r}, the following diagram commutes.
\[
\begin{tikzcd}
H\mathbb{F}^{k}_{p}\wedge H\mathbb{F}^{k}_{p} \arrow[r, "i^{*}\eta_{\alpha}"] \arrow[d, "i^{*}\eta"]
          & \Sigma^{p_{\alpha},q_{\alpha}}H\mathbb{F}^{k}_{p} \arrow[r, "id."] 
          &  \Sigma^{p_{\alpha},q_{\alpha}}H\mathbb{F}^{k}_{p} \arrow[d, "id."] \\
i^{*}j_{*}(H\mathbb{F}^{K}_{p}\wedge H\mathbb{F}^{K}_{p}) \arrow[r, "proj."] 
          &  i^{*}j_{*}\Sigma^{p_{\alpha},q_{\alpha}}H\mathbb{F}^{K}_{p} \arrow[r, "\pi"]
          &  \Sigma^{p_{\alpha},q_{\alpha}}H\mathbb{F}^{k}_{p}
\end{tikzcd}
\]

\indent Putting these 2 diagrams together, we get the following commuting diagram.

\begin{equation} \label{eq:use for next ref}
\begin{tikzcd}
H\mathbb{F}^{k}_{p} \arrow[r, "i_{R}"] \arrow[d, "i^{*}\eta"]
          & H\mathbb{F}^{k}_{p}\wedge H\mathbb{F}^{k}_{p} \arrow[r, "i^{*}\eta_{\alpha}"] \arrow[d, "i^{*}\eta"] 
          & \Sigma^{p_{\alpha},q_{\alpha}}H\mathbb{F}^{k}_{p} \arrow[r, "id."] 
          &  \Sigma^{p_{\alpha},q_{\alpha}}H\mathbb{F}^{k}_{p} \arrow[d, "id."] \\
i^{*}j_{*}H\mathbb{F}^{K}_{p} \arrow[r, "i^{*}j_{*}(i_{R})"] 
          &  i^{*}j_{*}(H\mathbb{F}^{K}_{p}\wedge H\mathbb{F}^{K}_{p}) \arrow[r, "proj."]
          &  i^{*}j_{*}\Sigma^{p_{\alpha},q_{\alpha}}H\mathbb{F}^{K}_{p} \arrow[r, "\pi"]
          &  \Sigma^{p_{\alpha},q_{\alpha}}H\mathbb{F}^{k}_{p}
\end{tikzcd}
\end{equation}                  
The top row of this diagram gives $P^{\alpha}_{k}$ while the composite starting at $H\mathbb{F}^{k}_{p}$ in the top left and continuing along the bottom row gives $\Phi(P^{\alpha}_{K})$. Hence, $\Phi(P^{\alpha}_{K})=P^{\alpha}_{k}$.

\indent Now, we prove $(3)$. Consider the following diagram.
\begin{equation}\label{eq:phi commut diag}
\begin{tikzcd}            
\Sigma^{\infty}_{+}X \arrow[d, "f"] \\ 
 \Sigma^{2m,m}H\mathbb{F}^{k}_{p} \arrow[r, "P^{\alpha}_{k}"] \arrow[d, "i^{*}\eta"]
&\Sigma^{2m+p_{\alpha},m+q_{\alpha}}H\mathbb{F}^{k}_{p} \arrow[r, "\Phi(h)"] \arrow[d, "i^{*}\eta"]
& \Sigma^{i+2m+p_{\alpha},j+m+q_{\alpha}}H\mathbb{F}^{k}_{p}  \arrow[d, "i^{*}\eta"] \\
 i^{*}j_{*}\Sigma^{2m,m}H\mathbb{F}^{K}_{p} \arrow[r, "i^{*}j_{*}P^{\alpha}_{K}"]             
& i^{*}j_{*}\Sigma^{2m+p_{\alpha},m+q_{\alpha}}H\mathbb{F}^{K}_{p} \arrow[r, "i^{*}j_{*}h"]
& i^{*}j_{*}\Sigma^{i+2m+p_{\alpha},j+m+q_{\alpha}}H\mathbb{F}^{K}_{p} \arrow[d, "\pi"]\\
& & \Sigma^{i+2m+p_{\alpha},j+m+q_{\alpha}}H\mathbb{F}^{k}_{p}.
\end{tikzcd}
\end{equation}

As $\Phi(P^{\alpha}_{K})=P^{\alpha}_{k},$ Lemma \ref{lemmafornexthm} implies that the composite $$i^{*}\eta \circ P^{\alpha}_{k} \circ f:\Sigma^{\infty}_{+}X \to i^{*}j_{*}\Sigma^{2m+p_{\alpha},m+q_{\alpha}}H\mathbb{F}^{K}_{p}$$ in diagram \ref{eq:phi commut diag} is equal to $$i^{*}j_{*}P^{\alpha}_{K} \circ i^{*}\eta \circ f.$$ Equivalently, $$\pi_{0} \circ i^{*}j_{*}P^{\alpha}_{K} \circ i^{*}\eta \circ f=0:\Sigma^{\infty}_{+}X \to \Sigma^{2m+p_{\alpha}-1,m+q_{\alpha}-1}H\mathbb{F}^{k}_{p} .$$ 

 Thus, from diagram \ref{eq:phi commut diag}, $$\Phi(h)(P^{\alpha}_{k}(f))=\pi \circ i^{*}\eta \circ \Phi(h) \circ P^{\alpha}_{k} \circ f=\pi \circ i^{*}j_{*}(h)\circ i^{*}j_{*}(P^{\alpha}_{K}) \circ i^{*}\eta \circ f= \Phi(h\circ P^{\alpha}_{K})(f)$$ as desired.

\end{proof}
 
\indent We next prove that the operations $P^{n}_{k}$ commute with base change of the field $k$ on mod $p$ Chow groups. For a morphism of fields $f:\textup{Spec}(F_{1}) \to \textup{Spec}(F_{2})$, the pullback functor $f^{*}:SH(F_{2}) \to SH(F_{1})$ induces a homomorphism $H\mathbb{F}^{F_{2} \, *,*}_{p} H\mathbb{F}^{F_{2}}_{p} \to H\mathbb{F}^{F_{1} \, *,*}_{p} H\mathbb{F}^{F_{1}}_{p}$. For $\textup{char}(F_{2}) \neq p$, $f^{*}(P^{n}_{F_{2}})=P^{n}_{F_{1}}$ since the dual Steenrod algebra has the expected form in this case \cite[Theorem 1.1]{HKO}. However, for our situation where the base field is of characteristic $p$, we do not yet know the full structure of the dual Steenrod algebra.

\indent Let $f_{1}:\textup{Spec}(k) \to \textup{Spec}(\mathbb{F}_{p})$ be the structure map. In the following commuting diagram, $f_{2}$, $f_{3}$, $i_{0}$, and $j_{0}$ are maps compatible with $f_{1}$.
\[
\begin{tikzcd}
\textup{Spec}(k) \arrow[r, "f_{1}"] \arrow[d, "i"] & \textup{Spec}(\mathbb{F}_{p}) \arrow[d, "i_{0}"] \\
\textup{Spec}(D) \arrow[r, "f_{2}"] & \textup{Spec}(\mathbb{Z}_{p}) \\
\textup{Spec}(K) \arrow[r, "f_{3}"] \arrow[u, "j"] & \textup{Spec}(\mathbb{Q}_{p}) \arrow[u, "j_{0}"]
\end{tikzcd}
\]

\begin{proposition} \label{P:pullback}
Let $X \in \textup{Sm}_{k}$ and let $g: \Sigma^{\infty}_{+}X \to \Sigma^{2m,m}H\mathbb{F}_{p}^{k}$ be given. Then $P^{n}_{k}(g)=f_{1}^{*}(P^{n}_{\mathbb{F}_{p}})(g)$ for all $n \geq 0$.
\end{proposition}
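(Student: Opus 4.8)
The plan is to unwind both $P^{n}_{k}$ and $f_{1}^{*}(P^{n}_{\mathbb{F}_{p}})$ through the displayed cube of base schemes, to observe that $f_{1}^{*}$ carries each factor in the definition of $P^{n}_{\mathbb{F}_{p}}$ to the corresponding factor of $P^{n}_{k}$ --- with one unavoidable ambiguity in the retraction $\pi$ --- and then to use Lemma \ref{lemmafornexthm} to kill that ambiguity upon evaluating on a class coming from a smooth scheme. Writing $\alpha_{0}=(0,n,0,\ldots)$, the formula \ref{eq:retractexplicit r} together with the definition $P^{n}_{k}=i^{*}\eta_{\alpha_{0}}\circ i_{R}$ expands to a composite of morphisms in $SH(k)$,
\[ P^{n}_{k}=\pi\circ\mathrm{pr}_{\alpha_{0}}\circ i^{*}j_{*}\Psi_{K}^{-1}\circ i^{*}\eta\circ i_{R}, \]
where $\mathrm{pr}_{\alpha_{0}}$ is the projection onto the $\alpha_{0}$-summand; likewise $P^{n}_{\mathbb{F}_{p}}=\pi\circ\mathrm{pr}_{\alpha_{0}}\circ i_{0}^{*}j_{0*}\Psi_{\mathbb{Q}_{p}}^{-1}\circ i_{0}^{*}\eta\circ i_{R}$ over $\mathbb{Z}_{p}$. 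Applying $f_{1}^{*}$ to the second composite and matching factors expresses $f_{1}^{*}(P^{n}_{\mathbb{F}_{p}})$ in terms of the data over $D$.

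First I would record the factorwise compatibilities. As $\widehat{H}\mathbb{Z}$ is Cartesian, $f_{1}^{*}H\mathbb{F}_{p}^{\mathbb{F}_{p}}\cong H\mathbb{F}_{p}^{k}$, $f_{2}^{*}\widehat{H}\mathbb{F}_{p}^{\mathbb{Z}_{p}}\cong\widehat{H}\mathbb{F}_{p}^{D}$, $f_{3}^{*}H\mathbb{F}_{p}^{\mathbb{Q}_{p}}\cong H\mathbb{F}_{p}^{K}$, and since $f_{1}^{*}$ is strongly monoidal also $f_{1}^{*}(H\mathbb{F}_{p}^{\mathbb{F}_{p}}\wedge H\mathbb{F}_{p}^{\mathbb{F}_{p}})\cong H\mathbb{F}_{p}^{k}\wedge H\mathbb{F}_{p}^{k}$ and $f_{1}^{*}(i_{R})=i_{R}$. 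The object $i_{0}^{*}j_{0*}H\mathbb{F}_{p}^{\mathbb{Q}_{p}}$, the map $i_{0}^{*}\eta$, and the projection $\pi_{0}$ all arise from applying $i_{0}^{*}$ to the localization triangle $i_{0*}i_{0}^{!}(-)\to\mathrm{id}\to j_{0*}j_{0}^{*}(-)$ of the open--closed decomposition $\mathrm{Spec}(\mathbb{Q}_{p})\hookrightarrow\mathrm{Spec}(\mathbb{Z}_{p})\hookleftarrow\mathrm{Spec}(\mathbb{F}_{p})$ and from purity $i_{0}^{!}\simeq\Sigma^{-2,-1}i_{0}^{*}$; the first map of the resulting triangle lies in $H^{2,1}(\mathbb{F}_{p},\mathbb{F}_{p})=0$, which forces the splitting of Theorem \ref{T:defofpi0andpi}, and the same discussion over $D$ uses $H^{2,1}(k,\mathbb{F}_{p})=0$. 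Since $f_{1}^{*}$ is triangulated, commutes with $i^{*}$, is compatible with purity, and preserves coproducts (as do $i^{*}$ and $j_{*}$), it carries the triangle over $\mathbb{Z}_{p}$ to the one over $D$; hence $f_{1}^{*}(i_{0}^{*}\eta)=i^{*}\eta$, $f_{1}^{*}(\pi_{0})=\pi_{0}$, and $f_{1}^{*}$ identifies $f_{1}^{*}(i_{0}^{*}j_{0*}H\mathbb{F}_{p}^{\mathbb{Q}_{p}})$ with $i^{*}j_{*}H\mathbb{F}_{p}^{K}$ compatibly with $\mathrm{pr}_{\alpha_{0}}$. Finally, the classes $\xi_{j},\tau_{i}$ are defined by the coaction map \ref{eq:coaction} on $H^{*,*}(B\mu_{p})$, which is natural in the base field, so $f_{3}^{*}$ fixes them; since $\Psi_{\mathbb{Q}_{p}}$ is an isomorphism built from the monomials $\omega(\alpha)$ in the $\xi_{j},\tau_{i}$ (see \cite{HKO}), it follows that $f_{3}^{*}\Psi_{\mathbb{Q}_{p}}=\Psi_{K}$ and hence $f_{1}^{*}(i_{0}^{*}j_{0*}\Psi_{\mathbb{Q}_{p}}^{-1})=i^{*}j_{*}\Psi_{K}^{-1}$.

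This leaves the retraction $\pi$. Pulling back, $f_{1}^{*}(\pi)$ is again a retraction of $i^{*}\eta\colon H\mathbb{F}_{p}^{k}\to i^{*}j_{*}H\mathbb{F}_{p}^{K}$ (under the identification above), but it need not coincide with $\pi$: by Theorem \ref{T:defofpi0andpi} any two retractions of $i^{*}\eta$ differ by a morphism factoring through $\pi_{0}$, so $f_{1}^{*}(\pi)-\pi=c\circ\pi_{0}$ for some $c\in\mathrm{Hom}_{D(H\mathbb{F}_{p}^{k})}(\Sigma^{-1,-1}H\mathbb{F}_{p}^{k},H\mathbb{F}_{p}^{k})=H^{1,1}(k,\mathbb{F}_{p})$. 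Combining this with the previous paragraph gives
\[ f_{1}^{*}(P^{n}_{\mathbb{F}_{p}})-P^{n}_{k}=\bigl(f_{1}^{*}(\pi)-\pi\bigr)\circ\mathrm{pr}_{\alpha_{0}}\circ i^{*}j_{*}\Psi_{K}^{-1}\circ i^{*}\eta\circ i_{R}=c\circ\bigl(\pi_{0}\circ\mathrm{pr}_{\alpha_{0}}\circ i^{*}j_{*}\Psi_{K}^{-1}\circ i^{*}\eta\circ i_{R}\bigr). \]
For $g\colon\Sigma^{\infty}_{+}X\to\Sigma^{2m,m}H\mathbb{F}_{p}^{k}$ with $X\in\textup{Sm}_{k}$, composing the parenthesized factor with $g$ is precisely the $\alpha_{0}$-component of the composite shown to vanish in Lemma \ref{lemmafornexthm}(2) (taken with $f=g$); therefore $\bigl(f_{1}^{*}(P^{n}_{\mathbb{F}_{p}})-P^{n}_{k}\bigr)(g)=c\circ 0=0$, i.e.\ $P^{n}_{k}(g)=f_{1}^{*}(P^{n}_{\mathbb{F}_{p}})(g)$.

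Apart from the base-change compatibility $f_{3}^{*}\Psi_{\mathbb{Q}_{p}}=\Psi_{K}$, which rests on the characteristic-zero computation \cite{HKO}, the factorwise compatibilities above are routine six-functor bookkeeping. The point that has to be handled with care --- and the reason the statement is phrased for classes on smooth schemes rather than as an equality $f_{1}^{*}(P^{n}_{\mathbb{F}_{p}})=P^{n}_{k}$ of operations --- is that $\pi$ is pinned down only modulo $H^{1,1}(k,\mathbb{F}_{p})$, so that Lemma \ref{lemmafornexthm} is genuinely needed to conclude; this is the main obstacle.
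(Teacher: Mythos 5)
Your proposal is correct and follows essentially the same route as the paper: both arguments reduce the comparison to the observation that $f_{1}^{*}(P^{n}_{\mathbb{F}_{p}})$ and $P^{n}_{k}$ can differ only by a term factoring through the $\Sigma^{-1,-1}$-summand of $i^{*}j_{*}H\mathbb{F}_{p}^{K}$ (i.e.\ through $\pi_{0}$), which is then killed on $\Sigma^{\infty}_{+}X$ by the bidegree vanishing of Theorem \ref{voesusfri} exactly as in Lemma \ref{lemmafornexthm}. The only presentational difference is that the paper obtains the needed base-change compatibility by applying the exchange transformation $f_{2}^{*}j_{0\,*}\to j_{*}f_{3}^{*}$ to $P^{n}_{\mathbb{Q}_{p}}$ as a whole (using $f_{3}^{*}P^{n}_{\mathbb{Q}_{p}}=P^{n}_{K}$ from \cite{HKO}) and checks the unit compatibility by adjunction, rather than unwinding $\Psi_{\mathbb{Q}_{p}}$ factor by factor and invoking purity for the localization triangle.
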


\begin{proof}
Let $\eta_{0}:1 \to j_{0 \,*}j^{*}_{0}$ denote the unit map. Let $f_{2}^{*}\widehat{H}\mathbb{F}_{p}^{\mathbb{Z}_{p}} \to f_{2}^{*}j_{0 \,*}H\mathbb{F}_{p}^{\mathbb{Q}_{p}}$ be the map $f_{2}^{*}\eta_{0}$ induced by the isomorphism $j_{0}^{*}\widehat{H}\mathbb{F}_{p}^{\mathbb{Z}_{p}} \to H\mathbb{F}_{p}^{\mathbb{Q}_{p}}$. The exchange transformation $f_{2}^{*}j_{0 \,*}\to j_{*}f_{3}^{*}$ induces a morphism $f_{2}^{*}j_{0 \,*}H\mathbb{F}_{p}^{\mathbb{Q}_{p}} \to j_{*}f^{*}_{3}H\mathbb{F}_{p}^{\mathbb{Q}_{p}}$. Let $f_{2}^{*}\widehat{H}\mathbb{F}_{p}^{\mathbb{Z}_{p}} \to j_{*}f^{*}_{3}H\mathbb{F}_{p}^{\mathbb{Q}_{p}}$ be the map $\eta f^{*}_{2}$ induced by the isomorphism $$j^{*}f^{*}_{2} \widehat{H}\mathbb{F}_{p}^{\mathbb{Z}_{p}}\cong f^{*}_{3}j^{*}_{0}\widehat{H}\mathbb{F}_{p}^{\mathbb{Z}_{p}} \to f_{3}^{*}H\mathbb{F}_{p}^{\mathbb{Q}_{p}}.$$ Putting these maps together, we get the following square which commutes by adjunction.

\begin{equation} \label{diagpullback2}
\begin{tikzcd}
f_{2}^{*}\widehat{H}\mathbb{F}_{p}^{\mathbb{Z}_{p}} \arrow[r, "f_{2}^{*}\eta_{0}"] \arrow[d, "id."] & f_{2}^{*}j_{0 \,*}H\mathbb{F}_{p}^{\mathbb{Q}_{p}} \arrow[d, ""] \\
f_{2}^{*}\widehat{H}\mathbb{F}_{p}^{\mathbb{Z}_{p}} \arrow[r, "\eta f^{*}_{2}"] & j_{*}f^{*}_{3}H\mathbb{F}_{p}^{\mathbb{Q}_{p}}
\end{tikzcd}
\end{equation}
 \indent Applying the exchange transformation $f_{2}^{*}j_{0 \,*}\to j_{*}f_{3}^{*}$ to $P^{n}_{\mathbb{Q}_{p}}$, we get the following commuting square.
 
\[
\begin{tikzcd}
f_{2}^{*}j_{0 \,*}H\mathbb{F}_{p}^{\mathbb{Q}_{p}} \arrow[r,"f_{2}^{*}j_{0 \,*}P^{n}_{\mathbb{Q}_{p}}"] \arrow[d, ""] & f_{2}^{*}j_{0 \,*}\Sigma^{2n(p-1),n(p-1)}H\mathbb{F}_{p}^{\mathbb{Q}_{p}} \arrow[d,""] \\
j_{*}H\mathbb{F}_{p}^{K} \arrow[r, "j_{*}P^{n}_{K}"] & j_{*}\Sigma^{2n(p-1),n(p-1)}H\mathbb{F}_{p}^{K} 
\end{tikzcd}
\]
Applying $i^{*}$ (and the connection isomorphism $i^{*}f_{2}^{*} \cong f^{*}_{1}i^{*}_{0}$) to these two squares and combining with $g: \Sigma^{\infty}_{+}X \to \Sigma^{2m,m}H\mathbb{F}_{p}^{k}$ , we obtain the following commuting diagram.

\begin{equation} \label{diagpullback3}
\begin{tikzcd}
\Sigma^{\infty}_{+}X \arrow[r, "g"] \arrow[d, "id."] & \Sigma^{2m,m}H\mathbb{F}_{p}^{k} \arrow[r,"f_{1}^{*}i_{0}^{*}\eta_{0}"] \arrow[d, "id."] & f_{1}^{*}i_{0}^{*} j_{0 \,*}\Sigma^{2m,m}H\mathbb{F}_{p}^{\mathbb{Q}_{p}} \arrow[r, "f_{1}^{*}i_{0}^{*} j_{0 \,*}P^{n}_{\mathbb{Q}_{p}}"]\arrow[d, ""]
& f_{1}^{*}i_{0}^{*} j_{0 \,*}\Sigma^{2(m+n(p-1)),m+n(p-1)}H\mathbb{F}_{p}^{\mathbb{Q}_{p}} \arrow[d, ""]\\
\Sigma^{\infty}_{+}X \arrow[r, "g"]  & \Sigma^{2m,m}H\mathbb{F}_{p}^{k} \arrow[r, "i^{*}\eta"]& i^{*}j_{*}\Sigma^{2m,m}H\mathbb{F}_{p}^{K} \arrow[r, "i^{*}j_{*}P^{n}_{K}"] & i^{*}j_{*}\Sigma^{2(m+n(p-1)),m+n(p-1)}H\mathbb{F}_{p}^{K}
\end{tikzcd}
\end{equation}

Let $\pi':i_{0}^{*} j_{0 \,*}H\mathbb{F}_{p}^{\mathbb{Q}_{p}} \to H\mathbb{F}_{p}^{\mathbb{F}_{p}}$ and $\pi_{0}':i_{0}^{*} j_{0 \,*}H\mathbb{F}_{p}^{\mathbb{Q}_{p}} \to \Sigma^{-1,-1}H\mathbb{F}_{p}^{\mathbb{F}_{p}}$ be projection morphisms induced by the isomorphism $i_{0}^{*} j_{0 \,*}H\mathbb{F}_{p}^{\mathbb{Q}_{p}} \cong H\mathbb{F}_{p}^{\mathbb{F}_{p}} \oplus \Sigma^{-1,-1}H\mathbb{F}_{p}^{\mathbb{F}_{p}}$ of Theorem \ref{T:defofpi0andpi}. From Theorem \ref{voesusfri}, the two composites $\Sigma^{\infty}_{+}X \to \Sigma^{2(m+n(p-1))-1,m+n(p-1)-1}H\mathbb{F}_{p}^{k}$ given by the following diagram are equal to $0$.

\begin{equation} \label{diagpullback4}
\begin{tikzcd}
& & &  \Sigma^{2(m+n(p-1))-1,m+n(p-1)-1}H\mathbb{F}_{p}^{k}\\
\Sigma^{\infty}_{+}X \arrow[r, "g"] \arrow[d, "id."] & \Sigma^{2m,m}H\mathbb{F}_{p}^{k} \arrow[r,"f_{1}^{*}i_{0}^{*}\eta_{0}"] \arrow[d, "id."] & f_{1}^{*}i_{0}^{*} j_{0 \,*}\Sigma^{2m,m}H\mathbb{F}_{p}^{\mathbb{Q}_{p}} \arrow[r, "f_{1}^{*}i_{0}^{*} j_{0 \,*}P^{n}_{\mathbb{Q}_{p}}"]\arrow[d, ""]
& f_{1}^{*}i_{0}^{*} j_{0 \,*}\Sigma^{2(m+n(p-1)),m+n(p-1)}H\mathbb{F}_{p}^{\mathbb{Q}_{p}} \arrow[d, ""] \arrow[u, "f^{*}_{1}\pi_{0}'"]\\
\Sigma^{\infty}_{+}X \arrow[r, "g"]  & \Sigma^{2m,m}H\mathbb{F}_{p}^{k} \arrow[r, "i^{*}\eta"]& i^{*}j_{*}\Sigma^{2m,m}H\mathbb{F}_{p}^{K} \arrow[r, "i^{*}j_{*}P^{n}_{K}"] & i^{*}j_{*}\Sigma^{2(m+n(p-1)),m+n(p-1)}H\mathbb{F}_{p}^{K} \arrow[d, "\pi_{0}"] \\
& & &  \Sigma^{2(m+n(p-1))-1,m+n(p-1)-1}H\mathbb{F}_{p}^{k}
\end{tikzcd}
\end{equation}

Consider the following diagram.

\begin{equation} \label{diagpullback}
\begin{tikzcd}
& & &  \Sigma^{2(m+n(p-1)),m+n(p-1)}H\mathbb{F}_{p}^{k}\\
\Sigma^{\infty}_{+}X \arrow[r, "g"] \arrow[d, "id."] & \Sigma^{2m,m}H\mathbb{F}_{p}^{k} \arrow[r,"f_{1}^{*}i_{0}^{*}\eta_{0}"] \arrow[d, "id."] & f_{1}^{*}i_{0}^{*} j_{0 \,*}\Sigma^{2m,m}H\mathbb{F}_{p}^{\mathbb{Q}_{p}} \arrow[r, "f_{1}^{*}i_{0}^{*} j_{0 \,*}P^{n}_{\mathbb{Q}_{p}}"]\arrow[d, ""]
& f_{1}^{*}i_{0}^{*} j_{0 \,*}\Sigma^{2(m+n(p-1)),m+n(p-1)}H\mathbb{F}_{p}^{\mathbb{Q}_{p}} \arrow[d, ""] \arrow[u, "f^{*}_{1}\pi'"]\\
\Sigma^{\infty}_{+}X \arrow[r, "g"]  & \Sigma^{2m,m}H\mathbb{F}_{p}^{k} \arrow[r, "i^{*}\eta"]& i^{*}j_{*}\Sigma^{2m,m}H\mathbb{F}_{p}^{K} \arrow[r, "i^{*}j_{*}P^{n}_{K}"] & i^{*}j_{*}\Sigma^{2(m+n(p-1)),m+n(p-1)}H\mathbb{F}_{p}^{K} \arrow[d, "\pi"] \\
& & &  \Sigma^{2(m+n(p-1)),m+n(p-1)}H\mathbb{F}_{p}^{k}
\end{tikzcd}
\end{equation}
From Theorem \ref{T:phiproperties}, the composite $\Sigma^{\infty}_{+}X \to \Sigma^{2(m+n(p-1)),m+n(p-1)}H\mathbb{F}_{p}^{k}$ given by the upper half of diagram \ref{diagpullback} is equal to $f_{1}^{*}(P^{n}_{\mathbb{F}_{p}})(g)$ and the composite $\Sigma^{\infty}_{+}X \to \Sigma^{2(m+n(p-1)),m+n(p-1)}H\mathbb{F}_{p}^{k}$ given by the lower half of diagram \ref{diagpullback} is equal to $P^{n}_{k}(g)$. As diagram \ref{diagpullback3} commutes and the $2$ composite morphisms from diagram \ref{diagpullback4} are $0$, we then obtain that $f_{1}^{*}(P^{n}_{\mathbb{F}_{p}})(g)=P^{n}_{k}(g)$.
\end{proof}

\indent We can now prove that the Steenrod operations $P^{n}_{k}$ commute with base change on mod $p$ Chow groups. Let $f:\textup{Spec}(k_{1}) \to \textup{Spec}(k_{2})$ be given where $k_{1}, k_{2}$ are fields of characteristic $p$. Let $h:\textup{Spec}(k_{2}) \to \textup{Spec}(\mathbb{F}_{p})$ be the structure map.
\begin{corollary} \label{corbasechange}
Let $X \in \textup{Sm}_{k_{2}}$. Let $n \geq 0$. The following square commutes.

\[
\begin{tikzcd}
CH^{*}(X)/p \arrow[r, "P^{n}_{k_{2}}"] \arrow[d, "f^{*}"] & 
CH^{*}(X)/p \arrow[d, "f^{*}"] \\
CH^{*}(X_{k_{1}})/p \arrow[r, "P^{n}_{k_{1}}"] & CH^{*}(X_{k_{1}})/p
\end{tikzcd}
\]
\end{corollary}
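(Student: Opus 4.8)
The plan is to reduce the statement over the general extension $f:\operatorname{Spec}(k_1)\to\operatorname{Spec}(k_2)$ to the already-established comparison with the operations over the prime field $\mathbb{F}_p$, using Proposition \ref{P:pullback}. Write $h:\operatorname{Spec}(k_2)\to\operatorname{Spec}(\mathbb{F}_p)$ and $hf:\operatorname{Spec}(k_1)\to\operatorname{Spec}(\mathbb{F}_p)$ for the structure maps, noting that the triangle $\operatorname{Spec}(k_1)\to\operatorname{Spec}(k_2)\to\operatorname{Spec}(\mathbb{F}_p)$ commutes, so that $f^{*}h^{*}\cong (hf)^{*}$ on the level of pullback functors $SH(\mathbb{F}_p)\to SH(k_1)$, compatibly with the induced maps on $H\mathbb{F}_p^{*,*}H\mathbb{F}_p$. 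First I would record the naturality statement we actually need: the pullback functor $f^{*}:SH(k_2)\to SH(k_1)$ carries the universal operation $h^{*}(P^{n}_{\mathbb{F}_p})\in H\mathbb{F}_p^{k_2\,*,*}H\mathbb{F}_p^{k_2}$ to $(hf)^{*}(P^{n}_{\mathbb{F}_p})\in H\mathbb{F}_p^{k_1\,*,*}H\mathbb{F}_p^{k_1}$; this is immediate from the functoriality of $(-)^{*}$ applied to the morphism $P^{n}_{\mathbb{F}_p}:H\mathbb{F}_p^{\mathbb{F}_p}\to\Sigma^{2n(p-1),n(p-1)}H\mathbb{F}_p^{\mathbb{F}_p}$ in $SH(\mathbb{F}_p)$, since $f^{*}$ is a functor, commutes with suspensions, and sends the cartesian spectrum $H\mathbb{F}_p^{k_2}$ to $H\mathbb{F}_p^{k_1}$.

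Next I would invoke Proposition \ref{P:pullback} twice. For any $X\in\operatorname{Sm}_{k_2}$ and any class $g\in CH^{m}(X)/p=H^{2m,m}(X,\mathbb{F}_p)$, viewed as a morphism $\Sigma^{\infty}_{+}X\to\Sigma^{2m,m}H\mathbb{F}_p^{k_2}$, Proposition \ref{P:pullback} applied over $k_2$ gives $P^{n}_{k_2}(g)=h^{*}(P^{n}_{\mathbb{F}_p})(g)$. Pulling back along $f$ and using that $f^{*}$ is additive and compatible with composition of morphisms in $SH$, we get $f^{*}\bigl(P^{n}_{k_2}(g)\bigr)=\bigl(f^{*}h^{*}(P^{n}_{\mathbb{F}_p})\bigr)\bigl(f^{*}g\bigr)=(hf)^{*}(P^{n}_{\mathbb{F}_p})(f^{*}g)$, where $f^{*}g\in H^{2m,m}(X_{k_1},\mathbb{F}_p)=CH^{m}(X_{k_1})/p$ is the usual flat pullback on Chow groups (this identification of $f^{*}$ on motivic cohomology with the pullback on higher Chow groups is Theorem \ref{voesusfri} together with the compatibility of the comparison isomorphism with pullbacks cited after it). On the other hand, Proposition \ref{P:pullback} applied over $k_1$ to the class $f^{*}g$ gives $P^{n}_{k_1}(f^{*}g)=(hf)^{*}(P^{n}_{\mathbb{F}_p})(f^{*}g)$. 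Comparing the two expressions yields $f^{*}\bigl(P^{n}_{k_2}(g)\bigr)=P^{n}_{k_1}(f^{*}g)$, which is exactly commutativity of the square. Since $g$ was arbitrary and both composites are additive, the square commutes.

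The only genuinely delicate point is the bookkeeping in the first step: one must check that the various coherence isomorphisms — the cartesian-spectrum identification $f^{*}H\mathbb{F}_p^{k_2}\cong H\mathbb{F}_p^{k_1}$, the isomorphism $f^{*}\Sigma^{s,t}\cong\Sigma^{s,t}f^{*}$, and the composite $f^{*}h^{*}\cong(hf)^{*}$ — are mutually compatible, so that $f^{*}$ really sends $h^{*}(P^{n}_{\mathbb{F}_p})$ to $(hf)^{*}(P^{n}_{\mathbb{F}_p})$ as an element of the operation group rather than merely up to an ambiguous identification. This is a formal consequence of $\widehat{H}\mathbb{Z}$ being a cartesian $E_\infty$-ring spectrum (so that all these identifications are canonical and strictly compatible, as recalled in the excerpt for the square relating $D(\widehat{H}\mathbb{Z}^{S})$ and $SH(S)$), and of $(-)^{*}$ being a pseudofunctor. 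I would state this compatibility as a short lemma, or simply remark that it follows from the cited properties of $\widehat{H}\mathbb{Z}^{S}$ and the strong monoidality of $f^{*}$; beyond that the corollary is a one-line diagram chase from Proposition \ref{P:pullback}.
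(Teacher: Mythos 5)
Your proof is correct and is essentially the paper's own argument: apply Proposition~\ref{P:pullback} over $k_2$ to identify $P^{n}_{k_2}$ with $h^{*}(P^{n}_{\mathbb{F}_p})$ on mod $p$ Chow groups, apply it again over $k_1$ to identify $P^{n}_{k_1}$ with $(hf)^{*}(P^{n}_{\mathbb{F}_p})=f^{*}h^{*}(P^{n}_{\mathbb{F}_p})$, and use functoriality of $f^{*}$ to compare. The extra discussion you include about compatibility of the coherence isomorphisms is implicit in the paper's one-line chain of equalities but is the same argument, not a different route.
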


\begin{proof}
From Proposition \ref{P:pullback}, $h^{*}P^{n}_{\mathbb{F}_{p}}$ agrees with $P^{n}_{k_{2}}$ on $CH^{*}(X)/p$ and $f^{*}h^{*}P^{n}_{\mathbb{F}_{p}}$ agrees with $P^{n}_{k_{1}}$ on $CH^{*}(X_{k_{1}})/p $. Let $g:\Sigma^{\infty}_{+}X \to \Sigma^{2m,m}H\mathbb{F}_{p}^{k_{2}}$ be given. Then $$f^{*}(P^{n}_{k_{2}}(g))=f^{*}(h^{*}P^{n}_{\mathbb{F}_{p}}(g))=f^{*}h^{*}(P^{n}_{\mathbb{F}_{p}})(f^{*}g)=P^{n}_{k_{1}}(f^{*}g)$$ as required.
\end{proof}
 
 \begin{proposition}
The morphism $\beta_{k}=P^{(1,0,\ldots)}_{k}$ defined above is equal to the Bockstein homomorphism $\beta$ on mod $p$ motivic cohomology.
\end{proposition}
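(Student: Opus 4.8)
The plan is to deduce $\beta_{k}=\beta$ from the characteristic-$0$ case by means of the homomorphism $\Phi$. By Theorem \ref{T:phiproperties}(2) we already have $\beta_{k}=P^{(1,0,\ldots)}_{k}=\Phi(P^{(1,0,\ldots)}_{K})$, and over the characteristic-$0$ field $K$ the operation $P^{(1,0,\ldots)}_{K}$ is Voevodsky's Bockstein, which is the connecting homomorphism of the coefficient sequence $0\to\mathbb{Z}/p\to\mathbb{Z}/p^{2}\to\mathbb{Z}/p\to 0$ (see \cite{Voe}). To make this precise I would fix, for $S\in\{k,D,K\}$, the mod $p$ reduction $c_{S}\colon H\mathbb{Z}^{S}\to H\mathbb{F}^{S}_{p}$ and the connecting morphism $\partial_{S}\colon H\mathbb{F}^{S}_{p}\to\Sigma^{1,0}H\mathbb{Z}^{S}$ of the cofiber sequence $H\mathbb{Z}^{S}\xrightarrow{p}H\mathbb{Z}^{S}\xrightarrow{c_{S}}H\mathbb{F}^{S}_{p}\xrightarrow{\partial_{S}}\Sigma^{1,0}H\mathbb{Z}^{S}$ in $SH(S)$ (using the spectra $\widehat{H}\mathbb{Z}^{D}$, $\widehat{H}\mathbb{F}^{D}_{p}$ when $S=D$); then $P^{(1,0,\ldots)}_{K}=\Sigma^{1,0}c_{K}\circ\partial_{K}$ and, by definition, the Bockstein over $k$ is $\beta=\Sigma^{1,0}c_{k}\circ\partial_{k}$. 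Thus it suffices to prove $\Phi(\Sigma^{1,0}c_{K}\circ\partial_{K})=\Sigma^{1,0}c_{k}\circ\partial_{k}$.

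Since $i^{*}$ and $j^{*}$ are exact (triangulated) and strongly monoidal, $\widehat{H}\mathbb{Z}^{D}$ is Cartesian with $\widehat{H}\mathbb{F}^{D}_{p}=\widehat{H}\mathbb{Z}^{D}/p$, and $\widehat{H}\mathbb{Z}^{F}\cong H\mathbb{Z}^{F}$ over a field $F$, the functor $i^{*}$ identifies the cofiber sequence over $D$ with the one over $k$ and $j^{*}$ identifies it with the one over $K$; in particular $i^{*}\partial_{D}=\partial_{k}$, $i^{*}c_{D}=c_{k}$, $j^{*}\partial_{D}=\partial_{K}$, $j^{*}c_{D}=c_{K}$. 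Applying $i^{*}$ to the naturality squares of the unit $\eta\colon\mathrm{id}_{SH(D)}\to j_{*}j^{*}$ at $\partial_{D}$ and at $c_{D}$ (and using that $j_{*}$ commutes with $\Sigma^{1,0}$) yields two commuting squares in $SH(k)$:
\[
\begin{tikzcd}
H\mathbb{F}^{k}_{p} \arrow[r, "\partial_{k}"] \arrow[d, "i^{*}\eta"] & \Sigma^{1,0}H\mathbb{Z}^{k} \arrow[d, "i^{*}\eta"] \\
i^{*}j_{*}H\mathbb{F}^{K}_{p} \arrow[r, "i^{*}j_{*}\partial_{K}"] & \Sigma^{1,0}i^{*}j_{*}H\mathbb{Z}^{K}
\end{tikzcd}
\qquad
\begin{tikzcd}
H\mathbb{Z}^{k} \arrow[r, "c_{k}"] \arrow[d, "i^{*}\eta"] & H\mathbb{F}^{k}_{p} \arrow[d, "i^{*}\eta"] \\
i^{*}j_{*}H\mathbb{Z}^{K} \arrow[r, "i^{*}j_{*}c_{K}"] & i^{*}j_{*}H\mathbb{F}^{K}_{p}
\end{tikzcd}
\]
Using these squares to push $i^{*}\eta$ to the right, I would compute
\[
i^{*}j_{*}(\Sigma^{1,0}c_{K}\circ\partial_{K})\circ i^{*}\eta=i^{*}j_{*}(\Sigma^{1,0}c_{K})\circ i^{*}\eta\circ\partial_{k}=i^{*}\eta\circ\Sigma^{1,0}c_{k}\circ\partial_{k},
\]
and hence, since $\pi\circ i^{*}\eta=\Phi(\mathrm{id})=\mathrm{id}$,
\[
\Phi(\Sigma^{1,0}c_{K}\circ\partial_{K})=\pi\circ i^{*}j_{*}(\Sigma^{1,0}c_{K}\circ\partial_{K})\circ i^{*}\eta=(\pi\circ i^{*}\eta)\circ\Sigma^{1,0}c_{k}\circ\partial_{k}=\Sigma^{1,0}c_{k}\circ\partial_{k}=\beta.
\]
Combined with the first paragraph this gives $\beta_{k}=\Phi(P^{(1,0,\ldots)}_{K})=\Phi(\Sigma^{1,0}c_{K}\circ\partial_{K})=\beta$.

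The one ingredient that is not a formal manipulation is the characteristic-$0$ identification $P^{(1,0,\ldots)}_{K}=\Sigma^{1,0}c_{K}\circ\partial_{K}$, that is, that Voevodsky's Bockstein over $K$ is the connecting homomorphism of the mod-$p$ coefficient sequence; I would simply quote this from \cite{Voe}. The point in the diagram chase that needs attention is the assertion that the cofiber sequences — and therefore $c$ and $\partial$ — over $k$, $D$, $K$ are genuinely pulled back from one another along $i^{*}$ and $j^{*}$, which is exactly where Cartesian-ness of $\widehat{H}\mathbb{Z}^{D}$, the identification $\widehat{H}\mathbb{Z}^{F}\cong H\mathbb{Z}^{F}$ over fields, and exactness of $i^{*}$, $j^{*}$ enter; everything else reduces to naturality of $\eta$ and the retraction identity $\pi\circ i^{*}\eta=\mathrm{id}$ already recorded in $\Phi(\mathrm{id})=\mathrm{id}$.
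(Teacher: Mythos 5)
Your proposal is correct and follows essentially the same route as the paper: identify $P^{(1,0,\ldots)}_{K}$ with the characteristic-$0$ Bockstein, apply the natural transformation $i^{*}\to i^{*}j_{*}j^{*}$ to the mod-$p$ cofiber sequence of $\widehat{H}\mathbb{Z}^{D}$ (your two naturality squares are exactly the paper's diagram \ref{eq:bocktri}), and conclude via $\Phi(\beta_{K})=\beta_{k}$ and $\pi\circ i^{*}\eta=\mathrm{id}$. The only cosmetic difference is that the paper justifies the characteristic-$0$ input by saying the Bockstein is dual to $\tau_{0}$, whereas you quote the connecting-homomorphism description directly from Voevodsky; both are standard and equivalent.
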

\begin{proof}
We let $\beta$ denote the Bockstein homomorphism on mod $p$ motivic cohomology over any base scheme. The Bockstein homomorphism $\beta$ in characteristic $0$ is known to be dual to $\tau_{0}$. Hence, $\beta=P^{(1,0,\ldots)}_{K}=\beta_{K}$. Applying the natural transformation $i^{*} \to i^{*}j_{*}j^{*}$ to the diagram
\[
\begin{tikzcd}
\widehat{H}\mathbb{Z}^{D} \arrow[r, "\cdot p"]
& \widehat{H}\mathbb{Z}^{D} \arrow[r, ""]
&(\widehat{H}\mathbb{Z}^{D})/p \arrow[r, ""]  \arrow[rr, bend left, "\beta"]
& \Sigma^{1,0}\widehat{H}\mathbb{Z}^{D} \arrow[r, "proj."]
& \Sigma^{1,0}\widehat{H}\mathbb{Z}^{D}/p
\end{tikzcd}
\]
in $SH(D)$, we get the following commuting diagram in $SH(k)$. 
\begin{equation} \label{eq:bocktri}
\begin{tikzcd}
H\mathbb{Z}^{k} \arrow[r, "\cdot p"] \arrow[d, "i^{*}\eta"]
& H\mathbb{Z}^{k} \arrow[r, "proj."] \arrow[d, "i^{*}\eta"]
& H\mathbb{F}^{k}_{p} \arrow[r, ""]  \arrow[rr, bend left, "\beta"] \arrow[d, "i^{*}\eta"]
& \Sigma^{1,0}H\mathbb{Z}^{k} \arrow[r, "proj."] \arrow[d, "i^{*}\eta"]
& \Sigma^{1,0}H\mathbb{F}^{k}_{p} \arrow[d, "i^{*}\eta"] \\
i^{*}j_{*}H\mathbb{Z}^{K} \arrow[r, "\cdot p"] 
& i^{*}j_{*}H\mathbb{Z}^{K} \arrow[r, "proj."] 
& i^{*}j_{*}H\mathbb{F}^{K}_{p} \arrow[r, ""]  \arrow[rr, bend right, "i^{*}j_{*}\beta_{K}"] 
& \Sigma^{1,0}i^{*}j_{*}H\mathbb{Z}^{K} \arrow[r, "proj."] 
& \Sigma^{1,0}i^{*}j_{*}H\mathbb{F}^{K}_{p} \arrow[d, "\pi"] \\
 & & & &      \Sigma^{1,0}H\mathbb{F}^{k}_{p}
\end{tikzcd}
\end{equation}
From Theorem \ref{T:phiproperties}, $\Phi(\beta_{K})=\beta_{k}$. The composite in diagram \ref{eq:bocktri} that starts at $H\mathbb{F}^{k}_{p}$ in the top row and goes immediately down to $\Sigma^{1,0}H\mathbb{F}^{k}_{p}$ is equal to $\Phi(\beta_{K})$. As the diagram commutes and $\pi \circ i^{*}\eta=id.$, it follows that $\Phi(\beta_{K})=\beta=\beta_{k}$.
\end{proof}

\section{Adem relations}

\indent In this section, we use the map $\Phi:H\mathbb{F}^{K\,  *,*}_{p} H\mathbb{F}^{K}_{p} \to H\mathbb{F}^{k\,  *,*}_{p} H\mathbb{F}^{k}_{p}$ \ref{eq:phi def} and Theorem \ref{T:phiproperties} to show that the operations $P^{n}_{k}$ for $n \geq 0$ satisfy the expected Adem relations when restricted to mod $p$ Chow groups. The proof uses the corresponding Adem relations in characteristic $0$ which can be found in \cite[Th\'eor\`eme 4.5.1]{Rio} for $p=2$ and \cite[Th\'eor\`eme 4.5.2 ]{Rio} for odd $p$. First, we state the Adem relations for $p=2$ over the base $K$ of characteristic $0$. Let $\tau \in H^{0,1}(K, \mathbb{F}_{2})$ denote the class of $-1 \in \mu_{2}(K)$ and let $\rho \in H^{1,1}(K, \mathbb{F}_{2})$ denote the class of $-1 \in K^{*}/K^{* \, 2}$. Set $\textrm{Sq}^{2n}_{k}\coloneqq P^{n}_{k}$ and $\textrm{Sq}^{2n+1}_{k}=\beta_{k}\textrm{Sq}^{2n}_{k}$  for $n \geq0$.

\begin{theorem} \label{T:char0adem}
Let $a,b \in \mathbb{N}$ with $a<2b$. 
\begin{enumerate}
\item $$\mathrm{Sq}^{a}_{K}\mathrm{Sq}^{b}_{K}=  
      \sum_{j=0}^{\lfloor \frac{a}{2} \rfloor}\binom{b-1-j}{a-2j}\mathrm{Sq}^{a+b-j}_{K}\mathrm{Sq}^{j}_{K} +\sum_{\substack{j=1 \\ j\, odd}}^{\lfloor \frac{a}{2} \rfloor}\rho \binom{b-1-j}{a-2j} \mathrm{Sq}^{a+b-j-1}_{K}\mathrm{Sq}^{j}_{K}$$ if $a$ is even and $b$ is odd.\\          

\item  $$\mathrm{Sq}^{a}_{K}\mathrm{Sq}^{b}_{K}=  
      \sum_{\substack{j=0 \\ j\, odd}}^{\lfloor \frac{a}{2} \rfloor}\binom{b-1-j}{a-2j}\mathrm{Sq}^{a+b-j}_{K}\mathrm{Sq}^{j}_{K}$$ if $a$ and $b$ are odd.\\
      
\item $$\mathrm{Sq}^{a}_{K}\mathrm{Sq}^{b}_{K}=  
      \sum_{\substack{j=0 \\ }}^{\lfloor \frac{a}{2} \rfloor}\tau^{j \, \textup{mod}\, 2}\binom{b-1-j}{a-2j}\mathrm{Sq}^{a+b-j}_{K}\mathrm{Sq}^{j}_{K}$$ if $a$ and $b$ are even. \\
      
\item $$\mathrm{Sq}^{a}_{K}\mathrm{Sq}^{b}_{K}=  
      \sum_{\substack{j=0 \\ j\, even}}^{\lfloor \frac{a}{2} \rfloor}\binom{b-1-j}{a-2j}\mathrm{Sq}^{a+b-j}_{K}\mathrm{Sq}^{j}_{K} +\sum_{\substack{j=1 \\ j\, odd}}^{\lfloor \frac{a}{2} \rfloor}\rho \binom{b-1-j}{a-1-2j}\mathrm{Sq}^{a+b-j-1}_{K}\mathrm{Sq}^{j}_{K}$$ if $a$ is odd and $b$ is even.
    
   \end{enumerate}

\end{theorem}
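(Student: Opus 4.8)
The plan is to obtain Theorem~\ref{T:char0adem} directly from Riou's computation of the motivic Steenrod algebra, since by hypothesis $K$ has characteristic $0$ and the full dual motivic Steenrod algebra is known there. Concretely, I would invoke \cite[Th\'eor\`eme 4.5.1]{Rio}, which establishes exactly these four families of relations over a base of characteristic $\neq 2$ in terms of the classes $\rho \in H^{1,1}(-,\mathbb{F}_2)$ and $\tau \in H^{0,1}(-,\mathbb{F}_2)$, and specialize to $S=\mathrm{Spec}(K)$. The one point requiring care is that Riou's $\mathrm{Sq}^i$ must be matched with the $\mathrm{Sq}^i_K$ used here, i.e.\ with $\mathrm{Sq}^{2n}_K = P^n_K$ dual to $\xi_1^n$ and $\mathrm{Sq}^{2n+1}_K = \beta_K \mathrm{Sq}^{2n}_K$ with $\beta_K$ dual to $\tau_0$. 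Both families are the images of the Milnor-type dual basis of $\pi_{*,*}(H\mathbb{F}_p^K \wedge H\mathbb{F}_p^K)$ under the canonical pairing, equivalently both are pinned down by Voevodsky's total power operation on $H^{*,*}(B\mu_2,\mathbb{F}_2)$, so this identification is routine and the theorem follows.

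If one instead wanted a self-contained argument rather than a citation, the classical strategy adapts: (i) establish a detection principle --- a linear combination of admissible monomials $\mathrm{Sq}^I$ of a fixed bidegree vanishes as a stable operation on mod $2$ motivic cohomology iff it acts by $0$ on $H^{*,*}((B\mu_2)^{\times N},\mathbb{F}_2)$ for all $N$ --- using the description of $H^{*,*}(B\mu_2,\mathbb{F}_2)$ from Theorem~\ref{T:comp of coh bmu} together with the isomorphism $\Psi_K$ to see that the operations separate the dual basis in the relevant range; (ii) run the Bullett--Macdonald style symmetry argument on the bigraded total operation $\sum_i \mathrm{Sq}^i$, whose iterated composite $\mathrm{Sq}_s\,\mathrm{Sq}_t$ becomes symmetric in $s,t$ after the standard change of variables and can be evaluated on products of the universal classes $u,v \in H^{*,*}(B\mu_2,\mathbb{F}_2)$; and (iii) keep careful track of the motivic twists via Voevodsky's formulas for the Steenrod action on $u$ and $v$, which is precisely what produces the extra $\rho$- and $\tau$-terms in the four cases.

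The main obstacle, in either approach, is the bookkeeping of the classes $\rho$ and $\tau$: the binomial-coefficient combinatorics are identical to those of the classical topological Adem relations, so the genuinely new content is tracking which twist enters in each of the four parity cases (even/odd against even/odd) and verifying, for instance, that all twists cancel when $a$ and $b$ are both odd. Since this has been carried out in full in \cite{Rio}, I would present Theorem~\ref{T:char0adem} simply as the specialization of \cite[Th\'eor\`eme 4.5.1]{Rio} to the field $K$, together with the conventions check above.
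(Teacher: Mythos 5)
Your proposal matches the paper's treatment exactly: Theorem~\ref{T:char0adem} is not proved in the paper but is recalled verbatim from \cite[Th\'eor\`eme~4.5.1]{Rio}, specialized to the base $\mathrm{Spec}(K)$, with the standing convention $\mathrm{Sq}^{2n}_K=P^n_K$ and $\mathrm{Sq}^{2n+1}_K=\beta_K\mathrm{Sq}^{2n}_K$. Your sketched Bullett--Macdonald alternative is a reasonable outline of how Riou's theorem is itself proved, but for the purposes of this paper citing Riou is all that is needed and all that is done.
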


\indent Next, we state the characteristic $0$ Adem relations for $p$ odd.

\begin{theorem}
\begin{enumerate}
\item Let $a,b \in \mathbb{N}$ with $a<pb$. Then $$P^{a}_{K}P^{b}_{K}=\sum^{\lfloor \frac{a}{p} \rfloor}_{j=0}(-1)^{a+j}\binom{(p-1)(b-j)-1}{a-pj}P^{a+b-j}_{K}P^{j}_{K}.$$

\item Let $a,b \in \mathbb{N}$ with $a\leq pb$. Then $$P^{a}_{K}\beta_{K}P^{b}_{K}=\sum^{\lfloor \frac{a}{p} \rfloor}_{j=0}(-1)^{a+j}\binom{(p-1)(b-j)-1}{a-pj}\beta_{K}P^{a+b-j}_{K}P^{j}_{K}+$$ 

$$\sum^{\lfloor \frac{a-1}{p} \rfloor}_{j=0}(-1)^{a+j+1}\binom{(p-1)(b-j)-1}{a-pj-1}P^{a+b-j}_{K}\beta_{K}P^{j}_{K}.$$
\end{enumerate}
\end{theorem}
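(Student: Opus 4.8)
The plan is to record this theorem essentially as a citation of Riou's computation of the odd-primary Adem relations for motivic reduced power operations over a base of characteristic zero, namely \cite[Th\'eor\`eme 4.5.2]{Rio}, which itself builds on Voevodsky's construction in \cite{Voe}; so strictly speaking the ``proof'' is a reference, but I will indicate the argument that underlies it. The approach is the motivic ``universal example'' method: a bistable mod $p$ motivic cohomology operation over $K$ is determined by its effect on the motivic cohomology of finite products $(B\mu_p)^{\times n}$ (together with products of $\mathbb{P}^\infty$'s), so it suffices to check each claimed identity after evaluating both sides on a monomial in the classes $u_i \in H^{1,1}(B\mu_p,\mathbb{F}_p)$ and $v_i \in H^{2,1}(B\mu_p,\mathbb{F}_p)$, whose algebra is pinned down by the char-$0$ analogue of Theorem \ref{T:comp of coh bmu} (i.e. \cite[Theorem 6.10]{Voe}).

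With that reduction in hand, the key steps are, in order: first, express the iterated operations $P^a_K P^b_K$ and $P^a_K \beta_K P^b_K$ on the universal classes via the total power operation $P$ and its compatibility with the $p^2$-fold symmetric power; second, use that iterating $P$ corresponds to restricting the $S_{p^2}$-action along the wreath inclusion $S_p \wr S_p \hookrightarrow S_{p^2}$, and that the two natural ways of organizing this restriction give the two sides of the relation once expanded in Voevodsky's basis for $H^{*,*}(BS_{p^2},\mathbb{F}_p)$; third, read off the coefficients from the expansion of the relevant generating functions exactly as in the classical Bullett--Macdonald/Steenrod--Epstein computation, which produces the binomial coefficients $\binom{(p-1)(b-j)-1}{a-pj}$ and $\binom{(p-1)(b-j)-1}{a-pj-1}$ and the signs $(-1)^{a+j}$ with no change.

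The main obstacle is the last point of bookkeeping: one must verify that Voevodsky's computations of $H^{*,*}(BS_p,\mathbb{F}_p)$ and $H^{*,*}(BS_{p^2},\mathbb{F}_p)$, and the Künneth isomorphisms used en route, introduce no motivic coefficients — no powers of $\tau$ or $\rho$ — into the odd-primary relations, in contrast with $p=2$, where such corrections genuinely appear (Theorem \ref{T:char0adem}). For odd $p$ this comes down to the fact that the exterior generator satisfies $u^2=0$ and that $P^i$ shifts weight by $i(p-1)$, so the monomials that could carry a $\tau$- or $\rho$-twist do not survive in the relevant degrees; once this is checked the algebra is formally the classical Adem calculation. (Over a general base rather than a field, Riou replaces the symmetric-group cohomology inputs by a base-change argument; over the field $K$ this refinement is unnecessary.) Since in this paper the theorem is used only as the characteristic-zero input to be transferred to characteristic $p$ through the homomorphism $\Phi$ of \ref{eq:phi def} and Theorem \ref{T:phiproperties}, it suffices to quote it in the stated form.
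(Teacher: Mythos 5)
Your proposal takes the same approach as the paper: the paper does not prove this theorem, but simply cites \cite[Th\'eor\`eme 4.5.2]{Rio} for the odd-primary characteristic-zero Adem relations (and \cite[Th\'eor\`eme 4.5.1]{Rio} for $p=2$), exactly as you do. The supplementary sketch you give of the underlying argument in Riou/Voevodsky — reduction to the universal example via the motivic cohomology of $(B\mu_p)^{\times n}$, the wreath inclusion $S_p\wr S_p\hookrightarrow S_{p^2}$, and the observation that for odd $p$ no $\tau$- or $\rho$-corrections enter — is accurate background but goes beyond what the paper records, which is just the citation.
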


\indent We can now prove the Adem relations for the operations $P^{n}_{k}$ restricted to mod $p$ Chow groups.

\begin{theorem}
Let $X \in \textup{Sm}_{k}$ and let $x \in H^{2m,m}(X,\mathbb{F}_{p})=CH^{m}(X)/p$ for some $m \geq 0$. Let $a,b \in \mathbb{N}$ such that $a<pb$. Then $$P^{a}_{k}(P^{b}_{k}(x))=\sum^{\lfloor \frac{a}{p} \rfloor}_{j=0}(-1)^{a+j}\binom{(p-1)(b-j)-1}{a-pj}P^{a+b-j}_{k}(P^{j}_{k}(x)).$$

\end{theorem}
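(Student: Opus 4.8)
The plan is to reduce everything to the characteristic $0$ Adem relations via the transfer homomorphism $\Phi$, using Theorem \ref{T:phiproperties} twice. The key observation making this work is that the reduced power operations preserve the ``Chow bidegree'' $(2\ell,\ell)$, so the hypothesis of Theorem \ref{T:phiproperties}(3) is available at every stage of the argument.

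First I would represent $x$ by a morphism $f:\Sigma^{\infty}_{+}X \to \Sigma^{2m,m}H\mathbb{F}_{p}^{k}$ in $SH(k)$, which is legitimate by Theorem \ref{voesusfri}. Since $P^{b}_{k}$ shifts bidegree by $(2b(p-1),b(p-1))$, the composite $P^{b}_{k}\circ f$ has target $\Sigma^{2m',m'}H\mathbb{F}_{p}^{k}$ with $m'=m+b(p-1)$, and more generally $P^{j}_{k}\circ f$ has target $\Sigma^{2(m+j(p-1)),m+j(p-1)}H\mathbb{F}_{p}^{k}$ for each $j$. Hence Theorem \ref{T:phiproperties}(3) applies with these composites in the role of ``$f$''. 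Applying it with $h=P^{a}_{K}$ and the sequence $\alpha=(0,b,0,\ldots)$ (so $P^{\alpha}_{K}=P^{b}_{K}$ and $P^{\alpha}_{k}=P^{b}_{k}$), together with $\Phi(P^{a}_{K})=P^{a}_{k}$ from part (2), gives
$$P^{a}_{k}(P^{b}_{k}(x)) = \Phi(P^{a}_{K})(P^{b}_{k}(f)) = \Phi\bigl(P^{a}_{K}\circ P^{b}_{K}\bigr)(f).$$

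Next I would substitute the characteristic $0$ Adem relation for $P^{a}_{K}\circ P^{b}_{K}$, which is valid because $a<pb$. Since $\Phi$ is a homomorphism of graded additive groups and evaluation at $f$ is additive, the integer (equivalently, mod $p$) coefficients $(-1)^{a+j}\binom{(p-1)(b-j)-1}{a-pj}$ pull through, yielding
$$\Phi\bigl(P^{a}_{K}\circ P^{b}_{K}\bigr)(f) = \sum_{j=0}^{\lfloor a/p\rfloor}(-1)^{a+j}\binom{(p-1)(b-j)-1}{a-pj}\,\Phi\bigl(P^{a+b-j}_{K}\circ P^{j}_{K}\bigr)(f).$$
Then I would apply Theorem \ref{T:phiproperties}(3) a second time to each summand, now with $h=P^{a+b-j}_{K}$ and $\alpha=(0,j,0,\ldots)$, using again that $P^{j}_{k}(f)$ has Chow bidegree target, to get $\Phi(P^{a+b-j}_{K}\circ P^{j}_{K})(f)=\Phi(P^{a+b-j}_{K})(P^{j}_{k}(f))=P^{a+b-j}_{k}(P^{j}_{k}(f))$. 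Reassembling the terms produces the asserted formula.

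I do not expect a serious obstacle here: all the genuine content has been absorbed into Theorem \ref{T:phiproperties} and the characteristic $0$ Adem relations. The only points demanding care are the bidegree bookkeeping --- checking that $P^{b}_{k}(x)$ and each $P^{j}_{k}(x)$ still lie in a mod $p$ Chow group so that part (3) of that theorem is applicable at both the outer and inner stages --- and the verification that the integer coefficients of the Adem relation commute past both $\Phi$ and the evaluation map, which is immediate from additivity. One could also note in passing that $P^{0}_{k}=\mathrm{id}$ (the $j=0$ term) matches the characteristic $0$ normalization, so no degenerate terms cause trouble.
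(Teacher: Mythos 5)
Your route is the paper's: represent $x$ by $f:\Sigma^{\infty}_{+}X\to\Sigma^{2m,m}H\mathbb{F}^{k}_{p}$, use Theorem \ref{T:phiproperties} to rewrite $P^{a}_{k}(P^{b}_{k}(x))=\Phi(P^{a}_{K}P^{b}_{K})(x)$, substitute the characteristic $0$ Adem relation, and push $\Phi$ back through each summand via a second application of Theorem \ref{T:phiproperties}(3). For odd $p$ this is complete as written, since the relation you quote is exactly the odd-$p$ Adem relation.

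There is, however, a gap at $p=2$. With the conventions of the paper, $P^{a}_{K}=\mathrm{Sq}^{2a}_{K}$, so the relation you need is the one for $\mathrm{Sq}^{2a}_{K}\mathrm{Sq}^{2b}_{K}$ with both indices even, i.e.\ Theorem \ref{T:char0adem}(3). That relation is not the clean sum you wrote down: it reads $\sum_{i=0}^{a}\tau^{i\bmod 2}\binom{2b-1-i}{2a-2i}\mathrm{Sq}^{2a+2b-i}_{K}\mathrm{Sq}^{i}_{K}$, and the odd-$i$ summands, carrying a factor of $\tau$ and an odd Steenrod square, are genuinely nonzero elements of $H\mathbb{F}^{K\,*,*}_{p}H\mathbb{F}^{K}_{p}$. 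They only disappear after evaluation on a mod $2$ Chow class, and your argument never disposes of them. The paper does: for $i$ odd, $\mathrm{Sq}^{i}_{k}=\beta_{k}\mathrm{Sq}^{i-1}_{k}$ and the Bockstein vanishes on $H^{2m',m'}(X,\mathbb{F}_{2})$ because $H^{2m'+1,m'}(X,\mathbb{F}_{2})=0$ by Theorem \ref{voesusfri}; hence each odd-$i$ term evaluates to $\Phi\bigl(\tau\,\mathrm{Sq}^{2a+2b-i}_{K}\bigr)\bigl(\mathrm{Sq}^{i}_{k}(x)\bigr)=0$ by another application of Theorem \ref{T:phiproperties}(3). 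Finally, to match the surviving even-$i$ coefficients with the ones in the statement you need the (easy, Lucas-type) congruence $\binom{2b-1-2j}{2a-4j}\equiv\binom{b-1-j}{a-2j}\pmod 2$. Adding these two points closes the argument; everything else in your proposal is fine.
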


\begin{proof}
From  Theorem \ref{T:phiproperties}, $P^{a}_{k}(P^{b}_{k}(x))=\Phi(P^{a}_{K}P^{b}_{K})(x)$. We then use the Adem relations in characteristic $0$ to rewrite $P^{a}_{K}P^{b}_{K} \in H\mathbb{F}^{K \, *,*}_{p} H\mathbb{F}^{K}_{p}$. Note that the Bockstein $\beta_{k}$ is the $0$ homomorphism on mod $p$ Chow groups. If $p=2$, $\Phi(\textup{Sq}^{n}_{K})(x)= \textup{Sq}^{n}_{k}(x)=0$ whenever $n$ is odd. Thus, applying Theorem \ref{T:phiproperties}, we get $$P^{a}_{k}(P^{b}_{k}(x))=\Phi(P^{a}_{K}P^{b}_{K})(x)=\Phi(\sum^{\lfloor \frac{a}{p} \rfloor}_{j=0}(-1)^{a+j}\binom{(p-1)(b-j)-1}{a-pj}P^{a+b-j}_{K}P^{j}_{K})(x)$$

$$=\sum^{\lfloor \frac{a}{p} \rfloor}_{j=0}(-1)^{a+j}\binom{(p-1)(b-j)-1}{a-pj}P^{a+b-j}_{k}(P^{j}_{k}(x)).$$
\end{proof}

\section{Coaction map for smooth $X$ }

\indent In this section, for $X\in \textup{Sm}_{k}$, we describe a coaction map $$\lambda_{X}:H^{*,*}(X, \mathbb{F}_{p})\to \pi_{-*,-*}(\bigoplus\limits _{\alpha} \Sigma ^{p_{\alpha},q_{\alpha}}H\mathbb{F}^{k}_{p}) \otimes_{\pi_{-*,-*}H\mathbb{F}^{k}_{p}} H^{*,*}(X, \mathbb{F}_{p})$$ such that the actions of the cohomology operations $P^{n}_{k}$ defined in Section \ref{sec:def} on $H^{*,*}(X, \mathbb{F}_{p})$ are determined by $\lambda_{X}$. We show that $\lambda_{X}$ is a ring homomorphism when restricted to mod $p$ Chow groups. This will allow us to prove the Cartan formula in the next section.

\indent  There is a multiplication morphism \begin{equation} \label{eq:defofm} m:(\bigoplus\limits _{\alpha} \Sigma ^{p_{\alpha},q_{\alpha}}H\mathbb{F}^{k}_{p}) \wedge (\bigoplus\limits _{\alpha} \Sigma ^{p_{\alpha},q_{\alpha}}H\mathbb{F}^{k}_{p}) \to \bigoplus\limits _{\alpha} \Sigma ^{p_{\alpha},q_{\alpha}}H\mathbb{F}^{k}_{p}\end{equation} defined as $m=r \circ \mu_{2}^{k} \circ (\Psi_{k} \wedge \Psi_{k})$. The morphism $m$ defines multiplication on $$(\bigoplus\limits _{\alpha} \Sigma ^{p_{\alpha},q_{\alpha}}H\mathbb{F}^{k}_{p})^{*,*}(\Sigma^{\infty}_{+}X)$$ and $$\pi_{*,*}(\bigoplus\limits _{\alpha} \Sigma ^{p_{\alpha},q_{\alpha}}H\mathbb{F}^{k}_{p}).$$ For sequences $\alpha_{1}, \alpha_{2}$,  Proposition \ref{P:tau2=0} allows us to calculate the product $$r_{*}(\omega(\alpha_{1}))r_{*}(\omega(\alpha_{2}))\in \pi_{*,*}(\bigoplus\limits _{\alpha} \Sigma ^{p_{\alpha},q_{\alpha}}H\mathbb{F}^{k}_{p})$$ in terms of another sequence $\alpha_{1}+\alpha_{2}$ by using the relations $\tau_{i}^{2}=0$ for $i\geq0$.

\begin{proposition} \label{P:coactidentitensor}
The natural ring homomorphism $$\pi_{-*,-*}(\bigoplus\limits _{\alpha} \Sigma ^{p_{\alpha},q_{\alpha}}H\mathbb{F}^{k}_{p}) \otimes_{\pi_{-*,-*}H\mathbb{F}^{k}_{p}}H^{*,*}(X, \mathbb{F}_{p}) \to (\bigoplus\limits _{\alpha} \Sigma ^{p_{\alpha},q_{\alpha}}H\mathbb{F}^{k}_{p})^{*,*}(\Sigma^{\infty}_{+}X)$$ is an isomorphism.
\end{proposition}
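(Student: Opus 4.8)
The plan is to reduce the statement to the compactness of $\Sigma^{\infty}_{+}X$ in $SH(k)$: this lets one commute bigraded homotopy and $\Sigma^{\infty}_{+}X$-cohomology past the infinite coproduct $\bigoplus_{\alpha}$, after which the map turns out to be an isomorphism summand by summand for essentially trivial reasons.

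First I would record the compactness inputs. Since $X \in \textup{Sm}_{k}$, the spectrum $\Sigma^{\infty}_{+}X$ is a compact object of $SH(k)$ (these, together with their bigraded suspensions, are the standard compact generators of $SH(k)$), and every bigraded sphere $\Sigma^{a,b}\Sigma^{\infty}_{+}\mathrm{Spec}(k)$ is compact as well, being obtained from the compact unit $\Sigma^{\infty}_{+}\mathrm{Spec}(k)$ by the autoequivalence $\Sigma^{a,b}$. Hence both $\textup{Hom}_{SH(k)}(\Sigma^{\infty}_{+}X, -)$ and $\pi_{a,b}(-) = \textup{Hom}_{SH(k)}(\Sigma^{a,b}\Sigma^{\infty}_{+}\mathrm{Spec}(k), -)$ commute with arbitrary coproducts in $SH(k)$.

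Next I would unwind both sides. Compactness of the bigraded spheres gives $\pi_{-*,-*}(\bigoplus_{\alpha}\Sigma^{p_{\alpha},q_{\alpha}}H\mathbb{F}^{k}_{p}) = \bigoplus_{\alpha}\pi_{-*,-*}(\Sigma^{p_{\alpha},q_{\alpha}}H\mathbb{F}^{k}_{p})$, and since tensor products commute with direct sums in the first variable, the source of the map becomes $\bigoplus_{\alpha}\bigl(\pi_{-*,-*}(\Sigma^{p_{\alpha},q_{\alpha}}H\mathbb{F}^{k}_{p}) \otimes_{\pi_{-*,-*}H\mathbb{F}^{k}_{p}}H^{*,*}(X,\mathbb{F}_{p})\bigr)$; in particular no flatness is needed. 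Each $\pi_{-*,-*}(\Sigma^{p_{\alpha},q_{\alpha}}H\mathbb{F}^{k}_{p})$ is free of rank one over $\pi_{-*,-*}H\mathbb{F}^{k}_{p} = H^{*,*}(k,\mathbb{F}_{p})$ on a generator of bidegree $(-p_{\alpha},-q_{\alpha})$, so the $\alpha$-summand is just $H^{*,*}(X,\mathbb{F}_{p})$ with its bidegree shifted by $(-p_{\alpha},-q_{\alpha})$. On the target side, compactness of $\Sigma^{\infty}_{+}X$ gives $(\bigoplus_{\alpha}\Sigma^{p_{\alpha},q_{\alpha}}H\mathbb{F}^{k}_{p})^{*,*}(\Sigma^{\infty}_{+}X) = \bigoplus_{\alpha}(\Sigma^{p_{\alpha},q_{\alpha}}H\mathbb{F}^{k}_{p})^{*,*}(\Sigma^{\infty}_{+}X) = \bigoplus_{\alpha}H^{*+p_{\alpha},*+q_{\alpha}}(X,\mathbb{F}_{p})$, the same direct sum of shifts of $H^{*,*}(X,\mathbb{F}_{p})$. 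Finally I would note that the natural homomorphism is natural in the left $H\mathbb{F}^{k}_{p}$-module and that the coproduct inclusions $\Sigma^{p_{\alpha},q_{\alpha}}H\mathbb{F}^{k}_{p}\hookrightarrow\bigoplus_{\beta}\Sigma^{p_{\beta},q_{\beta}}H\mathbb{F}^{k}_{p}$ are $H\mathbb{F}^{k}_{p}$-module maps, so under the identifications above it is the direct sum over $\alpha$ of the natural maps $\pi_{-*,-*}(\Sigma^{p_{\alpha},q_{\alpha}}H\mathbb{F}^{k}_{p})\otimes_{\pi_{-*,-*}H\mathbb{F}^{k}_{p}}H^{*,*}(X,\mathbb{F}_{p})\to(\Sigma^{p_{\alpha},q_{\alpha}}H\mathbb{F}^{k}_{p})^{*,*}(\Sigma^{\infty}_{+}X)$; each of these is an isomorphism, being the $(p_{\alpha},q_{\alpha})$-suspension of the tautological identification $H^{*,*}(k,\mathbb{F}_{p})\otimes_{H^{*,*}(k,\mathbb{F}_{p})}H^{*,*}(X,\mathbb{F}_{p}) = H^{*,*}(X,\mathbb{F}_{p}) = (H\mathbb{F}^{k}_{p})^{*,*}(\Sigma^{\infty}_{+}X)$ coming from the $H\mathbb{F}^{k}_{p}$-module structure. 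Compatibility with the ring structures (on the target the one induced by $m$, hence by $\Psi_{k}$ and $\mu^{k}_{2}$) is immediate from the constructions and is not needed for the bijectivity.

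The only genuine point — the \emph{main obstacle}, such as it is — is that there are infinitely many sequences $\alpha$, so one must know that bigraded homotopy and $\Sigma^{\infty}_{+}X$-cohomology commute with the infinite wedge $\bigoplus_{\alpha}$. This is precisely where the compactness of $\Sigma^{\infty}_{+}X$ (that is, $X$ smooth of finite type over $k$) is used, and it is also the reason no completion appears here, in contrast with the power series in Theorem \ref{T:comp of coh bmu}, where $B\mu_{p}$ is only a limit of smooth schemes. Should one prefer to avoid compactness, the same conclusion follows from the elementary observation that in any fixed bidegree only finitely many $\alpha$ contribute: for the relevant group $H^{*+p_{\alpha},*+q_{\alpha}}(X,\mathbb{F}_{p})$ to be nonzero the weight $q_{\alpha}$ must be bounded, and for each bound there are finitely many admissible $\alpha$ (using also that $X$ is finite-dimensional); but the compactness argument is cleaner and more robust.
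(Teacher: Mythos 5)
Your proof is correct and uses the same key idea as the paper: compactness of $\Sigma^{\infty}_{+}X$ in $SH(k)$, which lets cohomology commute with the infinite coproduct $\bigoplus_{\alpha}\Sigma^{p_{\alpha},q_{\alpha}}H\mathbb{F}^{k}_{p}$. The paper's proof is just that one sentence; your write-up fills in the summand-by-summand identification and adds an optional finiteness alternative, but the route is the same.
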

\begin{proof}
The suspension spectrum $\Sigma^{\infty}_{+}X \in SH(k)$ is compact. Hence, $$\mathrm{Hom}_{SH(k)}(\Sigma^{s,t}\Sigma^{\infty}_{+}X, \bigoplus\limits _{\alpha} \Sigma ^{p_{\alpha},q_{\alpha}}H\mathbb{F}^{k}_{p}) \cong \bigoplus\limits _{\alpha}\mathrm{Hom}_{SH(k)}(\Sigma^{s,t}\Sigma^{\infty}_{+}X,  \Sigma ^{p_{\alpha},q_{\alpha}}H\mathbb{F}^{k}_{p})$$ for all $s,t \in \mathbb{Z}$.
\end{proof} 

\begin{definition} Using the isomorphism $$(\bigoplus\limits _{\alpha} \Sigma ^{p_{\alpha},q_{\alpha}}H\mathbb{F}^{k}_{p})^{*,*}(\Sigma^{\infty}_{+}X) \cong \pi_{-*,-*}(\bigoplus\limits _{\alpha} \Sigma ^{p_{\alpha},q_{\alpha}}H\mathbb{F}^{k}_{p}) \otimes_{\pi_{-*,-*}H\mathbb{F}^{k}_{p}}H^{*,*}(X, \mathbb{F}_{p})$$ from Proposition \ref{P:coactidentitensor} , define an additive homomorphism of graded abelian groups $$\lambda_{X}:H^{*,*}(X, \mathbb{F}_{p})\to \pi_{-*,-*}(\bigoplus\limits _{\alpha} \Sigma ^{p_{\alpha},q_{\alpha}}H\mathbb{F}^{k}_{p}) \otimes_{\pi_{-*,-*}H\mathbb{F}^{k}_{p}} H^{*,*}(X, \mathbb{F}_{p})$$ by the composite
\begin{equation} \label{coacitonmapdef}
\begin{tikzcd}
H\mathbb{F}^{k \, *,*}_{p}(\Sigma^{\infty}_{+}X) \arrow[r, "i_{R \, *}"]
& (H\mathbb{F}^{k}_{p} \wedge H\mathbb{F}^{k}_{p})^{*,*}(\Sigma^{\infty}_{+}X) \arrow[d, "r_{*}"] \\
& \pi_{-*,-*}(\bigoplus\limits _{\alpha} \Sigma ^{p_{\alpha},q_{\alpha}}H\mathbb{F}^{k}_{p}) \otimes_{\pi_{-*,-*}H\mathbb{F}^{k}_{p}}H^{*,*}(X, \mathbb{F}_{p}).
\end{tikzcd}
\end{equation}
\end{definition}

\begin{proposition} Restricted to mod $p$ Chow groups, $\lambda_{X}$ preserves multiplication. \label{P:homringspec}
\end{proposition}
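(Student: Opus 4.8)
The plan is to check the ring identity on representatives, transferring the computation to the characteristic $0$ picture, where $\Psi_{K}$ is invertible and the corresponding coaction is tautologically a ring map; the hypothesis that $x$ and $y$ lie in Chow groups will be used at exactly one point, through Lemma \ref{lemmafornexthm}(2), to annihilate the contribution of the $\pi_{0}$-summand of the splitting in Theorem \ref{T:defofpi0andpi}. Fix $x\in H^{2m,m}(X,\mathbb{F}_{p})$ represented by $f\colon\Sigma^{\infty}_{+}X\to\Sigma^{2m,m}H\mathbb{F}^{k}_{p}$ and $y\in H^{2n,n}(X,\mathbb{F}_{p})$ represented by $g\colon\Sigma^{\infty}_{+}X\to\Sigma^{2n,n}H\mathbb{F}^{k}_{p}$. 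Since the map of Proposition \ref{P:coactidentitensor} is a ring isomorphism for the multiplication on $(\bigoplus_{\alpha}\Sigma^{p_{\alpha},q_{\alpha}}H\mathbb{F}^{k}_{p})^{*,*}(\Sigma^{\infty}_{+}X)$ induced by $m$ (see \ref{eq:defofm}) and by the diagonal $\Delta$ of $X$, it suffices to prove that $\lambda_{X}(xy)$ and $m\circ(\lambda_{X}(x)\wedge\lambda_{X}(y))\circ\Delta$ coincide there. Unwinding $m=r\circ\mu^{k}_{2}\circ(\Psi_{k}\wedge\Psi_{k})$, the latter equals $r_{*}\bigl(\mu^{k}_{2}\circ((\Psi_{k}\circ\lambda_{X}(x))\wedge(\Psi_{k}\circ\lambda_{X}(y)))\circ\Delta\bigr)$; and since $xy$ is represented by $\mu^{k}_{1}\circ(f\wedge g)\circ\Delta$ and the standard identity $i_{R}\circ\mu^{k}_{1}=\mu^{k}_{2}\circ(i_{R}\wedge i_{R})$ holds, $\lambda_{X}(xy)=r_{*}\bigl(\mu^{k}_{2}\circ((i_{R}\circ f)\wedge(i_{R}\circ g))\circ\Delta\bigr)$. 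So the statement reduces to showing that $\mu^{k}_{2}\circ((\Psi_{k}\lambda_{X}(x))\wedge(\Psi_{k}\lambda_{X}(y)))\circ\Delta$ and $\mu^{k}_{2}\circ((i_{R}f)\wedge(i_{R}g))\circ\Delta$ agree after applying $r_{*}=(\oplus\pi)\circ(i^{*}j_{*}\Psi^{-1}_{K})\circ(i^{*}\eta)$.

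The key step, and the only place the Chow hypothesis enters, is the identity $i^{*}\eta\circ\Psi_{k}\circ\lambda_{X}(x)=i^{*}\eta\circ i_{R}\circ f$ and its analogue for $y$. To prove it, set $w\coloneqq(i^{*}j_{*}\Psi^{-1}_{K})\circ(i^{*}\eta)\circ i_{R}\circ f$; then $\lambda_{X}(x)=(\oplus\pi)(w)$ by the definition of $\lambda_{X}$, while Lemma \ref{lemmafornexthm}(2) asserts exactly that $(\oplus\pi_{0})(w)=0$. Since the splitting of Theorem \ref{T:defofpi0andpi} gives $\mathrm{id}=i^{*}\eta\circ\pi+\iota_{0}\circ\pi_{0}$ on $i^{*}j_{*}H\mathbb{F}^{K}_{p}$, where $\iota_{0}$ is the inclusion of the complementary $\Sigma^{-1,-1}H\mathbb{F}^{k}_{p}$-summand, we get $w=(\oplus i^{*}\eta)(\lambda_{X}(x))$. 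Composing with the commuting square \ref{PsiDcommute}, which reads $i^{*}\eta\circ\Psi_{k}=(i^{*}j_{*}\Psi_{K})\circ(\oplus i^{*}\eta)$, then yields $i^{*}\eta\circ\Psi_{k}\circ\lambda_{X}(x)=(i^{*}j_{*}\Psi_{K})(w)=i^{*}\eta\circ i_{R}\circ f$.

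To finish I would pass through the monoidal formalism. Because $\mu^{k}_{2}=i^{*}\mu^{D}_{2}$ and $i^{*}\eta$ is a lax monoidal natural transformation (the unit of $(j^{*},j_{*})$ is monoidal and $i^{*}$ is strong monoidal), $i^{*}\eta$ carries a $\mu^{k}_{2}$-product of two morphisms $\Sigma^{\infty}_{+}X\to\Sigma^{*,*}(H\mathbb{F}^{k}_{p}\wedge H\mathbb{F}^{k}_{p})$ to the corresponding $i^{*}j_{*}(\mu^{K}_{2})$-product of their composites with $i^{*}\eta$, formed using the lax structure maps of $i^{*}j_{*}$; in particular that product depends on the two morphisms only through their composites with $i^{*}\eta$. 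Applying this to the pair $(\Psi_{k}\circ\lambda_{X}(x),\Psi_{k}\circ\lambda_{X}(y))$ and to the pair $(i_{R}\circ f,i_{R}\circ g)$ and invoking the identity of the previous paragraph, the morphisms $\mu^{k}_{2}\circ((\Psi_{k}\lambda_{X}(x))\wedge(\Psi_{k}\lambda_{X}(y)))\circ\Delta$ and $\mu^{k}_{2}\circ((i_{R}f)\wedge(i_{R}g))\circ\Delta$ have the same image under $i^{*}\eta$, hence the same image under $r_{*}$. This is the required equality $m\circ(\lambda_{X}(x)\wedge\lambda_{X}(y))\circ\Delta=\lambda_{X}(xy)$.

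I expect the real content to be concentrated in the preceding two paragraphs: the bare coaction $r_{*}\circ i_{R*}$ is not multiplicative on all of $H^{*,*}(X,\mathbb{F}_{p})$, since $r$ is not a ring homomorphism, and it is precisely the vanishing of the $\pi_{0}$-components supplied by Lemma \ref{lemmafornexthm}(2) --- which itself rests on $p_{\alpha}\geq 2q_{\alpha}$ together with $H^{a,b}(X,\mathbb{F}_{p})=0$ for $a>2b$, hence on $x$ and $y$ being in Chow degrees --- that allows the genuinely multiplicative characteristic $0$ coaction to govern $\lambda_{X}$ on Chow classes. The remaining coherence bookkeeping for the lax monoidal structure maps of $i^{*}j_{*}$ is routine, and I would not carry it out in full.
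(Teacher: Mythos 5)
Your proposal is correct and takes essentially the same route as the paper's proof: reduce to comparing $r_{*}$ of the two $\mu^{k}_{2}$-products, use Lemma \ref{lemmafornexthm}(2) together with the splitting of Theorem \ref{T:defofpi0andpi} to get $i^{*}\eta\circ\Psi_{k}\circ\lambda_{X}(x)=i^{*}\eta\circ i_{R}\circ f$ (the paper's diagrams \ref{diagX1} and \ref{diagX2}), and then invoke the compatibility of $i^{*}\eta$ with the multiplications $\mu^{k}_{2}$ and $i^{*}j_{*}\mu^{K}_{2}$ to conclude. The lax-monoidal coherence you defer as routine is precisely what the paper checks in diagrams \ref{eq:triangle} and \ref{eq:bigcd}, using that the counit $j^{*}j_{*}\to\mathrm{id}$ is an isomorphism because $j$ is an open immersion.
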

\begin{proof} Let $f: \Sigma^{\infty}_{+}X \to \Sigma^{2m,m}H\mathbb{F}_{p}^{k}$ and $g: \Sigma^{\infty}_{+}X \to \Sigma^{2n,n}H\mathbb{F}_{p}^{k}$ be given. We need to show that $\lambda_{X}(fg)=\lambda_{X}(f)\lambda_{X}(g).$ The right $H\mathbb{F}_{p}^{k}$ map $i_{R}$ is a morphism of commutative ring spectra. Hence, $i_{R *}$ is a homomorphism of rings. Hence, we need to prove that $r_{*}(i_{R *}(f)i_{R *}(g))=r_{*}(i_{R *}(f))r_{*}(i_{R *}(g)).$

\indent Applying the natural transformation $i^{*} \to i^{*}j_{*}j^{*}$ to $\mu^{D}_{2}$, we get a commuting diagram.
\begin{equation} \label{eq:ringhomspectra1diag}
\begin{tikzcd}
(H\mathbb{F}^{k}_{p}\wedge H\mathbb{F}^{k}_{p})\wedge(H\mathbb{F}^{k}_{p}\wedge H\mathbb{F}^{k}_{p}) \arrow[r, "\mu^{k}_{2}"] \arrow[d, "i^{*}\eta"]
& H\mathbb{F}^{k}_{p}\wedge H\mathbb{F}^{k}_{p} \arrow[d, "i^{*}\eta"] \\
i^{*}j_{*}((H\mathbb{F}^{K}_{p}\wedge H\mathbb{F}^{K}_{p})\wedge(H\mathbb{F}^{K}_{p}\wedge H\mathbb{F}^{K}_{p})) \arrow[r, "i^{*}j_{*}\mu^{K}_{2}"]
& i^{*}j_{*}(H\mathbb{F}^{K}_{p}\wedge H\mathbb{F}^{K}_{p}) \arrow[d, "\oplus \pi"] \\
& \bigoplus\limits _{\alpha} \Sigma ^{p_{\alpha},q_{\alpha}}H\mathbb{F}^{k}_{p}
\end{tikzcd}
\end{equation}
We will factor the left vertical morphism in this diagram. Consider the following triangle

\begin{equation} \label{eq:triangle}
\begin{tikzcd}
(\widehat{H}\mathbb{F}^{D}_{p}\wedge \widehat{H}\mathbb{F}^{D}_{p})\wedge(\widehat{H}\mathbb{F}^{D}_{p}\wedge \widehat{H}\mathbb{F}^{D}_{p}) \arrow[r, "\eta \wedge \eta"] \arrow[d, "\eta"]
& j_{*}(H\mathbb{F}^{K}_{p}\wedge H\mathbb{F}^{K}_{p})\wedge j_{*}(H\mathbb{F}^{K}_{p}\wedge H\mathbb{F}^{K}_{p}) \arrow[dl, ""] \\
j_{*}(H\mathbb{F}^{K}_{p}\wedge H\mathbb{F}^{K}_{p} \wedge H\mathbb{F}^{K}_{p}\wedge H\mathbb{F}^{K}_{p})
\end{tikzcd}
\end{equation}
 where the morphism on the hypotenuse is defined by the lax monoidal property of $j_{*}$. Note that the counit morphism $\epsilon:j^{*}j_{*} \to id.$ is an isomorphism since $j$ is open. By adjunction, the morphism on the hypotenuse of diagram \ref{eq:triangle} is induced by the isomorphism $$\epsilon \wedge \epsilon:j^{*}j_{*}(H\mathbb{F}^{K}_{p}\wedge H\mathbb{F}^{K}_{p})\wedge j^{*}j_{*}(H\mathbb{F}^{K}_{p}\wedge H\mathbb{F}^{K}_{p}) \to (H\mathbb{F}^{K}_{p}\wedge H\mathbb{F}^{K}_{p})\wedge (H\mathbb{F}^{K}_{p}\wedge H\mathbb{F}^{K}_{p}).$$ The morphism $\eta$ on the left leg of the triangle \ref{eq:triangle} is induced by the isomorphism $$j^{*}\eta:j^{*}((\widehat{H}\mathbb{F}^{D}_{p}\wedge \widehat{H}\mathbb{F}^{D}_{p})\wedge(\widehat{H}\mathbb{F}^{D}_{p}\wedge \widehat{H}\mathbb{F}^{D}_{p})) \to (H\mathbb{F}^{K}_{p}\wedge H\mathbb{F}^{K}_{p})\wedge(H\mathbb{F}^{K}_{p}\wedge H\mathbb{F}^{K}_{p}).$$ Using that pullback is strongly monoidal, we then have the following commuting triangle.
 
\[
\begin{tikzcd}
j^{*}(\widehat{H}\mathbb{F}^{D}_{p}\wedge \widehat{H}\mathbb{F}^{D}_{p})\wedge j^{*}(\widehat{H}\mathbb{F}^{D}_{p}\wedge \widehat{H}\mathbb{F}^{D}_{p}) \arrow[r, "j^{*}\eta \wedge j^{*}\eta"] \arrow[d, "j^{*}\eta"]
& j^{*}j_{*}(H\mathbb{F}^{K}_{p}\wedge H\mathbb{F}^{K}_{p})\wedge j^{*}j_{*}(H\mathbb{F}^{K}_{p}\wedge H\mathbb{F}^{K}_{p}) \arrow[dl, "\epsilon \wedge \epsilon"] \\
(H\mathbb{F}^{K}_{p}\wedge H\mathbb{F}^{K}_{p})\wedge(H\mathbb{F}^{K}_{p}\wedge H\mathbb{F}^{K}_{p})
\end{tikzcd}
\]
Thus, by adjunction, the triangle \ref{eq:triangle} commutes.

\indent Applying $i^{*}$ to triangle \ref{eq:triangle}, we then see that the commuting diagram \ref{eq:ringhomspectra1diag} is a sub-diagram of the commuting diagram 
\begin{equation} \label{eq:bigcd}
\begin{tikzcd}
(H\mathbb{F}^{k}_{p}\wedge H\mathbb{F}^{k}_{p})\wedge(H\mathbb{F}^{k}_{p}\wedge H\mathbb{F}^{k}_{p}) \arrow[r, "\mu^{k}_{2}"] \arrow[d, "i^{*}\eta \wedge i^{*}\eta"]
& H\mathbb{F}^{k}_{p}\wedge H\mathbb{F}^{k}_{p} \arrow[dd, "i^{*}\eta"] \\
i^{*}j_{*}(H\mathbb{F}^{K}_{p}\wedge H\mathbb{F}^{K}_{p})\wedge i^{*}j_{*}(H\mathbb{F}^{K}_{p}\wedge H\mathbb{F}^{K}_{p}) \arrow[d, ""]  \\
i^{*}j_{*}(H\mathbb{F}^{K}_{p}\wedge H\mathbb{F}^{K}_{p} \wedge H\mathbb{F}^{K}_{p}\wedge H\mathbb{F}^{K}_{p}) \arrow[r, "i^{*}j_{*}\mu^{K}_{2}"]
& i^{*}j_{*}(H\mathbb{F}^{K}_{p}\wedge H\mathbb{F}^{K}_{p}) \arrow[d, "\oplus \pi"] \\
& \bigoplus\limits _{\alpha} \Sigma ^{p_{\alpha},q_{\alpha}}H\mathbb{F}^{k}_{p}.
\end{tikzcd}
\end{equation}

\indent From diagram \ref{PsiDcommute}, $$(i^{*}\eta \wedge i^{*}\eta)\circ (\Psi_{k}\wedge \Psi_{k}):(\bigoplus\limits _{\alpha} \Sigma ^{p_{\alpha},q_{\alpha}}H\mathbb{F}^{k}_{p}) \wedge (\bigoplus\limits _{\alpha} \Sigma ^{p_{\alpha},q_{\alpha}}H\mathbb{F}^{k}_{p}) \to i^{*}j_{*}(H\mathbb{F}^{K}_{p}\wedge H\mathbb{F}^{K}_{p}) \wedge i^{*}j_{*}(H\mathbb{F}^{K}_{p}\wedge H\mathbb{F}^{K}_{p})$$ is equal to the composite $(i^{*}j_{*}\Psi_{K} \wedge i^{*}j_{*}\Psi_{K}) \circ (i^{*}\eta \wedge i^{*}\eta).$ Hence, diagram \ref{eq:bigcd} implies that the multiplication morphism $m=r \circ \mu_{2}^{k} \circ (\Psi_{k} \wedge \Psi_{k})$ on $$(\bigoplus\limits _{\alpha} \Sigma ^{p_{\alpha},q_{\alpha}}H\mathbb{F}^{k}_{p}) \wedge (\bigoplus\limits _{\alpha} \Sigma ^{p_{\alpha},q_{\alpha}}H\mathbb{F}^{k}_{p})$$ is equal to the following composite.

\begin{equation}   \label{compositeform}
\begin{tikzcd}
(\bigoplus\limits _{\alpha} \Sigma ^{p_{\alpha},q_{\alpha}}H\mathbb{F}^{k}_{p}) \wedge (\bigoplus\limits _{\alpha} \Sigma ^{p_{\alpha},q_{\alpha}}H\mathbb{F}^{k}_{p}) \arrow[d, "((i^{*}j_{*}\Psi_{K})\circ i^{*}\eta) \wedge ((i^{*}j_{*}\Psi_{K})\circ i^{*}\eta)"] \\
i^{*}j_{*}(H\mathbb{F}^{K}_{p}\wedge H\mathbb{F}^{K}_{p})\wedge i^{*}j_{*}(H\mathbb{F}^{K}_{p}\wedge H\mathbb{F}^{K}_{p}) \arrow[d, ""]  \\
i^{*}j_{*}(H\mathbb{F}^{K}_{p}\wedge H\mathbb{F}^{K}_{p} \wedge H\mathbb{F}^{K}_{p}\wedge H\mathbb{F}^{K}_{p}) \arrow[r, "i^{*}j_{*}\mu^{K}_{2}"]
& i^{*}j_{*}(H\mathbb{F}^{K}_{p}\wedge H\mathbb{F}^{K}_{p}) \arrow[d, "\oplus \pi"] \\
& \bigoplus\limits _{\alpha} \Sigma ^{p_{\alpha},q_{\alpha}}H\mathbb{F}^{k}_{p}
\end{tikzcd}
\end{equation}

\indent From Lemma \ref{lemmafornexthm}, the composites 

\[ 
\begin{tikzcd}
 \Sigma^{\infty}_{+}X \arrow[r, "f"]
 & \Sigma^{2m,m}H\mathbb{F}_{p}^{k} \arrow[r, "i_{R}"]
 & \Sigma^{2m,m}H\mathbb{F}^{k}_{p}\wedge H\mathbb{F}^{k}_{p} \arrow[r, "i^{*}\eta"]
 & \Sigma^{2m,m}i^{*}j_{*}(H\mathbb{F}^{K}_{p}\wedge H\mathbb{F}^{K}_{p}) \arrow[d, "i^{*}j_{*}\Psi^{-1}_{K}"]\\
 & & & \bigoplus\limits _{\alpha} \Sigma^{p_{\alpha}+2m,q_{\alpha}+m}i^{*}j_{*}H\mathbb{F}^{K}_{p} \arrow[d, "\oplus \pi_{0}"] \\
 & & & \bigoplus\limits _{\alpha} \Sigma^{p_{\alpha}+2m-1,q_{\alpha}+m-1}H\mathbb{F}^{k}_{p}
\end{tikzcd}
\]

and 

\[ 
\begin{tikzcd}
 \Sigma^{\infty}_{+}X \arrow[r, "g"]
 & \Sigma^{2n,n}H\mathbb{F}_{p}^{k} \arrow[r, "i_{R}"]
 & \Sigma^{2n,n}H\mathbb{F}^{k}_{p}\wedge H\mathbb{F}^{k}_{p} \arrow[r, "i^{*}\eta"]
 & \Sigma^{2n,n}i^{*}j_{*}(H\mathbb{F}^{K}_{p}\wedge H\mathbb{F}^{K}_{p}) \arrow[d, "i^{*}j_{*}\Psi^{-1}_{K}"]\\
 & & & \bigoplus\limits _{\alpha} \Sigma^{p_{\alpha}+2n,q_{\alpha}+n}i^{*}j_{*}H\mathbb{F}^{K}_{p} \arrow[d, "\oplus \pi_{0}"] \\
 & & & \bigoplus\limits _{\alpha} \Sigma^{p_{\alpha}+2n-1,q_{\alpha}+n-1}H\mathbb{F}^{k}_{p}
\end{tikzcd}
\]

are equal to $0$. It follows that $i^{*}\eta \circ r \circ i_{R} \circ f=i^{*}\eta \circ i_{R}\circ f$ and $i^{*}\eta \circ r \circ i_{R} \circ g=i^{*}\eta \circ i_{R}\circ g$ in the following two diagrams.
\begin{equation} \label{diagX1}
\begin{tikzcd}
& \Sigma^{\infty}_{+}X  \arrow[d, "f"] \\
& \Sigma^{2m,m}H\mathbb{F}_{p}^{k} \arrow[d, "i_{R}"] \\ 
& \Sigma^{2m,m}H\mathbb{F}^{k}_{p}\wedge H\mathbb{F}^{k}_{p} \arrow[dl, "r"] \arrow[d, "i^{*}\eta"] \\
\bigoplus\limits _{\alpha} \Sigma^{p_{\alpha}+2m,q_{\alpha}+m}H\mathbb{F}^{k}_{p} \arrow[r, "i^{*}\eta"]
& \Sigma^{2m,m}i^{*}j_{*}(H\mathbb{F}^{K}_{p}\wedge H\mathbb{F}^{K}_{p})
\end{tikzcd}
\end{equation} 

\begin{equation} \label{diagX2}
\begin{tikzcd}
& \Sigma^{\infty}_{+}X  \arrow[d, "g"] \\
& \Sigma^{2n,n}H\mathbb{F}_{p}^{k} \arrow[d, "i_{R}"] \\ 
& \Sigma^{2n,n}H\mathbb{F}^{k}_{p}\wedge H\mathbb{F}^{k}_{p} \arrow[dl, "r"] \arrow[d, "i^{*}\eta"] \\
\bigoplus\limits _{\alpha} \Sigma^{p_{\alpha}+2n,q_{\alpha}+n}H\mathbb{F}^{k}_{p} \arrow[r, "i^{*}\eta"]
& \Sigma^{2n,n}i^{*}j_{*}(H\mathbb{F}^{K}_{p}\wedge H\mathbb{F}^{K}_{p})
\end{tikzcd}
\end{equation}

\indent To show that $r_{*}(i_{R *}(f)i_{R *}(g))=r_{*}(i_{R *}(f))r_{*}(i_{R *}(g))$, we consider the following commuting diagram where $\Delta$ is the diagonal morphism.

\begin{equation}  \label{diagramX}
\begin{tikzcd}
 \Sigma^{\infty}_{+}X \arrow[d, "\Delta"] \\
 \Sigma^{\infty}_{+}X \wedge \Sigma^{\infty}_{+}X \arrow[d, "f \wedge g"] \\
  \Sigma^{2m,m}H\mathbb{F}_{p}^{k} \wedge \Sigma^{2n,n}H\mathbb{F}_{p}^{k} \arrow[d, "i_{R}\wedge i_{R}"]\\
 (\Sigma^{2m,m}H\mathbb{F}^{k}_{p}\wedge H\mathbb{F}^{k}_{p})\wedge(\Sigma^{2n,n}H\mathbb{F}^{k}_{p}\wedge H\mathbb{F}^{k}_{p}) \arrow[d, "i^{*}\eta \wedge i^{*}\eta"] \arrow[r, "\mu_{2}^{k}"] & \Sigma^{2m,m}H\mathbb{F}^{k}_{p}\wedge \Sigma^{2n,n}H\mathbb{F}^{k}_{p} \arrow[dd, "i^{*}\eta"] \\
 i^{*}j_{*}(\Sigma^{2m,m}H\mathbb{F}^{K}_{p}\wedge H\mathbb{F}^{K}_{p})\wedge i^{*}j_{*}(\Sigma^{2n,n}H\mathbb{F}^{K}_{p}\wedge H\mathbb{F}^{K}_{p}) \arrow[d, ""]\\
 i^{*}j_{*}(\Sigma^{2(m+n),m+n}H\mathbb{F}^{K}_{p}\wedge H\mathbb{F}^{K}_{p} \wedge H\mathbb{F}^{K}_{p}\wedge H\mathbb{F}^{K}_{p}) \arrow[r, "i^{*}j_{*}\mu_{2}^{K}"]
& i^{*}j_{*}(\Sigma^{2(m+n),m+n}H\mathbb{F}^{K}_{p}\wedge H\mathbb{F}^{K}_{p}) \arrow[d, "\oplus \pi"] \\
 & \bigoplus\limits _{\alpha} \Sigma ^{p_{\alpha}+2m+2n,q_{\alpha}+m+n}H\mathbb{F}^{k}_{p}
\end{tikzcd}
\end{equation}
The composite $\oplus \pi \circ i^{*}\eta \circ \mu_{2}^{k} \circ (i_{R}\wedge i_{R}) \circ (f \wedge g)\circ \Delta$ in this diagram is equal to $r_{*}(i_{R *}(f)i_{R *}(g))$. From diagrams \ref{diagX1} and \ref{diagX2}, the composite given by 

\begin{equation}  \label{lastidagcoaction}
\begin{tikzcd}
 \Sigma^{\infty}_{+}X \arrow[d, "\Delta"] \\
 \Sigma^{\infty}_{+}X \wedge \Sigma^{\infty}_{+}X \arrow[d, "f \wedge g"] \\
  \Sigma^{2m,m}H\mathbb{F}_{p}^{k} \wedge \Sigma^{2n,n}H\mathbb{F}_{p}^{k} \arrow[d, "i_{R}\wedge i_{R}"]\\
 (\Sigma^{2m,m}H\mathbb{F}^{k}_{p}\wedge H\mathbb{F}^{k}_{p})\wedge(\Sigma^{2n,n}H\mathbb{F}^{k}_{p}\wedge H\mathbb{F}^{k}_{p}) \arrow[d, "r \wedge r"] \\
 (\bigoplus\limits _{\alpha} \Sigma^{p_{\alpha}+2m,q_{\alpha}+m}H\mathbb{F}^{k}_{p}) \wedge (\bigoplus\limits _{\alpha} \Sigma^{p_{\alpha}+2n,q_{\alpha}+n}H\mathbb{F}^{k}_{p}) \arrow[d, "i^{*}\eta \wedge i^{*}\eta"]\\ 
 i^{*}j_{*}(\Sigma^{2m,m}H\mathbb{F}^{K}_{p}\wedge H\mathbb{F}^{K}_{p})\wedge i^{*}j_{*}(\Sigma^{2n,n}H\mathbb{F}^{K}_{p}\wedge H\mathbb{F}^{K}_{p}) \arrow[d, ""]\\
 i^{*}j_{*}(\Sigma^{2(m+n),m+n}H\mathbb{F}^{K}_{p}\wedge H\mathbb{F}^{K}_{p} \wedge H\mathbb{F}^{K}_{p}\wedge H\mathbb{F}^{K}_{p}) \arrow[r, "i^{*}j_{*}\mu_{2}^{K}"]
& i^{*}j_{*}(\Sigma^{2(m+n),m+n}H\mathbb{F}^{K}_{p}\wedge H\mathbb{F}^{K}_{p}) \arrow[d, "\oplus \pi"] \\
 & \bigoplus\limits _{\alpha} \Sigma ^{p_{\alpha}+2m+2n,q_{\alpha}+m+n}H\mathbb{F}^{k}_{p}
\end{tikzcd}
\end{equation}
is equal to the composite given by diagram \ref{diagramX}. From diagram \ref{compositeform}, the composite given by diagram \ref{lastidagcoaction} is equal to $\Sigma^{2(m+n),m+n}m\circ (r\wedge r)\circ (i_{R}\wedge i_{R})\circ (f \wedge g) \circ \Delta=r_{*}(i_{R *}(f))r_{*}(i_{R *}(g))$. Thus, $r_{*}(i_{R *}(f)i_{R *}(g))=r_{*}(i_{R *}(f))r_{*}(i_{R *}(g))$ as desired.

\end{proof}

\section{Cartan formula} \label{sectioncartan}

\indent In this section, we use the coaction map constructed in the previous section to prove a Cartan formula for the operations $P^{n}_{k}$ restricted to mod $p$ Chow groups. Let $X \in \textup{Sm}_{k}$. Let $\langle \cdot,\cdot \rangle$ denote the pairing between $\mathcal{A}^{k}_{*,*}$ and $H\mathbb{F}^{k \, *,*}_{p}H\mathbb{F}^{k}_{p}$. Let $n \geq 0$. For $x \in H^{*,*}(X,\mathbb{F}_{p})$ with $\lambda_{X}(x)= \Sigma y_{i} \otimes x_{i}$, we have $P^{n}_{k}(x)=\Sigma \langle y_{i},P^{n}_{k}\rangle x_{i}$.

\begin{proposition}
Let $x,y\in CH^{*}(X)/p$ and $i \geq0$. Then $$P^{i}_{k}(xy)=\sum_{j=0}^{i}P^{j}_{k}(x)P^{i-j}_{k}(y).$$ 
\end{proposition}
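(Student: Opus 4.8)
The plan is to read the Cartan formula off from the multiplicativity of the coaction map $\lambda_{X}$ on mod $p$ Chow groups, by comparing, on the two sides of the identity $\lambda_{X}(xy)=\lambda_{X}(x)\lambda_{X}(y)$, the coefficient of the basis vector indexed by the sequence $(0,i,0,\dots)$.

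First I would make $\lambda_{X}$ explicit. By Proposition~\ref{P:coactidentitensor} the group $\pi_{-*,-*}(\bigoplus_{\alpha}\Sigma^{p_{\alpha},q_{\alpha}}H\mathbb{F}^{k}_{p})\otimes_{\pi_{-*,-*}H\mathbb{F}^{k}_{p}}H^{*,*}(X,\mathbb{F}_{p})$ is a free $H^{*,*}(X,\mathbb{F}_{p})$-module on the classes $e_{\alpha}\coloneqq r_{*}(\omega(\alpha))$, one per sequence $\alpha$, so every element has a unique expansion $\sum_{\alpha}e_{\alpha}\otimes(-)$; extracting the $e_{(0,i,0,\dots)}$-coefficient is exactly the pairing with $P^{i}_{k}$ recalled before the statement. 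Unwinding $\lambda_{X}=r_{*}\circ i_{R\,*}$ together with the description of the retraction $r$, whose $\alpha$-component is the morphism $i^{*}\eta_{\alpha}$ of~\ref{eq:retractexplicit r}, and the definition $P^{\alpha}_{k}=i^{*}\eta_{\alpha}\circ i_{R}$, one finds
\[
\lambda_{X}(x)\;=\;\sum_{\alpha}e_{\alpha}\otimes P^{\alpha}_{k}(x),
\]
whose $e_{(0,j,0,\dots)}$-coefficient is $P^{j}_{k}(x)$ since $P^{(0,j,0,\dots)}_{k}=P^{j}_{k}$ by definition.

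Next I would pin down the product $e_{\alpha}e_{\beta}$ in $\pi_{-*,-*}(\bigoplus_{\alpha}\Sigma^{p_{\alpha},q_{\alpha}}H\mathbb{F}^{k}_{p})$ coming from $m=r\circ\mu^{k}_{2}\circ(\Psi_{k}\wedge\Psi_{k})$. Because $r_{*}\circ\Psi_{k\,*}=\mathrm{id}$ and $\Psi_{k\,*}(e_{\gamma})=\omega(\gamma)$, the product $e_{\alpha}e_{\beta}$ is $r_{*}$ applied to the product $\omega(\alpha)\cdot\omega(\beta)$ taken in $\mathcal{A}^{k}_{*,*}$. The monomials $\omega(\gamma)$ are linearly independent (Theorem~\ref{T:Psidef}), the $\xi_{j}$ sit in even first degree, and $\tau_{i}^{2}=0$ (Proposition~\ref{P:tau2=0}), so $\omega(\alpha)\cdot\omega(\beta)=\pm\omega(\alpha+\beta)$ if no exterior generator is repeated and is $0$ otherwise; hence $e_{\alpha}e_{\beta}=\pm e_{\alpha+\beta}$ or $0$. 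In particular $e_{\alpha}e_{\beta}$ has nonzero $e_{(0,i,0,\dots)}$-coefficient only when $\omega(\alpha)\omega(\beta)=\xi_{1}^{i}$; by unique factorization of monomials this forces $\omega(\alpha)=\xi_{1}^{j}$, $\omega(\beta)=\xi_{1}^{i-j}$ for some $0\le j\le i$, in which case the sign is $+1$ (as $\xi_{1}$ commutes with itself) and $P^{\alpha}_{k}=P^{j}_{k}$, $P^{\beta}_{k}=P^{i-j}_{k}$.

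Finally, for $x,y\in CH^{*}(X)/p$, Proposition~\ref{P:homringspec} gives $\lambda_{X}(xy)=\lambda_{X}(x)\lambda_{X}(y)=\sum_{\alpha,\beta}(e_{\alpha}e_{\beta})\otimes(P^{\alpha}_{k}(x)P^{\beta}_{k}(y))$, where for the terms feeding the $e_{(0,i,0,\dots)}$-coefficient no Koszul signs appear, since all the classes involved lie in even first degree. Comparing $e_{(0,i,0,\dots)}$-coefficients via uniqueness of the expansion, the left side contributes $P^{i}_{k}(xy)$ and the right side contributes $\sum_{j=0}^{i}P^{j}_{k}(x)P^{i-j}_{k}(y)$, which is the claimed identity. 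I expect the main obstacle to lie in the two bookkeeping steps: identifying the $\alpha$-components of $\lambda_{X}$ with the operations $P^{\alpha}_{k}$, and computing $e_{\alpha}e_{\beta}$ inside the only partially understood ring $\pi_{*,*}(\bigoplus_{\alpha}\Sigma^{p_{\alpha},q_{\alpha}}H\mathbb{F}^{k}_{p})$; once those are settled the Cartan formula drops out, and note that it appears with no binomial coefficients precisely because it is governed by the \emph{product} $\xi_{1}^{j}\cdot\xi_{1}^{i-j}=\xi_{1}^{i}$ rather than by a coproduct.
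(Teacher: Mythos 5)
Your proposal is correct and follows essentially the same route as the paper: both use the multiplicativity of $\lambda_{X}$ on mod $p$ Chow groups (Proposition \ref{P:homringspec}), the relations $\tau_{i}^{2}=0$ to reduce products $\omega(\alpha)\omega(\beta)$ to $\pm\omega(\alpha+\beta)$ or $0$, and the duality $\langle \xi_{1}^{i},P^{i}_{k}\rangle=1$ to see that only the factorizations $\xi_{1}^{j}\cdot\xi_{1}^{i-j}$ contribute. Your explicit identification $\lambda_{X}(x)=\sum_{\alpha}e_{\alpha}\otimes P^{\alpha}_{k}(x)$ is a slightly more systematic bookkeeping of what the paper does with its sums $\sum_{q}\omega(\alpha^{1}_{q})\otimes x_{q}$, but the argument is the same.
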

\begin{proof}
From the definition of $P^{i}_{k}$, $\langle \xi_{1}^{i}, P^{i}_{k}\rangle=1$ and $\langle\omega(\alpha), P^{i}_{k}\rangle=0$ for all sequences $\alpha \neq (0,i,0,0,\ldots)$. Using the coaction map \ref{coacitonmapdef}, we write $$\lambda_{X}(x)=\sum_{q} \omega(\alpha^{1}_{q}) \otimes x_{q}$$ and $$\lambda_{X}(y)=\sum_{r} \omega(\alpha^{2}_{r}) \otimes y_{r}$$ for some sequences $\alpha^{1}_{q}, \alpha^{2}_{r}$. Then $$\lambda_{X}(xy)=\sum_{q, r} ((\omega(\alpha^{1}_{q}) \omega(\alpha^{2}_{r}) \otimes x_{q}y_{r}).$$ For any $2$ sequences $\alpha^{1}_{q}, \alpha^{2}_{r}$ appearing in these sums, we have $\omega(\alpha^{1}_{q}) \omega(\alpha^{2}_{r})=0$ if the relation $\tau_{m}^{2}=0$ from Proposition \ref{P:tau2=0} applies for some $m \geq0$, or else $\omega(\alpha^{1}_{q}) \omega(\alpha^{2}_{r})=\pm \omega(\alpha^{1}_{q}+\alpha^{2}_{r})$.

\indent From the definition of $\lambda_{X}$, $$P^{i}_{k}(xy)=\sum_{q, r} \langle (\omega(\alpha^{1}_{q}) \omega(\alpha^{2}_{r}), P^{i}_{k}\rangle x_{q}y_{r}.$$ Proposition \ref{P:tau2=0} implies that if $\omega(\alpha_{1}) \omega(\alpha_{2})=a\xi_{1}^{i}$ for two sequences $\alpha_{1}, \alpha_{2}$ and $a\neq 0 \in H^{*,*}(k, \mathbb{F}_{p})$, then $a=1$ and $\omega(\alpha_{1})=\xi_{1}^{j}, \, \omega(\alpha_{2})=\xi_{1}^{i-j}$ for some $0 \leq j \leq i$. As $P^{i}_{k}$ is dual to $\xi_{1}^{i}$, the only terms for which $\langle\omega(\alpha^{1}_{q}+\alpha^{2}_{r}), P^{i}_{k}\rangle\neq 0$ are 
of the form $\omega(\alpha^{1}_{q_{j}})=\xi_{1}^{j}, \omega(\alpha^{2}_{r_{j}})=\xi_{1}^{i-j}$ for $0 \leq j \leq i$. Hence, 
\begin{equation} \label{eq:cartanform1} P^{i}_{k}(xy)=\sum_{j=0}^{i} \langle\omega(\alpha^{1}_{q_{j}}+\alpha^{2}_{r_{j}}), P^{i}_{k}\rangle x_{q_{j}}y_{r_{j}}=\sum_{j=0}^{i}P^{j}_{k}(x)P^{i-j}_{k}(y) \end{equation} 
as required.

\end{proof}

\section{$p$th power and instability}

\indent In this section, for $n \in \mathbb{N}$, we prove that $P^{n}_{k}$ is the $p$th power on $CH^{n}(-)/p$. Letting $f:\textup{Spec}(k) \to \textup{Spec}(\mathbb{F}_{p})$ denote the structure map, it suffices to prove that $f^{*}(P^{n}_{\mathbb{F}_{p}})(\iota_{n})=\iota^{p}_{n}$ for the canonical element $\iota_{n} \in H^{2n,n}(K_{n,k},\mathbb{F}_{p})$ where $K_{n,k} \in H(k)$ is the motivic Eilenberg-MacLane space representing $H^{2n,n}(-, \mathbb{F}_{p})$. Our proof makes use of Morel's $S^1$-recognition principle. 

\indent We refer to \cite[Section 3]{EHK+} as a reference for the $S^1$-recognition principle. For a base scheme $S$, let $\textrm{PSh}_{\textrm{nis}}(\textrm{Sm}_{S})$ denote the category of Nisnevich local presheaves of spaces on $\textrm{Sm}_{S}$. The unstable motivic homotopy category $H(S)$ can be described as the full subcategory of $\textrm{PSh}_{\textrm{nis}}(\textrm{Sm}_{S})$ of presheaves that are $\mathbb{A}^{1}$-invariant. Let $L_{\textrm{mot}}:\textrm{PSh}_{\textrm{nis}}(\textrm{Sm}_{S}) \to H(S)$ denote the $\mathbb{A}^{1}$-localization functor. Let $SH^{S^{1}}(S)$ denote the  stable motivic homotopy category of $S^{1}$-spectra. For a morphism $f: S_{1} \to S_{2}$ of base schemes, we have the adjoint functors of pullback $f^{*} \coloneqq Lf^{*}$ and pushforward $f_{*} \coloneqq Rf_{*}$: $$f^{*}:H(S_{2}) \rightleftarrows H(S_{2}): f_{*}.$$ For $f: S_{1} \to S_{2}$ smooth, $f^{*}$ admits a left adjoint $f_{\#}$ such that $f_{\#}(X)=X \in H(S_{2})$ for any $X \in \textrm{Sm}_{S_{1}}.$

\indent For $C=\textrm{PSh}_{\textrm{nis}}(\textrm{Sm}_{S})$ or $H(S)$, we consider the $n$-fold bar constructions $\textrm{B}^{n}_{C}$ that are adjoint to the $n$th $S^1$-deloopings $\Omega^{n}$:$$\textrm{B}^{n}_{C}: \textrm{Mon}_{\mathcal{E}_{n}}(C) \rightleftarrows C: \Omega^{n}.$$ For $C=\textrm{PSh}_{\textrm{nis}}(\textrm{Sm}_{S})$ or $H(S)$, we let $\textrm{Stab}(C) \coloneqq C \otimes \textrm{Spt}$ denote the $S^1$-stabilization of $C$.  We also consider the infinite bar construction $$\textrm{B}_{C}^{\infty}: \textrm{CMon}(C)=\textrm{Mon}_{\mathcal{E}_{\infty}}(C)\rightleftarrows \textrm{Stab}(C): \Omega^{\infty}.$$ For $C=\textrm{PSh}_{\textrm{nis}}(\textrm{Sm}_{S})$, we denote $\textrm{B}^{n}_{C}$ by $\textrm{B}^{n}_{\textrm{nis}}$ and we denote $\textrm{B}^{\infty}_{C}$ by $\textrm{B}^{\infty}_{\textrm{nis}}$. Similarly, for $C=H(S)$, we denote $\textrm{B}^{n}_{C}$ by $\textrm{B}^{n}_{\textrm{mot}}$ and we denote $\textrm{B}^{\infty}_{C}$ by $\textrm{B}^{\infty}_{\textrm{mot}}$. For later use, we note that $\textrm{B}^{n}_{\textrm{nis}}$ and $\textrm{B}^{\infty}_{\textrm{nis}}$ commute with pullbacks.

\begin{definition} Define $X \in \textup{Mon}(H(S))$ to be strongly $\mathbb{A}^{1}$-invariant if $\textup{B}_{\textup{nis}}X \simeq \textup{B}_{\textup{mot}}X$. Define $X \in \textup{CMon}(H(S))$ to be strictly $\mathbb{A}^{1}$-invariant if $\textup{B}^{n}_{\textup{nis}}X \simeq \textup{B}^{n}_{\textup{mot}}X$ for all $n\geq0$.
\end{definition}

\indent Most of the proof of the following proposition was suggested to us by Marc Hoyois.

\begin{proposition} \label{T:pullbaclEB}
Let $k$ be a perfect field of characteristic $p$ and let $i:\textrm{Spec}(k) \to \textrm{Spec}(D)$ be a closed embedding where $D$ is a complete unramified DVR with generic point $j: \textrm{Spec}(K) \to \textrm{Spec}(D)$. Fix $n >0$. We let $K_{n,D} \coloneqq \Omega^{\infty}_{\mathbb{P}^{1}}\Sigma^{2n,n}\widehat{H}\mathbb{F}^{D}_{p}$. Then the morphism $i^{*}K_{n,D} \to K_{n,k}$ induced by $i^{*} \Sigma^{2n,n}\widehat{H}\mathbb{F}_{p}^{D} \cong \Sigma^{2n,n}H\mathbb{F}_{p}^{k}$ is an isomorphism in $H(k)$.
\end{proposition}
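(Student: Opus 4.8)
\emph{Strategy.} The plan is to follow the strategy suggested by Hoyois and reduce the statement to the $S^{1}$-recognition principle of \cite[Section 3]{EHK+}, exploiting that the Nisnevich bar constructions commute with pullback. Write $E_{D}\coloneqq\Sigma^{2n,n}\widehat H\mathbb{F}^{D}_{p}$ and $E_{k}\coloneqq\Sigma^{2n,n}H\mathbb{F}^{k}_{p}$; since $\widehat H\mathbb{Z}$ is Cartesian there is a canonical isomorphism $i^{*}E_{D}\cong E_{k}$ in $SH(k)$, and the morphism in the statement is obtained by applying $\Omega^{\infty}_{\mathbb{P}^{1}}$ to it, so it suffices to show that $\Omega^{\infty}_{\mathbb{P}^{1}}$ commutes with $i^{*}$ on $E_{D}$. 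First I would note that $\widehat H\mathbb{Z}^{D}$, hence $\widehat H\mathbb{F}^{D}_{p}$, is connective for the homotopy $t$-structure on $SH(D)$ and that $\Sigma^{2n,n}=\Sigma^{n}_{\mathbb{P}^{1}}$ preserves connectivity, so $E_{D}$ is connective. Factoring $\Omega^{\infty}_{\mathbb{P}^{1}}=\Omega^{\infty}\circ\omega^{\infty}_{\mathbb{G}_{m}}$, with $\omega^{\infty}_{\mathbb{G}_{m}}$ the passage to the underlying motivic $S^{1}$-spectrum, I would set $\mathcal{E}_{D}\coloneqq\omega^{\infty}_{\mathbb{G}_{m}}E_{D}\in SH^{S^{1}}(D)$, a connective $S^{1}$-spectrum, and $M_{D}\coloneqq\Omega^{\infty}\mathcal{E}_{D}\in\mathrm{CMon}(H(D))$, so that $K_{n,D}$ is the underlying space of $M_{D}$. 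By the recognition principle $M_{D}$ is grouplike and strictly $\mathbb{A}^{1}$-invariant with $\mathrm B^{\infty}_{\mathrm{mot}}M_{D}\simeq\mathcal{E}_{D}$; the analogous objects $\mathcal{E}_{k},M_{k}$ over $k$ satisfy $i^{*}\mathcal{E}_{D}\simeq\mathcal{E}_{k}$, using the isomorphism $i^{*}E_{D}\cong E_{k}$ together with a $\mathbb{G}_{m}$-stable connectivity statement over $D$ guaranteeing that $i^{*}$ commutes with $\omega^{\infty}_{\mathbb{G}_{m}}$ on connective objects.

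\emph{Comparison.} It then suffices to prove that the canonical map $i^{*}M_{D}\to M_{k}$ in $\mathrm{CMon}(H(k))$ obtained by applying $\Omega^{\infty}_{\mathbb{P}^{1}}$ to $i^{*}E_{D}\cong E_{k}$ is an equivalence; taking underlying spaces then produces the morphism in the statement and shows it is an isomorphism. Since $i^{*}$ preserves finite products, $i^{*}M_{D}$ lies in $\mathrm{CMon}(H(k))$ with underlying space $i^{*}K_{n,D}$, and $i^{*}$ commutes with $\mathrm B^{m}_{\mathrm{nis}}$ and $\mathrm B^{\infty}_{\mathrm{nis}}$ as recorded above. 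By strict $\mathbb{A}^{1}$-invariance of $M_{D}$ and $M_{k}$ we have $\mathrm B^{m}_{\mathrm{mot}}\simeq\mathrm B^{m}_{\mathrm{nis}}$ on each, and a map of $\mathcal{E}_{\infty}$-monoids inducing equivalences on all $\mathrm B^{m}_{\mathrm{nis}}$ is an equivalence; so it is enough to check that $\mathrm B^{m}_{\mathrm{nis}}(i^{*}M_{D})\simeq i^{*}\mathrm B^{m}_{\mathrm{nis}}(M_{D})\to\mathrm B^{m}_{\mathrm{nis}}(M_{k})$ is an equivalence for every $m$. This map is identified, on $m$-th spaces, with the comparison $i^{*}\big((\mathcal{E}_{D})_{m}\big)\to(\mathcal{E}_{k})_{m}$ induced by $i^{*}\mathcal{E}_{D}\simeq\mathcal{E}_{k}$, and its being an equivalence comes down to the geometric input: $D$ being complete with residue field $k$, every smooth $k$-scheme is Nisnevich-locally the special fibre of a smooth $D$-scheme, so that $i^{*}$ of the $\mathbb{A}^{1}$-invariant Nisnevich sheaf $(\mathcal{E}_{D})_{m}$ is computed by restriction along smooth lifts and agrees with $(\mathcal{E}_{k})_{m}$. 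Unwinding the identifications shows the resulting equivalence $i^{*}K_{n,D}\simeq K_{n,k}$ is the stated morphism.

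\emph{Main obstacle.} I expect the difficulty to be concentrated in passing over the discrete valuation ring $D$ rather than over a field: one needs the $\mathbb{G}_{m}$-stable connectivity statement over $D$, in the spirit of Morel's stable connectivity theorem, so that $i^{*}$ commutes with $\omega^{\infty}_{\mathbb{G}_{m}}$ on connective objects; one needs $M_{D}$ to be genuinely strictly $\mathbb{A}^{1}$-invariant over $D$ so that the bar-construction reconstruction of the first paragraph is available and pullback-compatible; and one needs the smooth-lifting input, which is where the hypothesis that $k$ is perfect enters (it is also used for $M_{k}$ to be strictly $\mathbb{A}^{1}$-invariant via Morel's theory over a perfect field). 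Over a perfect field all of these are theorems of Morel; supplying the analogues over $D$, along the lines suggested by Hoyois, is the real content of the proof.
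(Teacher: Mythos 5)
Your overall shape (deloop, apply the recognition principle, compare bar constructions using that they commute with pullback) is the intended one, but the proposal has two genuine gaps, and they sit exactly where the content of the proof lies. First, you invoke the $S^{1}$-recognition principle over $D$ itself --- to get that $M_{D}$ is grouplike and strictly $\mathbb{A}^{1}$-invariant with $\mathrm{B}^{\infty}_{\mathrm{mot}}M_{D}\simeq\mathcal{E}_{D}$ --- and you additionally need a $\mathbb{G}_{m}$-stable connectivity statement over $D$ so that $i^{*}$ commutes with $\omega^{\infty}_{\mathbb{G}_{m}}$. None of this is available over a DVR: Morel's theorems and \cite[Theorem 3.1.12]{EHK+} are proved over a (perfect) field. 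You flag this in your last paragraph as ``the real content,'' but the point of the actual argument is to avoid ever needing these statements over $D$. The paper applies the recognition principle only over $k$, to the two spaces $i^{*}K_{n,D}$ and $K_{n,k}$; for that it suffices that their $\pi_{0}^{\mathrm{nis}}$ be strongly $\mathbb{A}^{1}$-invariant, which is automatic once they are shown to be \emph{connected}. Proving connectivity of $i^{*}K_{n,D}$ is the key step you are missing: the paper first shows $K_{n,D}$ is connected using Geisser's vanishing $CH^{m}(R)=0$ for $m\geq 1$ and $R$ Henselian local, essentially smooth over $D$; then uses the gluing square to identify $L_{\mathrm{mot}}$ of the pushout of $\mathrm{Spec}(K)\leftarrow j_{\#}K_{n,K}\to K_{n,D}$ with $i_{*}i^{*}K_{n,D}$; and finally evaluates $i_{*}i^{*}K_{n,D}$ on Henselian local essentially smooth $k$-algebras by lifting them to $D$. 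With strict $\mathbb{A}^{1}$-invariance over $k$ in hand, the comparison of deloopings is the chain $\mathrm{B}^{\infty}_{\mathrm{mot}}i^{*}K_{n,D}\simeq i^{*}\mathrm{B}^{\infty}_{\mathrm{mot}}K_{n,D}\simeq i^{*}\Omega^{\infty}_{\mathbb{G}_{m}}\Sigma^{2n,n}\widehat{H}\mathbb{F}^{D}_{p}\simeq\Omega^{\infty}_{\mathbb{G}_{m}}\Sigma^{2n,n}H\mathbb{F}^{k}_{p}$, where the last equivalence is Spitzweck's Theorem 8.18 --- a specific base-change property of $\widehat{H}\mathbb{Z}^{D}$ --- rather than a general connectivity theorem over $D$; one then concludes with \cite[Corollary 3.1.15]{EHK+}.

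Second, your level-wise comparison is not a proof: you assert that $i^{*}\bigl((\mathcal{E}_{D})_{m}\bigr)\to(\mathcal{E}_{k})_{m}$ is an equivalence because ``$i^{*}$ of the $\mathbb{A}^{1}$-invariant Nisnevich sheaf $(\mathcal{E}_{D})_{m}$ is computed by restriction along smooth lifts.'' But $i^{*}$ on motivic spaces is a left Kan extension followed by Nisnevich and $\mathbb{A}^{1}$ localization; it is not computed by naive restriction, and controlling it is precisely the difficulty of the proposition. (This is why the paper routes the computation through $i_{*}i^{*}$ via the gluing square, and even then extracts only $\pi_{0}$ that way.) As written, this step essentially assumes the statement being proved.
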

\begin{proof}
We first prove that $K_{n, D}$ is connected. Let $R$ be a Henselian local ring that is essentially smooth over $D$. From \cite[Corollary 4.2]{Gei}, the Bloch-Levine Chow groups $CH^{m}(R)$ of $R$ vanish for $m \geq 1.$ Thus, $\pi_{0}^{\textup{nis}}(K_{n,D}(\textrm{Spec}(R))) \simeq * $ since $K_{n,D} \in H(D)$ represents the codimension $n$ mod $p$ Bloch-Levine Chow group. 

\indent Now we prove that $i^{*}K_{n, D}$ is connected. As $j:\textrm{Spec}(K) \to \textrm{Spec}(D)$ is smooth, $j^{*}K_{n,D} \simeq K_{n,K}$. Consider the homotopy pushout $P$ in $\textrm{PSh}_{\textrm{nis}}(\textrm{Sm}_{D})$ of the following diagram.
\[
\begin{tikzcd}
   j_{\#}K_{n,K} \arrow[r, ""] \arrow[d, ""]
   & K_{n,D}  \\
   \textrm{Spec}(K) 
 \end{tikzcd}
\]
The morphism $j_{\#}K_{n,K} \to \textrm{Spec}(K)$ induces a bijection on $\pi^{\textrm{nis}}_{0}$. Hence, $\pi^{\textrm{nis}}_{0}(K_{n,D}) \simeq \pi^{\textrm{nis}}_{0}(P)$. From the gluing square \cite[Theorem 2.21]{MorVoe}, $L_{\textrm{mot}}(P) \simeq i_{*}i^{*}(K_{n,D})$. From \cite[Corollary 3.22]{MorVoe}, it follows that $i_{*}i^{*}(K_{n,D})$ is connected since $K_{n,D}$ is connected. Let $k \to S_{k}$ be an essentially smooth homomorphism of rings where $S_{k}$ is Henselian local. The ring $S_{k}$ admits a lift $S_{D}$ where $D \to S_{D}$ is essentially smooth and $S_{D}$ is Henselian local. Hence, $i^{*}(K_{n,D})(S_{k})\simeq i_{*}i^{*}(K_{n,D})(S_{D})$ is connected. Thus, $i^{*}K_{n,D} \in H(k)$ is connected. In particular, $\pi^{\textrm{nis}}_{0}(i^{*}(K_{n,D}))$ is strongly $\mathbb{A}^{1}$-invariant. The $S^{1}$-recognition principle \cite[Theorem 3.1.12]{EHK+} then implies that $i^{*}K_{n,D}$ is strictly $\mathbb{A}^{1}$-invariant. Note that $K_{n,k}$ is also strictly $\mathbb{A}^{1}$-invariant since $\pi^{\textrm{nis}}_{0}(K_{n,k})$ is strongly $\mathbb{A}^{1}$-invariant.

\indent 
From \cite[Theorem 8.18]{Spi}, we have $$\textrm{B}^{\infty}_{\textup{mot}}i^{*}(K_{n,D})\cong i^{*}(\textrm{B}^{\infty}_{\textup{mot}}K_{n,D}) \cong i^{*}(\Omega^{\infty}_{\mathbb{G}_{m}} \Sigma^{2n,n}\widehat{H}\mathbb{F}^{D}_{p}) \cong \Omega^{\infty}_{\mathbb{G}_{m}} \Sigma^{2n,n}H\mathbb{F}^{k}_{p} \cong \textrm{B}^{\infty}_{\textup{mot}}K_{n,k}$$ in $SH^{S^{1}}(k)$. Then \cite[Corollary 3.1.15]{EHK+} implies that $i^{*}K_{n,D} \cong K_{n,k}$ in $H(k)$.
\end{proof}

\begin{proposition} \label{P:pthpower}
Let $k$ be a field of characteristic $p$ with structure map $f:\textup{Spec}(k)\to \textup{Spec}(\mathbb{F}_{p})$ and let $\iota_{n} \in H^{2n,n}(K_{n,k}, \mathbb{F}_{p})$ be the canonical element. Then $f^{*}P^{n}_{\mathbb{F}_{p}}(\iota_{n})=\iota^{p}_{n}$.
\end{proposition}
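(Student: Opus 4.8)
The plan is to compute $P^{n}_{\mathbb{F}_{p}}$ on the fundamental class directly through the discrete valuation ring $D=\mathbb{Z}_{p}$, using Proposition \ref{T:pullbaclEB} to control the special fibre of the representing space $K_{n,D}=\Omega^{\infty}_{\mathbb{P}^{1}}\Sigma^{2n,n}\widehat{H}\mathbb{F}^{D}_{p}$. Write $i\colon\mathrm{Spec}(\mathbb{F}_{p})\to\mathrm{Spec}(\mathbb{Z}_{p})$ and $j\colon\mathrm{Spec}(\mathbb{Q}_{p})\to\mathrm{Spec}(\mathbb{Z}_{p})$ for the closed and generic points. Since $f^{*}$ is symmetric monoidal and natural, and (by continuity of the motivic homotopy category, identifying $K_{n,k}$ with $f^{*}K_{n,\mathbb{F}_{p}}$ and $\iota_{n}$ with $f^{*}\iota_{n,\mathbb{F}_{p}}$ for the canonical class $\iota_{n,\mathbb{F}_{p}}\in H^{2n,n}(K_{n,\mathbb{F}_{p}},\mathbb{F}_{p})$) it suffices to prove $P^{n}_{\mathbb{F}_{p}}(\iota_{n,\mathbb{F}_{p}})=\iota_{n,\mathbb{F}_{p}}^{p}$; here $\mathbb{F}_{p}$ is perfect, so Proposition \ref{T:pullbaclEB} applies. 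I will view these canonical classes as counits of the $(\Sigma^{\infty}_{\mathbb{P}^{1}},\Omega^{\infty}_{\mathbb{P}^{1}})$-adjunction: $\iota_{n,\mathbb{F}_{p}}\colon\Sigma^{\infty}_{\mathbb{P}^{1}}K_{n,\mathbb{F}_{p}}\to\Sigma^{2n,n}H\mathbb{F}^{\mathbb{F}_{p}}_{p}$, and similarly $\iota_{n,D}\colon\Sigma^{\infty}_{\mathbb{P}^{1}}K_{n,D}\to\Sigma^{2n,n}\widehat{H}\mathbb{F}^{D}_{p}$ (the canonical Bloch--Levine class over $D$) and $\iota_{n,\mathbb{Q}_{p}}$ over $\mathbb{Q}_{p}$.

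First I would record the compatibility of these classes under restriction. Since $j$ is smooth, $j^{*}K_{n,D}\simeq K_{n,\mathbb{Q}_{p}}$, and Proposition \ref{T:pullbaclEB} gives $i^{*}K_{n,D}\simeq K_{n,\mathbb{F}_{p}}$; in each case the equivalence is the canonical comparison map $f^{*}\Omega^{\infty}_{\mathbb{P}^{1}}(-)\to\Omega^{\infty}_{\mathbb{P}^{1}}f^{*}(-)$, hence compatible with counits, so $i^{*}\iota_{n,D}=\iota_{n,\mathbb{F}_{p}}$ and $j^{*}\iota_{n,D}=\iota_{n,\mathbb{Q}_{p}}$. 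Since $i^{*}$ and $j^{*}$ are strongly monoidal they are ring homomorphisms on $\widehat{H}\mathbb{F}^{D}_{p}$-cohomology; writing $\iota_{n,D}^{p}\colon\Sigma^{\infty}_{\mathbb{P}^{1}}K_{n,D}\to\Sigma^{2np,np}\widehat{H}\mathbb{F}^{D}_{p}$ for the $p$-fold cup product, it follows that $i^{*}(\iota_{n,D}^{p})=\iota_{n,\mathbb{F}_{p}}^{p}$ and $j^{*}(\iota_{n,D}^{p})=\iota_{n,\mathbb{Q}_{p}}^{p}$.

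Now I would carry out the computation. By Theorem \ref{T:phiproperties}(2), $P^{n}_{\mathbb{F}_{p}}=\Phi(P^{n}_{\mathbb{Q}_{p}})=\pi\circ i^{*}j_{*}(P^{n}_{\mathbb{Q}_{p}})\circ i^{*}\eta$, where $P^{n}_{\mathbb{Q}_{p}}$ is Voevodsky's $n$-th reduced power operation. Applying this to $\iota_{n,\mathbb{F}_{p}}$ and using naturality of the unit $\eta\colon\mathrm{id}\to j_{*}j^{*}$ with respect to the morphism $\iota_{n,D}$ (then applying $i^{*}$), together with $i^{*}\iota_{n,D}=\iota_{n,\mathbb{F}_{p}}$ and $j^{*}\iota_{n,D}=\iota_{n,\mathbb{Q}_{p}}$, gives
\[ P^{n}_{\mathbb{F}_{p}}(\iota_{n,\mathbb{F}_{p}})=\pi\circ i^{*}j_{*}\bigl(P^{n}_{\mathbb{Q}_{p}}(\iota_{n,\mathbb{Q}_{p}})\bigr)\circ i^{*}\eta. \]
Over the characteristic-zero field $\mathbb{Q}_{p}$, a reduced power operation applied to a class of bidegree $(2n,n)$ is the $p$-th power (a standard property of Voevodsky's operations, and on $CH^{n}$ also proved by Brosnan \cite{Bro}); applied to $\iota_{n,\mathbb{Q}_{p}}$ --- which may be verified on smooth approximations, since $H^{*,*}(K_{n,\mathbb{Q}_{p}},\mathbb{F}_{p})$ is a limit of cohomology rings of smooth schemes --- this yields $P^{n}_{\mathbb{Q}_{p}}(\iota_{n,\mathbb{Q}_{p}})=\iota_{n,\mathbb{Q}_{p}}^{p}=j^{*}(\iota_{n,D}^{p})$. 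Substituting and applying naturality of $\eta$ once more, this time with respect to $\iota_{n,D}^{p}$, rewrites the right-hand side as $\pi\circ i^{*}\eta\circ i^{*}(\iota_{n,D}^{p})$; since $\pi\circ i^{*}\eta=\mathrm{id}$ by Theorem \ref{T:defofpi0andpi}, this equals $i^{*}(\iota_{n,D}^{p})=\iota_{n,\mathbb{F}_{p}}^{p}$, completing the proof.

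The one genuinely deep ingredient, Proposition \ref{T:pullbaclEB}, is already in hand, so the remainder is bookkeeping with the adjunction $(i^{*},i^{*}j_{*})$ and there is no single hard step. The points needing care are: that the equivalence $i^{*}K_{n,D}\simeq K_{n,\mathbb{F}_{p}}$ of Proposition \ref{T:pullbaclEB} is the canonical comparison map, so that it carries $\iota_{n,D}$ to $\iota_{n,\mathbb{F}_{p}}$ (and $\iota_{n,D}^{p}$ to $\iota_{n,\mathbb{F}_{p}}^{p}$); that one must pass through the $D$-level class $\iota_{n,D}^{p}$, not merely through $\iota_{n,\mathbb{Q}_{p}}^{p}$, in order to apply naturality of $\eta$; and that the assertion ``$P^{n}$ is the $p$-th power on $H^{2n,n}$'' is being applied over $\mathbb{Q}_{p}$ to the non-smooth representing object $K_{n,\mathbb{Q}_{p}}$, which is legitimate because the relevant cohomology is computed as a limit over smooth approximations.
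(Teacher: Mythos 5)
Your argument is correct and is essentially the paper's own proof: work over the complete DVR, use Proposition \ref{T:pullbaclEB} to identify the special fibre of $K_{n,D}$ and the canonical classes, apply the natural transformation $i^{*}\to i^{*}j_{*}j^{*}$ to both $\iota_{n,D}$ and $\iota_{n,D}^{p}$, invoke Voevodsky's computation $P^{n}_{K}(\iota_{n})=\iota_{n}^{p}$ in characteristic $0$ (\cite[Lemma 9.8]{Voe}), and conclude via $\pi\circ i^{*}\eta=\mathrm{id}$, with the non-perfect case handled by essentially smooth base change from $\mathbb{F}_{p}$. Your explicit check that the equivalence of Proposition \ref{T:pullbaclEB} matches up the canonical classes is a point the paper leaves implicit, but the route is the same.
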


\begin{proof}
First, we assume that $k$ is perfect. Let $D$ be a DVR having $k$ as a residue field with inclusion morphism $i:\textrm{Spec}(k) \to \textrm{Spec}(D)$ and generic point $j:\textrm{Spec}(K)\to \textrm{Spec}(D)$. From Proposition \ref{T:pullbaclEB}, $i^{*}K_{n,D} \cong K_{n,k}$. Over all base schemes $S$, we let $\iota_{n}$ denote the canonical element in $H^{2n,n}(K_{n,S}, \mathbb{F}_{p})$. Apply $i^{*} \to i^{*}j_{*}j^{*}$ to the natural morphism $\iota_{n}:\Sigma^{\infty}_{+}K_{n,D} \to \Sigma^{2n,n}\widehat{H}\mathbb{F}_{p}^{D}$ to get the following commuting square.

\[
\begin{tikzcd}
\Sigma^{\infty}_{+}K_{n,k} \arrow[r, "\iota_{n}"] \arrow[d, "i^{*}\eta"]
 & \Sigma^{2n,n}H\mathbb{F}_{p}^{k} \arrow[d, "i^{*}\eta"]\\
 i^{*}j_{*}\Sigma^{\infty}_{+}K_{n,K} \arrow[r, "i^{*}j_{*}\iota_{n}"] & \Sigma^{2n,n}i^{*}j_{*}H\mathbb{F}_{p}^{K}
 \end{tikzcd}
 \]  Apply $i^{*}\eta:i^{*} \to i^{*}j_{*}j^{*}$ to the morphism $\Sigma^{\infty}_{+}K_{n,D} \to \Sigma^{2pn,pn} \widehat{H}\mathbb{F}_{p}^{D}$ in $SH(D)$ corresponding to $\iota^{p}_{n}$ to get the commutative diagram 

\begin{equation}           \label{diagrampthpower}
\begin{tikzcd} 
\Sigma^{\infty}_{+}K_{n,k} \arrow[r, "\iota^{p}_{n}"] \arrow[d, "i^{*}\eta"]
& \Sigma^{2pn,pn} H\mathbb{F}_{p}^{k} \arrow[d, "i^{*}\eta"]\\
i^{*}j_{*}\Sigma^{\infty}_{+}K_{n,K} \arrow[r, "i^{*}j_{*}\iota^{p}_{n}"]
& i^{*}j_{*}\Sigma^{2pn,pn} H\mathbb{F}_{p}^{K} \arrow[r, "\pi"]& \Sigma^{2pn,pn} H\mathbb{F}_{p}^{k} .
\end{tikzcd}
\end{equation}
From \cite[Lemma 9.8]{Voe}, $i^{*}j_{*}\iota^{p}_{n}=i^{*}j_{*}P^{n}_{K}(\iota_{n})$. Hence, we can rewrite the bottom row of \ref{diagrampthpower} as 
\[
\begin{tikzcd}
i^{*}j_{*}\Sigma^{\infty}_{+}K_{n,K} \arrow[r,"i^{*}j_{*}\iota_{n}"] 
& i^{*}j_{*}\Sigma^{2n,n}H\mathbb{F}_{p}^{K} \arrow[r, "i^{*}j_{*}P^{n}_{K}"] & i^{*}j_{*}\Sigma^{2pn,pn}H\mathbb{F}_{p}^{K} \arrow[r, "\pi"]& \Sigma^{2pn,pn} H\mathbb{F}_{p}^{k}.
\end{tikzcd}
\]

From Theorem \ref{T:phiproperties} and the above commuting diagrams, $P^{n}_{k}(\iota_{n})=\pi \circ (i^{*}j_{*}P^{n}_{K})\circ (i^{*}j_{*}\iota_{n})\circ i^{*}\eta$. Hence, from diagram \ref{diagrampthpower}, we get $P^{n}_{k}(\iota_{n})=\pi \circ (i^{*}j_{*}\iota_{n}^{p})\circ i^{*}\eta=\iota^{p}_{n}$.

\indent For $k$ not perfect, we have an essentially smooth morphism $f: \textrm{Spec}(k) \to \textrm{Spec}(\mathbb{F}_{p})$ and $f^{*}(K_{n,\mathbb{F}_{p}})\cong K_{n,k}$ \cite[Theorem 2.11]{HKO}. As $\mathbb{F}_{p}$ is perfect, we then have $f^{*}(P^{n}_{\mathbb{F}_{p}}(\iota_{n}))=f^{*}(P^{n}_{\mathbb{F}_{p}})(\iota_{n})=f^{*}(\iota_{n}^{p})=\iota^{p}_{n}.$
\end{proof}

\indent From Proposition \ref{P:pullback}, we have the following corollary.

\begin{corollary} Let $X \in \textup{Sm}_{k}$. Then $P^{n}_{k}$ is the $p$th power on $CH^{n}(X)/p$.
\end{corollary}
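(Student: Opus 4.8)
The plan is to reduce the statement over an arbitrary field $k$ of characteristic $p$ to the universal case already handled in Proposition \ref{P:pthpower}, using the base-change result Proposition \ref{P:pullback} to transport the computation from $\mathbb{F}_p$ to $k$. Concretely, let $X \in \textup{Sm}_k$ and let $x \in CH^n(X)/p = H^{2n,n}(X,\mathbb{F}_p)$, represented by a morphism $g \colon \Sigma^\infty_+ X \to \Sigma^{2n,n} H\mathbb{F}_p^k$ in $SH(k)$. Let $f \colon \textup{Spec}(k) \to \textup{Spec}(\mathbb{F}_p)$ be the structure map. I would first invoke Proposition \ref{P:pullback} to rewrite $P^n_k(x) = P^n_k(g) = f^*(P^n_{\mathbb{F}_p})(g)$, which reduces everything to understanding the operation $f^*(P^n_{\mathbb{F}_p})$ on classes pulled back from the universal setting.

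Next I would use the representability of mod $p$ Chow groups by the motivic Eilenberg--MacLane space $K_{n,k}$: the class $x$ is classified by a map $X \to K_{n,k}$ in $H(k)$, so $g$ factors through the pullback $f^*(\iota_n)$ of the canonical class $\iota_n \in H^{2n,n}(K_{n,\mathbb{F}_p}, \mathbb{F}_p)$ along the induced map $X \to K_{n,k} \cong f^*(K_{n,\mathbb{F}_p})$ (using $f^*(K_{n,\mathbb{F}_p}) \cong K_{n,k}$ from \cite[Theorem 2.11]{HKO} when $k$ is not perfect, and Proposition \ref{T:pullbaclEB} combined with the perfect case otherwise — though in fact Proposition \ref{P:pthpower} already packages this cleanly for all $k$). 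By naturality of the Steenrod operation $P^n_{\mathbb{F}_p}$ with respect to pullback along $X \to K_{n,\mathbb{F}_p}$, it then suffices to evaluate $f^*(P^n_{\mathbb{F}_p})$ on $\iota_n$ itself. But this is precisely the content of Proposition \ref{P:pthpower}, which gives $f^*(P^n_{\mathbb{F}_p})(\iota_n) = \iota_n^p$. Pulling this identity back along the classifying map $X \to K_{n,k}$ and using that the pullback is a ring homomorphism on mod $p$ motivic cohomology yields $P^n_k(x) = x^p$.

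The one point requiring a little care — and the main (modest) obstacle — is the compatibility of the $p$th-power statement on the universal class with pullback along an arbitrary morphism $X \to K_{n,k}$: one must know that $f^*(P^n_{\mathbb{F}_p})$ is a natural transformation of cohomology functors (so that evaluating on $\iota_n$ and pulling back computes it on every class), and that $x \mapsto x^p$ is likewise natural, which holds since ring structure on motivic cohomology is preserved by pullback. Both facts are standard consequences of the constructions in Section \ref{sec:def} and the fact that $f^*$ on $SH(-)$ is strongly monoidal; no new computation is needed. Since all the real work has already been done in Propositions \ref{P:pullback}, \ref{T:pullbaclEB}, and \ref{P:pthpower}, the proof of the corollary is essentially a two-line formal deduction: $P^n_k(x) = f^*(P^n_{\mathbb{F}_p})(g) = (f^*\iota_n^{\,p} \text{ pulled back along } X \to K_{n,k}) = x^p$ on $CH^n(X)/p$.
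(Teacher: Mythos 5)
Your proof is correct and takes the same route the paper intends: the paper gives the corollary no explicit proof beyond citing Proposition \ref{P:pullback} (to replace $P^n_k$ by $f^*(P^n_{\mathbb{F}_p})$ on $CH^n(X)/p$) and Proposition \ref{P:pthpower} (to evaluate on the universal class), and you have simply filled in the standard representability/naturality argument that transfers the identity $f^*(P^n_{\mathbb{F}_p})(\iota_n)=\iota_n^p$ to an arbitrary class $x\in CH^n(X)/p$ by pulling back along the classifying map $X\to K_{n,k}$. The one caveat you raise yourself — that $f^*(P^n_{\mathbb{F}_p})$ is a genuine natural transformation — is immediate since it is a morphism of spectra, so there is no gap.
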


\indent Now that we know $f^{*}(P^{n}_{\mathbb{F}_{p}})$ is the $p$th power on $H^{2n,n}(-, \mathbb{F}_{p})$ for all $n \geq 1$, we can prove an instability result. Let $f:\textup{Spec}(k)\to \textup{Spec}(\mathbb{F}_{p})$ be the structure morphism.

\begin{proposition}
Let $p, q, n \geq 0$ be integers such that $n>p-q$ and $n \geq q$. Let $X \in H(k)$ and let $x \in H^{p,q}(X, \mathbb{F}_{p})$. Then $f^{*}(P^{n}_{\mathbb{F}_{p}})(x)=0$.
\end{proposition}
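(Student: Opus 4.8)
The plan is to reduce to a universal example over the prime field and then to transport Voevodsky's characteristic-$0$ instability through the comparison machinery of Section~\ref{sec:def}, following the template of the proof of Proposition~\ref{P:pthpower}. Since $f^{*}(P^{n}_{\mathbb{F}_{p}})$ is a stable cohomology operation and hence natural in $X$, and since motivic Eilenberg--MacLane spaces are stable under essentially smooth base change (\cite[Theorem~2.11]{HKO}), I would first observe that it suffices to treat $k=\mathbb{F}_{p}$ (so $D=\mathbb{Z}_{p}$, $K=\mathbb{Q}_{p}$): writing $x=\bar{x}^{*}\iota$ for a classifying map $\bar{x}$ and the relevant fundamental class $\iota$, and noting that $\iota$ over $k$ is pulled back from the one over $\mathbb{F}_{p}$, one has $f^{*}(P^{n}_{\mathbb{F}_{p}})(x)=\bar{x}^{*}f^{*}\bigl(P^{n}_{\mathbb{F}_{p}}(\iota)\bigr)$, so we are reduced to showing the relevant fundamental class is killed by $P^{n}_{\mathbb{F}_{p}}$.

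Next I would split on the sign of $p-2q$. If $p\le 2q$, I would use that $P^{n}$, being stable, commutes with the $S^{1}$-suspension isomorphism $H^{p,q}(X,\mathbb{F}_{p})\xrightarrow{\ \sim\ }\widetilde{H}^{2q,q}(S^{2q-p}_{s}\wedge X_{+},\mathbb{F}_{p})$; together with the universal example (now for reduced cohomology of pointed spaces, represented by $K_{2q,q,\mathbb{F}_{p}}:=\Omega^{\infty}_{\mathbb{P}^{1}}\Sigma^{2q,q}H\mathbb{F}_{p}^{\mathbb{F}_{p}}$) this reduces the statement to: $P^{n}\iota_{2q,q}=0$ in $H^{*,*}(K_{2q,q,\mathbb{F}_{p}},\mathbb{F}_{p})$ for $n>q$, together with the fact that $P^{q}$ annihilates $\widetilde{H}^{2q,q}(S^{2q-p}_{s}\wedge X_{+},\mathbb{F}_{p})$ when $p<2q$. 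The latter is immediate from Proposition~\ref{P:pthpower}: since $P^{q}\iota_{2q,q}=\iota_{2q,q}^{p}$, on reduced cohomology $P^{q}$ is the $p$-fold reduced cup power, and this vanishes on $\widetilde{H}^{*,*}(S^{r}_{s}\wedge Y)$ for $r\ge 1$ because the reduced diagonal factors through the nullhomotopic map $S^{r}_{s}\to S^{rp}_{s}$. If instead $p>2q$, then $H^{p,q}(R,\mathbb{F}_{p})=0$ for every Henselian local ring $R$ essentially smooth over $\mathbb{Z}_{p}$ (the relevant Bloch--Levine higher Chow group vanishes for degree reasons, compare the proof of Proposition~\ref{T:pullbaclEB}), so $K_{p,q,\mathbb{Z}_{p}}:=\Omega^{\infty}_{\mathbb{P}^{1}}\Sigma^{p,q}\widehat{H}\mathbb{F}_{p}^{\mathbb{Z}_{p}}$ is connected; here I would use the universal example directly and be reduced to $P^{n}\iota_{p,q}=0$ for $n>p-q$.

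In both surviving cases the bidegree $(a,q)$ of the fundamental class satisfies $a\ge 2q$, so $K_{a,q,\mathbb{Z}_{p}}$ is connected and the argument of Proposition~\ref{T:pullbaclEB} applies verbatim to give an isomorphism $i^{*}K_{a,q,\mathbb{Z}_{p}}\cong K_{a,q,\mathbb{F}_{p}}$ in $H(\mathbb{F}_{p})$. I would then run the transport step exactly as in Proposition~\ref{P:pthpower}: applying $i^{*}\eta$ to the $\mathbb{Z}_{p}$-level class $\iota_{a,q}\colon\Sigma^{\infty}_{+}K_{a,q,\mathbb{Z}_{p}}\to\Sigma^{a,q}\widehat{H}\mathbb{F}_{p}^{\mathbb{Z}_{p}}$ and to $P^{n}_{\mathbb{Q}_{p}}(\iota_{a,q})$ yields commuting squares showing $i^{*}\eta\circ\iota_{a,q}=i^{*}j_{*}(\iota_{a,q})\circ i^{*}\eta$, and then, using $\Phi(P^{n}_{\mathbb{Q}_{p}})=P^{n}_{\mathbb{F}_{p}}$ from Theorem~\ref{T:phiproperties}, one computes $P^{n}_{\mathbb{F}_{p}}(\iota_{a,q})=\pi\circ i^{*}j_{*}\bigl(P^{n}_{\mathbb{Q}_{p}}(\iota_{a,q})\bigr)\circ i^{*}\eta$. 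It then suffices to quote the characteristic-$0$ instability for Voevodsky's reduced power operations \cite{Voe}, which gives $P^{n}_{\mathbb{Q}_{p}}(\iota_{a,q})=0$ in $H^{*,*}(K_{a,q,\mathbb{Q}_{p}},\mathbb{F}_{p})$ in the needed ranges ($n>q$ for $a=2q$, and $n>a-q$ for $a=p>2q$), whence $P^{n}_{\mathbb{F}_{p}}(\iota_{a,q})=0$.

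I expect the main obstacle to be organizing the reductions so that the comparison $i^{*}K_{a,q,\mathbb{Z}_{p}}\cong K_{a,q,\mathbb{F}_{p}}$ is only ever invoked for $a\ge 2q$, where $K_{a,q,\mathbb{Z}_{p}}$ is connected and Proposition~\ref{T:pullbaclEB} (connectivity plus Morel's $S^{1}$-recognition principle and \cite[Theorem~8.18]{Spi}) can be used unchanged; the point is that for $p<2q$ the space $K_{p,q,\mathbb{Z}_{p}}$ is \emph{not} connected, so the $S^{1}$-suspension step is genuinely needed to avoid it. A lesser point is to match hypotheses: one must check that the precise characteristic-$0$ instability of \cite{Voe} covers (and is implied by) the conditions $n>p-q$ and $n\ge q$, which should be routine given Voevodsky's computation of the mod $p$ motivic cohomology of Eilenberg--MacLane spaces.
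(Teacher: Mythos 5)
Your proposal is correct in outline, but it takes a genuinely different and substantially heavier route than the paper. The paper's proof is a single sentence: it invokes Voevodsky's own proof of instability, which is a suspension argument whose only non-formal input is the $p$th power identity $P^{q}(\iota_{2q,q})=\iota_{2q,q}^{p}$; once Proposition \ref{P:pthpower} is available, that argument runs unchanged over $k$. Concretely, one $T$-suspends $n-q$ times and $S^{1}_{s}$-suspends $2q-p$ times (when $p\le 2q$), or first $\mathbb{G}_{m}$-suspends $p-2q$ times (when $p>2q$), to land in $\widetilde{H}^{2n,n}$ of a space that is a nontrivial suspension precisely because $n>p-q$ or $n\ge q$ with $n>q$; there the $p$th power kills the class because the reduced diagonal of a suspension is nullhomotopic. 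No comparison with characteristic $0$ is required beyond what went into Proposition \ref{P:pthpower}. You, by contrast, prove the statement by transporting the characteristic-$0$ instability $P^{n}_{\mathbb{Q}_{p}}(\iota_{a,q})=0$ through the DVR, which forces you to extend Proposition \ref{T:pullbaclEB} to the spaces $K_{a,q,\mathbb{Z}_{p}}$ with $a\ge 2q$ (connectivity via degree reasons for $a>2q$, plus the $S^{1}$-recognition principle and \cite[Theorem 8.18]{Spi} as before) and to re-derive the transport identity $P^{n}_{\mathbb{F}_{p}}(\iota_{a,q})=\pi\circ i^{*}j_{*}\bigl(P^{n}_{\mathbb{Q}_{p}}(\iota_{a,q})\bigr)\circ i^{*}\eta$. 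This is all sound, and it is the natural thing to try if one models the proof on Proposition \ref{P:pthpower}, but it is overhead the paper avoids: the suspension argument you already invoke for the $n=q$, $p<2q$ subcase in fact handles all the cases uniformly, which is exactly what the paper's citation of Voevodsky amounts to.

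Two small points you should tidy up. First, the edge case $p=q=0$ (so $a=2q=0$, forced by $p\le 2q$) breaks your claim that ``$a\ge 2q$ implies $K_{a,q,\mathbb{Z}_{p}}$ is connected'': $K_{0,0}$ has $\pi_{0}^{\mathrm{nis}}=\mathbb{F}_{p}$. The statement is still trivially true there (the target $H^{2n(p-1),n(p-1)}$ of the constant sheaf $\underline{\mathbb{F}_p}$ vanishes for $n\ge 1$ by weight reasons), but you need to say so rather than appeal to the connectivity argument. Second, the reduction ``$P^{n}\iota_{2q,q}=0$ for $n>q$'' is strictly stronger than what your suspension step requires; you only need $P^{n}$ to vanish on classes in the image of $\sigma^{2q-p}_{s}$ together with further $T$-suspension, and that is already a consequence of the $p$th power property. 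Noticing this would let you drop the transport step altogether in the range $p\le 2q$, leaving it needed (in your framework) only for $p>2q$ — and even there a $\mathbb{G}_{m}$-suspension, which is invertible on $H\mathbb{F}_{p}$-module spectra, brings one back to the diagonal.
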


\begin{proof}

Voevodsky's proof in \cite[Lemma 9.9]{Voe} works here since $f^{*}(P^{n}_{\mathbb{F}_{p}})$ is the $p$th power on $H^{2n,n}(-, \mathbb{F}_{p})$ by Proposition \ref{P:pthpower}.
\end{proof}

\begin{corollary} 
Let $X \in \textup{Sm}_{k}$. Then $P^{n}_{k}$ is the $0$ map on $CH^{m}(X)/p$ for $m<n$.
\end{corollary}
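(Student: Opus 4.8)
The plan is to deduce this immediately from the instability proposition just proved, together with Proposition \ref{P:pullback}, which identifies $P^{n}_{k}$ with $f^{*}(P^{n}_{\mathbb{F}_{p}})$ on mod $p$ Chow groups, where $f:\textup{Spec}(k)\to\textup{Spec}(\mathbb{F}_{p})$ denotes the structure map.

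First I would fix $X\in\textup{Sm}_{k}$ together with a class $x\in CH^{m}(X)/p=H^{2m,m}(X,\mathbb{F}_{p})$ and an integer $n$ with $m<n$, viewing $x$ equivalently as a morphism $g:\Sigma^{\infty}_{+}X\to\Sigma^{2m,m}H\mathbb{F}_{p}^{k}$ in $SH(k)$. By Proposition \ref{P:pullback} we have $P^{n}_{k}(x)=f^{*}(P^{n}_{\mathbb{F}_{p}})(x)$. Next I would invoke the instability proposition above, applied to the bidegree $(2m,m)$: regarding $X$ as an object of $H(k)$, the class $x$ lies in $H^{2m,m}(X,\mathbb{F}_{p})$, and the hypotheses of that proposition become $n>2m-m=m$ and $n\geq m$, both of which hold since $m<n$. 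Hence $f^{*}(P^{n}_{\mathbb{F}_{p}})(x)=0$, and therefore $P^{n}_{k}(x)=0$.

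Since this is a direct corollary, I do not expect a genuine obstacle; the only point requiring attention is the bookkeeping of degrees, namely that the two numerical conditions in the instability result, when specialized to the Chow-group bidegree $(2m,m)$, both collapse to the single inequality $m<n$. It is also worth recording that the case $n=0$ is vacuous, since then $m<n=0$ is impossible for $m\geq 0$ (and in any case $P^{0}_{k}$ is the identity by Theorem \ref{T:phiproperties}), so we may assume $n\geq 1$ without loss of generality.
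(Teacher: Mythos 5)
Your proof is correct and is the argument the paper clearly intends (the corollary is stated without an explicit proof, immediately following the instability proposition). The key steps — invoking Proposition \ref{P:pullback} to replace $P^{n}_{k}$ by $f^{*}(P^{n}_{\mathbb{F}_{p}})$ on Chow groups, then applying the instability result at bidegree $(2m,m)$ and noting that both numerical hypotheses reduce to $m<n$ — are exactly right.
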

 \section{Proper pushforward}
 \indent In this section, we restrict our attention to mod $p$ Chow groups on $\text{Sm}_{k}$. The ring of mod $p$ Chow groups is an oriented cohomology pretheory in the sense of \cite[Section 1]{Pan} with perfect integration given by proper pushforward on Chow groups. Consider the total cohomological Steenrod operation $P_{k} \coloneqq P^{0}_{k}+P^{1}_{k}+P^{2}_{k}+ \cdots:CH^{*}(-)/p \to CH^{*}(-)/p.$ From the Cartan formula \ref{sectioncartan}, $P_{k}$ is a ring morphism of oriented cohomology pretheories in the sense of \cite[Definition 1.1.7]{Pan}.
 
 \indent Let $\mathbb{Z}[[c_{1},c_{2}, \ldots]]$ denote the power series ring on Chern classes $c_{i}$ for $i \geq 1$ and let $w \in \mathbb{Z}[[c_{1},c_{2}, \ldots]]$ denote the total characteristic class corresponding to the polynomial $f(x)=1+x^{p-1}.$ For $p=2$, $w$ is the total Chern class.  Let $X \in \textup{Sm}_{k}$. For a line bundle $L$ on $X$, $w(L)=1+c_{1}^{p-1}(L) \in CH^{*}(X)$. For a vector bundle $V$ on $X$ that has a filtration by subbundles with quotients given by line bundles $L_{1}, \ldots, L_{m}$, $w(V)=w(L_{1})\cdots w(L_{m}).$ Let $w_{i}$ denote the $i$th homogeneous component of $w$ for $i \geq 0$. We have $w_{i}=0$ if $p-1$ does not divide $i$. Define the total homological Steenrod operation $P^{X} \coloneqq w(-T_{X})\circ P_{k}:CH^{*}(X)/p \to CH^{*}(X)/p$ where $T_{X}$ is the tangent bundle on $X$. For $i \geq0$, let $P^{X}_{i}$ denote the $(p-1)i$th homogeneous component of $P^{X}$. The following proposition is a consequence of the general Riemann-Roch formulas proved by Panin in \cite{Pan}
 
 \begin{proposition} \label{pushforward}
 Let $f:X \to Y$ be a morphism of smooth projective varieties over $k$. Then 
 \[
 \begin{tikzcd}
 CH^{*}(X)/p \arrow[d, "f_{*}"]  \arrow[r, "P^{X}"] 
 & CH^{*}(X)/p \arrow[d, "f_{*}"]\\
  CH^{*}(Y)/p \arrow[r, "P^{Y}"] & CH^{*}(Y)/p
  \end{tikzcd}
  \]
  commutes.
  \end{proposition}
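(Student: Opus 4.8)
The plan is to recognise the proposition as an instance of Panin's Riemann--Roch formalism \cite{Pan} for the ring morphism $P_{k}$ of oriented cohomology pretheories (\cite[Definition 1.1.7]{Pan}), once the characteristic class controlling the effect of $P_{k}$ on Euler classes has been identified with $w$.

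First I would rephrase $f_{*}\circ P^{X}=P^{Y}\circ f_{*}$ as a Riemann--Roch statement for the cohomological operation $P_{k}$ alone. Since the multiplicative class $w$ associated with $f(x)=1+x^{p-1}$ satisfies $w(V)w(-V)=1$ and is compatible with pullback, the projection formula shows that the identity
\[
f_{*}\bigl(w(-T_{X})\cdot P_{k}(x)\bigr)=w(-T_{Y})\cdot P_{k}\bigl(f_{*}(x)\bigr)
\]
is equivalent to
\[
P_{k}\bigl(f_{*}(x)\bigr)=f_{*}\bigl(w(-T_{f})\cdot P_{k}(x)\bigr),\qquad x\in CH^{*}(X)/p,
\]
where $T_{f}\coloneqq T_{X}-f^{*}T_{Y}$ is the virtual relative tangent bundle and one uses $w(f^{*}T_{Y})\,w(-T_{X})=w\bigl(f^{*}T_{Y}-T_{X}\bigr)=w(-T_{f})$ after moving $w(-T_{Y})$ across $f_{*}$.

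Next I would determine the characteristic class governing $P_{k}$. Because $P^{n}_{k}$ vanishes on $CH^{m}(-)/p$ for $m<n$ and $P^{1}_{k}$ is the $p$th power on $CH^{1}(-)/p$ (Propositions \ref{P:pullback} and \ref{P:pthpower}), for a line bundle $L$ one gets
\[
P_{k}\bigl(c_{1}(L)\bigr)=c_{1}(L)+c_{1}(L)^{p}=c_{1}(L)\cdot\bigl(1+c_{1}(L)^{p-1}\bigr)=c_{1}(L)\cdot w(L).
\]
Thus $P_{k}$ multiplies the Euler class of a line bundle by $w(L)$, and by the splitting principle together with the Cartan formula (Section \ref{sectioncartan}) the multiplicative class $\mathrm{td}_{P_{k}}$ attached to $P_{k}$ in Panin's sense is the one determined by the power series $c_{1}(L)/P_{k}(c_{1}(L))=\bigl(1+c_{1}(L)^{p-1}\bigr)^{-1}$, i.e.\ $\mathrm{td}_{P_{k}}(V)=w(V)^{-1}=w(-V)$ for every virtual bundle $V$. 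Panin's general Riemann--Roch theorem for a ring morphism $\varphi$ of oriented cohomology pretheories with perfect integration asserts $\varphi\bigl(f_{*}(x)\bigr)=f_{*}\bigl(\mathrm{td}_{\varphi}(T_{f})\cdot\varphi(x)\bigr)$; specialising to $\varphi=P_{k}$ gives precisely the displayed formula, and unwinding the first reduction yields the proposition.

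The main obstacle is not conceptual but a matter of matching conventions with \cite{Pan}: one must verify that $P_{k}$ satisfies the hypotheses of Panin's Riemann--Roch theorem over $k$ (smoothness and projectivity of $X$ and $Y$ make $T_{f}$ an honest virtual bundle and $f_{*}$ a genuine proper pushforward, while the Cartan formula and the base-change compatibility of Corollary \ref{corbasechange} supply the pretheory-morphism structure), and that Panin's ``Todd class of a morphism of pretheories'' indeed specialises, for $P_{k}$, to the multiplicative class of $1+x^{p-1}$ with the correction attached to $-T_{f}$ in the normalisation of \cite{Pan}. After these identifications the proposition follows by direct appeal to the Riemann--Roch formulas of \cite{Pan}.
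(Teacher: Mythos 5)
Your proposal is correct and follows essentially the same route as the paper, which simply invokes Panin's Riemann--Roch theorem \cite[Theorem 2.5.4]{Pan} with the same two key inputs you identify: the Cartan formula (making $P_{k}$ a ring morphism of oriented cohomology pretheories) and the fact that $P^{n}_{k}$ is the $p$th power on $CH^{n}(-)/p$, which pins down the associated characteristic class as $w$. Your explicit computation $P_{k}(c_{1}(L))=c_{1}(L)\cdot w(L)$ and the reduction via the projection formula are exactly the details the paper leaves to \cite[Section 2.6]{Pan}.
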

  
  \begin{proof}
  This is \cite[Theorem 2.5.4]{Pan}. See \cite[Section 2.6]{Pan} for a discussion relevant to our situation. The main ingredients are that the operations $P^{n}_{k}$ satisfy the Cartan formula and that $P^{n}_{k}$ is the $p$th power on $CH^{n}(-)/p$.
  \end{proof}
  
\indent Restricting to the case $p=\textup{char}(k)=2$, we obtain a Wu formula from the work of Panin \cite[Theorem 2.5.3]{Pan}. Here, $w=c$ is the total Chern class and we let $\textup{Sq}$ denote the total Steenrod square $P_{k}$ on $CH^{*}(-)/2$.

\begin{proposition}
Let $X, Y$ be smooth projective varieties over $k$, and let $i: X \xhookrightarrow{} Y$ be a closed embedding with normal bundle $N$. Then $$i_{*}(c(N))=\textup{Sq}([X])$$ in $CH^{*}(Y)/2$ where $[X] \in CH^{*}(Y)/2$ denotes the mod $2$ cycle class of $X$.

\end{proposition}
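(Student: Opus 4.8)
The plan is to obtain this Wu formula as the special case of the Riemann--Roch statement of Proposition \ref{pushforward} for the closed embedding $f = i : X \to Y$, evaluated on the fundamental class $1 \in CH^0(X)/2$, and then to rewrite $c(-T_X)$ using the normal bundle sequence. Since $p = \textup{char}(k) = 2$ we have $w = c$, the total Chern class, so that $P^X = c(-T_X)\circ\textup{Sq}$ and $P^Y = c(-T_Y)\circ\textup{Sq}$ on mod $2$ Chow groups.

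First I would compute the two relevant classes. Since $\textup{Sq}(1) = P^0_k(1) + P^1_k(1) + \cdots$, with $P^0_k = \textup{id}$ and, by the corollary that $P^n_k$ vanishes on $CH^m(-)/p$ for $m < n$, all $P^n_k(1)$ with $n \geq 1$ equal to zero, we get $\textup{Sq}(1) = 1$ and hence $P^X(1) = c(-T_X)$. Also $i_*(1) = [X]$, so $P^Y(i_*1) = c(-T_Y)\cdot\textup{Sq}([X])$. Applying Proposition \ref{pushforward} to $f = i$ and the class $1 \in CH^0(X)/2$ then gives $i_*\bigl(c(-T_X)\bigr) = c(-T_Y)\cdot\textup{Sq}([X])$ in $CH^*(Y)/2$.

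Next I would use the short exact sequence $0 \to T_X \to i^*T_Y \to N \to 0$, which gives $-T_X = i^*(-T_Y) + N$ in $K_0(X)$. Because the total Chern class is multiplicative and extends to virtual bundles, $c(-T_X) = i^*c(-T_Y)\cdot c(N)$, and the projection formula for $i_*$ yields $i_*\bigl(c(-T_X)\bigr) = c(-T_Y)\cdot i_*\bigl(c(N)\bigr)$. Combining with the previous identity, $c(-T_Y)\cdot i_*\bigl(c(N)\bigr) = c(-T_Y)\cdot\textup{Sq}([X])$; since $c(-T_Y)$ has degree-zero component $1$ and $CH^*(Y)/2$ is supported in degrees $0,\dots,\dim Y$, it is a unit, and cancelling it gives $i_*\bigl(c(N)\bigr) = \textup{Sq}([X])$.

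The essential input is Proposition \ref{pushforward} (equivalently Panin's Theorem 2.5.3), which itself relies on the Cartan formula for the $P^n_k$ and on $P^n_k$ being the squaring map on $CH^n(-)/2$; granting that, the remaining work is the bookkeeping with $K$-theory classes above. The one point requiring a little care is the identification $\textup{Sq}(1) = 1$ together with the extension of $c$ to virtual bundles, so that $c(-T_X)$ literally equals $i^*c(-T_Y)\cdot c(N)$ rather than merely agreeing up to higher-degree corrections; beyond that the argument is formal.
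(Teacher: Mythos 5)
Your argument is correct, and it takes a somewhat different route from the paper: the paper simply invokes Panin's Wu-type formula \cite[Theorem 2.5.3]{Pan} as a black box, whereas you derive the statement from the already-established Riemann--Roch square of Proposition \ref{pushforward} (Panin's Theorem 2.5.4) applied to $f=i$ and $1\in CH^{0}(X)/2$. Your chain of identities is sound: $\textup{Sq}(1)=1$ follows from $P^{0}_{k}=\mathrm{id}$ together with the instability corollary ($P^{n}_{k}=0$ on $CH^{m}(-)/p$ for $m<n$); the normal bundle sequence gives $c(-T_{X})=i^{*}c(-T_{Y})\cdot c(N)$ by the Whitney formula extended to $K_{0}$; the projection formula and the invertibility of $c(-T_{Y})$ (unit constant term in the bounded graded ring $CH^{*}(Y)/2$) then yield $i_{*}(c(N))=\textup{Sq}([X])$. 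What your approach buys is a self-contained proof within the paper's framework, making the Wu formula a formal consequence of the two properties the paper has actually verified for its new operations (Cartan formula and $p$th power on $CH^{n}(-)/p$), rather than requiring the reader to check that Panin's Theorem 2.5.3 applies verbatim; the cost is a page of bookkeeping that the citation avoids. Both routes ultimately rest on the same inputs.
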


\section{Rost's degree formula}    \label{Rost}
\indent Now that we have Steenrod operations on mod $p$ Chow groups of $\textup{Sm}_{k}$, we can prove Rost's degree formula \cite[Theorem 6.4]{Mer} without any restrictions on the characteristic of the base field. We closely follow the presentation of Merkurjev \cite{Mer} where Steenrod operations (assuming restrictions on the characteristic of the base field) are used to prove degree formulas. In \cite{Hau3}, Haution extended the Rost degree formulas to base fields of characteristic $2$. 

\indent For a variety $X$ over $k$, let $n_{X}$ denote the greatest common divisor of $\textup{deg}(x)$ over all closed points $x \xhookrightarrow{} X$. Let $X \in \textup{Sm}_{k}$ be projective of dimension $d>0$. Applying Proposition \ref{pushforward} to the structure morphism $X \to \textup{Spec}(k)$ and $[X] \in CH_{d}(X)/p$, we see that $p \mid \textup{deg}(w_{d}(-T_{X}))$. 

\begin{proposition} Let $f:X \to Y$ be a morphism of projective varieties $X, Y \in \textup{Sm}_{k}$ of dimension $d>0$. Then $n_{Y} \mid n_{X}$ and $$\frac{\textup{deg}(w_{d}(-T_{X}))}{p} \equiv \textup{deg}(f)\cdot \frac{\textup{deg}(w_{d}(-T_{Y}))}{p} \mod n_{Y}.$$
\end{proposition}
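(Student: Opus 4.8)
The plan is to run the fundamental class $[X]$ through the total homological Steenrod operation $P^X$ and the compatibility with proper pushforward (Proposition~\ref{pushforward}), and then to upgrade the resulting mod~$p$ identity of $0$-cycles to an \emph{integral} one; it is this last integral upgrade that converts a congruence modulo $p$ into one modulo $p\,n_Y$.

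The divisibility $n_Y \mid n_X$ is elementary: for a closed point $x \hookrightarrow X$ its image $f(x)$ is a closed point of $Y$, and $k(f(x)) \subseteq k(x)$ forces $\deg(f(x)) \mid \deg(x)$; since $n_Y \mid \deg(f(x))$ we get $n_Y \mid \deg(x)$ for every closed point of $X$, hence $n_Y \mid n_X$. For the congruence I would first record that (after reducing to $X$ and $Y$ integral, the general case being additive over the connected components, all of dimension $d$) the total operation $P_k$ restricts to the identity on $CH^{0}(-)/p$, since $P^{0}_{k}$ is the identity and $P^{n}_{k}$ kills $CH^{m}(-)/p$ for $m<n$; therefore $P^{X}([X]) = w(-T_X)\cdot[X] = w(-T_X)$ and likewise $P^{Y}([Y]) = w(-T_Y)$. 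Applying Proposition~\ref{pushforward} to $f$ and the class $[X]$, together with $f_{*}[X] = \deg(f)\,[Y]$ in $CH_d(Y)/p$ (where $\deg(f) = [k(X):k(Y)]$ if $f$ is dominant and $\deg(f)=0$ otherwise), gives
\[
  f_{*}\big(w(-T_X)\big) \;=\; P^{Y}\big(f_{*}[X]\big) \;=\; \deg(f)\,w(-T_Y) \qquad\text{in } CH^{*}(Y)/p .
\]
Because $\dim X = \dim Y = d$, proper pushforward preserves codimension, so comparing the codimension-$d$ components yields $f_{*}\big(w_d(-T_X)\big) = \deg(f)\,w_d(-T_Y)$ in $CH_{0}(Y)/p$.

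Finally I would lift this to integral cycles. The characteristic classes $w_d(-T_X)\in CH_0(X)$ and $w_d(-T_Y)\in CH_0(Y)$ are defined integrally, so the mod~$p$ equality above means $f_{*}(w_d(-T_X)) - \deg(f)\,w_d(-T_Y) = p\beta$ for some $\beta \in CH_0(Y)$. Applying the degree map $\deg = \pi_{Y*}\colon CH_0(Y)\to\mathbb{Z}$ and using $\deg\circ f_{*} = \deg$ gives
\[
  \deg\big(w_d(-T_X)\big) - \deg(f)\,\deg\big(w_d(-T_Y)\big) \;=\; p\,\deg(\beta),
\]
with $\deg(\beta) \in \operatorname{im}\big(\deg\colon CH_0(Y)\to\mathbb{Z}\big) = n_Y\mathbb{Z}$. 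Since $p$ divides both $\deg(w_d(-T_X))$ and $\deg(w_d(-T_Y))$ (as already shown, via Proposition~\ref{pushforward} applied to the structure morphism, for any smooth projective variety of positive dimension), dividing through by $p$ yields $\deg(w_d(-T_X))/p - \deg(f)\,\deg(w_d(-T_Y))/p = \deg(\beta) \in n_Y\mathbb{Z}$, which is the claim. The only genuine obstacle is conceptual: one must observe that the Steenrod-operation identity is only an equality modulo $p\,CH_0(Y)$, and that taking degrees of this finer integral relation is exactly what promotes the modulus from $p$ to $p\,n_Y$; the codimension bookkeeping and the identity $P^{X}([X]) = w(-T_X)$ are routine.
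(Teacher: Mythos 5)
Your proposal is correct and is essentially the paper's argument (which defers the details to Merkurjev's Theorem 6.4): apply Proposition \ref{pushforward} to $[X]$ to get $f_{*}(w_{d}(-T_{X}))\equiv \deg(f)\,w_{d}(-T_{Y})$ in $CH_{0}(Y)/p$, then take the degree homomorphism, using that the image of $\deg$ on $CH_0(Y)$ is $n_Y\mathbb{Z}$ to promote the modulus from $p$ to $p\,n_Y$. You simply make explicit the steps the paper leaves to the citation (the identity $P^{X}([X])=w(-T_X)$ via instability, the integral lift, and the elementary divisibility $n_Y\mid n_X$), all of which check out.
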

 \begin{proof}
 The proof in \cite[Theorem 6.4]{Mer} works here. From Proposition \ref{pushforward}, $f_{*}(w_{d}(-T_{X}))\equiv \textup{deg}(f)w_{d}(-T_{Y}) \in CH_{0}(Y)/p.$ We then take the degree homomorphism to finish the proof.
 \end{proof}

\section{Specialization map}

\indent Fix a complete unramified DVR $D$ with residue field $i:\textup{Spec}(k) \to \textup{Spec}(D)$ and fraction field $j:\textup{Spec}(K) \to \textup{Spec}(D)$ as before. Let $X \in \textup{Sm}_{D}$ be projective with special fiber $X_{k}$ and generic fiber $X_{K}$. As described in \cite[Chapter 20.3]{Ful}, there are specialization maps $\sigma_{n}:CH^{n}(X_{K}) \to CH^{n}(X_{k})$ defined for all $n \geq 0$. The specialization maps can be defined at the level of cycles. Namely, for an irreducible closed subvariety $Z_{K} \subset X_{K}$ of codimension $n$, we let $Z_{k}$ denote the special fiber of the reduced closed subscheme $\overline{Z_{K}} \subset X$ associated to $Z_{K} \subset X$. Then $\sigma_{n}(\left<Z_{K} \right>)=\left<Z_{k} \right> \in CH^{n}(X_{k})$. We also let $\sigma_{n}$ denote the specialization map induced on mod $p$ Chow groups.

\indent We now show that the Steenrod operations $P^{n}_{k}$ defined on $CH^{*}(X_{k})$ are compatible with the operations $P^{n}_{K}$ defined on $CH^{*}(X_{K})$.

\begin{proposition} \label{P:specialization}
Let $m \geq 0$ and let $Z_{K} \subset X_{K}$ be a closed subvariety of codimension $n$. Let $\left<Z_{K} \right> \in CH^{n}(X_{K})/p$ denote the mod $p$ cycle class of $Z_{K}$. Then $$P^{m}_{k}(\sigma_{n}(\left<Z_{K} \right>))=\sigma_{n+m(p-1)}(P^{m}_{K}(\left<Z_{K} \right>)) \in CH^{n+m(p-1)}(X_{k})/p.$$
\end{proposition}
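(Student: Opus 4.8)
The plan is to reinterpret the specialization map — restricted to cycles pulled back from the generic fibre — as the motivic pullback $i^{*}$ of a lift to $X$, and then to combine this with the identity $P^{m}_{k}=\pi\circ i^{*}j_{*}(P^{m}_{K})\circ i^{*}\eta$ of Theorem~\ref{T:phiproperties}(2) and the naturality of the unit $\eta\colon \mathrm{id}\to j_{*}j^{*}$ on $SH(D)$. Write $c=n+m(p-1)$. The first step is the reformulation: the localization sequence for $X_{k}\hookrightarrow X\hookleftarrow X_{K}$ (in the form $CH^{c-1}(X_{k})/p\xrightarrow{i_{*}}CH^{c}(X)/p\xrightarrow{j^{*}}CH^{c}(X_{K})/p\to 0$) shows $j^{*}$ is surjective; and since $X_{k}=\mathrm{div}(\pi)$ is a principal Cartier divisor, the normal bundle $N_{X_{k}/X}$ is trivial, so $i^{*}i_{*}=0$ on Chow groups. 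Combining these with the description of $\sigma$ in \cite[Chapter~20.3]{Ful} (close up, then intersect with $X_{k}$), and using that the Gysin pullback along the regular embedding $i$ agrees with the Bloch--Levine pullback $i^{*}$, one obtains a well-defined identity
\[
\sigma_{c}(u)=i^{*}\hat u\qquad\text{for any }\hat u\in CH^{c}(X)/p\text{ with }j^{*}\hat u=u .
\]
Taking $\hat u=[\overline{Z_{K}}]$, which is flat over $D$ because $\overline{Z_{K}}$ is integral and dominates $\mathrm{Spec}(D)$, so that $[\overline{Z_{K}}]\cap X_{k}=\langle Z_{k}\rangle$, recovers in particular the cycle-level description $\sigma_{n}(\langle Z_{K}\rangle)=i^{*}[\overline{Z_{K}}]$.

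Next I would lift this to the spectral level. Since $\widehat{H}\mathbb{Z}^{D}$ represents Bloch--Levine motivic cohomology \cite{Lev,Spi} and motivic cohomology of smooth schemes over a field is higher Chow groups (Theorem~\ref{voesusfri}), a class $\hat u\in CH^{c}(X)/p$ is a morphism $\tilde u\colon\Sigma^{\infty}_{+}X\to\Sigma^{2c,c}\widehat{H}\mathbb{F}^{D}_{p}$ in $SH(D)$, with $i^{*}\tilde u$ representing $i^{*}\hat u$ and $j^{*}\tilde u$ representing $j^{*}\hat u$. Naturality of $\eta$ applied to $\tilde u$ gives $i^{*}j_{*}(j^{*}\tilde u)\circ i^{*}\eta_{\Sigma^{\infty}_{+}X}=i^{*}\eta\circ i^{*}\tilde u$, and since $\pi\circ i^{*}\eta=\mathrm{id}$ (Theorem~\ref{T:defofpi0andpi}) one concludes, for \emph{any} representative $\rho_{u}\colon\Sigma^{\infty}_{+}X_{K}\to\Sigma^{2c,c}H\mathbb{F}^{K}_{p}$ of a class $u\in CH^{c}(X_{K})/p$,
\[
\sigma_{c}(u)=i^{*}\tilde u=\pi\circ i^{*}j_{*}(\rho_{u})\circ i^{*}\eta_{\Sigma^{\infty}_{+}X}\in CH^{c}(X_{k})/p ,
\]
where $\tilde u$ is any lift of $\rho_{u}$ to $SH(D)$ (one exists by the surjectivity of $j^{*}$), and the right-hand side is independent of that choice because $\sigma_{c}$ is.

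Finally I would assemble the two. Let $\tilde w\colon\Sigma^{\infty}_{+}X\to\Sigma^{2n,n}\widehat{H}\mathbb{F}^{D}_{p}$ represent $[\overline{Z_{K}}]$ mod $p$, so $i^{*}\tilde w$ represents $\sigma_{n}(\langle Z_{K}\rangle)$ and $j^{*}\tilde w$ represents $\langle Z_{K}\rangle$. Using $P^{m}_{k}=\pi\circ i^{*}j_{*}(P^{m}_{K})\circ i^{*}\eta$ (Theorem~\ref{T:phiproperties}) and, again, naturality of $\eta$ applied to $\tilde w$,
\[
P^{m}_{k}\big(\sigma_{n}(\langle Z_{K}\rangle)\big)=\pi\circ i^{*}j_{*}(P^{m}_{K})\circ i^{*}\eta\circ i^{*}\tilde w=\pi\circ i^{*}j_{*}\big(P^{m}_{K}\circ j^{*}\tilde w\big)\circ i^{*}\eta_{\Sigma^{\infty}_{+}X}.
\]
The morphism $P^{m}_{K}\circ j^{*}\tilde w$ is a representative of $P^{m}_{K}(\langle Z_{K}\rangle)\in CH^{c}(X_{K})/p$, so the spectral description of $\sigma_{c}$ from the previous paragraph identifies the right-hand side with $\sigma_{c}\big(P^{m}_{K}(\langle Z_{K}\rangle)\big)=\sigma_{n+m(p-1)}\big(P^{m}_{K}(\langle Z_{K}\rangle)\big)$, which is what we want.

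I expect the main obstacle to be precisely the first step: carefully matching Fulton's intersection-theoretic specialization homomorphism with the contravariant functoriality $i^{*}$, $j^{*}$ of Bloch--Levine motivic cohomology over $D$ at the level of Chow groups — including cycle multiplicities on the special fibre and the identification of the Gysin map with the motivic $i^{*}$ — together with the compatibility of the Bloch--Levine cycle complexes with the $SH(D)$-level pullbacks $i^{*}$, $j^{*}$. Once that dictionary is established, the remainder is a formal diagram chase with the unit $\eta$ and the splitting $\pi\circ i^{*}\eta=\mathrm{id}$; notably no further instability or degree-vanishing input is needed beyond what already goes into $P^{m}_{k}=\Phi(P^{m}_{K})$.
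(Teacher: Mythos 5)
Your proof is correct and, at heart, it is the paper's proof: lift cycle representatives to $SH(D)$, use $P^{m}_{k}=\pi\circ i^{*}j_{*}(P^{m}_{K})\circ i^{*}\eta$ from Theorem~\ref{T:phiproperties}(2), apply naturality of the unit $\eta$, and conclude with $\pi\circ i^{*}\eta=\mathrm{id}$. The one place you genuinely depart from the paper's exposition is in treating $P^{m}_{K}(\langle Z_{K}\rangle)$: the paper writes it out explicitly as a sum $\sum_{l}a_{l}\langle Z^{l}_{K}\rangle$, takes closures $\overline{Z}^{l}_{K}$ in $X$, builds a second morphism $g\colon\Sigma^{\infty}_{+}X\to\Sigma^{2c,c}\widehat{H}\mathbb{F}^{D}_{p}$ from these, and applies naturality of $\eta$ a second time to the diagram for $g$. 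You instead package the well-definedness into a reusable lemma — that $\sigma_{c}(u)=\pi\circ i^{*}j_{*}(\rho_{u})\circ i^{*}\eta_{\Sigma^{\infty}_{+}X}$ for \emph{any} spectral representative $\rho_{u}$, with independence of the choice of $D$-lift coming from surjectivity of $j^{*}$ in the localization sequence together with $i^{*}i_{*}=0$ (trivial normal bundle of the special fibre). This lets you plug $\rho_{u}=P^{m}_{K}\circ j^{*}\tilde w$ in directly, avoiding the explicit second cycle representative. The gain is modest but real: one invocation of naturality instead of two, and no bookkeeping of the $\overline{Z}^{l}_{K}$. The burden you flag at the end — matching Fulton's specialization with the Bloch--Levine pullback $i^{*}$ and identifying $\mathrm{Hom}_{SH(D)}(\Sigma^{\infty}_{+}X,\Sigma^{2c,c}\widehat{H}\mathbb{F}^{D}_{p})$ with $CH^{c}(X)/p$ — is used implicitly (and just as unargued) in the paper when it asserts that $i^{*}f_{D}$ and $i^{*}g$ compute the special-fibre cycle classes, so you are not assuming anything extra.
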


\begin{proof}
The mod $p$ cycle class of $\overline{Z_{K}} \subset X$ induces a map $$f_{D}:\Sigma^{\infty}_{+}X \to \Sigma^{2n,n}\widehat{H}\mathbb{F}_{p}^{D}$$ in $SH(D)$. The map $i^{*}f_{D}$ gives the mod $p$ cycle class of $Z_{k}$ (the special fiber of $\overline{Z_{K}} \subset X$) and $j^{*}f_{D}$ gives the mod $p$ cycle class of $Z_{K}$. Applying the natural transformation $i^{*}\eta:i^{*} \to i^{*}j_{*}j^{*}$ to $f_{D}$, we get a commuting square. 

\begin{equation} \label{diag1special}
\begin{tikzcd}
\Sigma^{\infty}_{+}X_{k} \arrow[r, "i^{*}f_{D}"] \arrow[d, "i^{*}\eta"] & \Sigma^{2n,n}H\mathbb{F}_{p}^{k} \arrow[d, "i^{*}\eta"]\\
i^{*}j_{*}\Sigma^{\infty}_{+}X_{K} \arrow[r, "i^{*}j_{*}j^{*}f_{D}"] & i^{*}j_{*}\Sigma^{2n,n}H\mathbb{F}_{p}^{K}
\end{tikzcd}
\end{equation}

From Theorem \ref{T:phiproperties}, $P^{m}_{k}=\Phi(P^{m}_{K})=\pi \circ i^{*}j_{*}P^{m}_{K}\circ i^{*}\eta$. Hence, from diagram \ref{diag1special}, we get that $$\pi \circ i^{*}\eta\circ P^{m}_{k} \circ i^{*}f_{D}=\pi \circ i^{*}j_{*}P^{m}_{K}\circ i^{*}j_{*}j^{*}f_{D}\circ i^{*}\eta$$ in the following commuting diagram.

\begin{equation} \label{diag2special}
\begin{tikzcd}
\Sigma^{\infty}_{+}X_{k} \arrow[r, "i^{*}f_{D}"] \arrow[d, "i^{*}\eta"] & \Sigma^{2n,n}H\mathbb{F}_{p}^{k} \arrow[d, "i^{*}\eta"] \arrow[r, "P^{m}_{k}"] & \Sigma^{2(n+m(p-1)),n+m(p-1)}H\mathbb{F}_{p}^{k} \arrow[d, "i^{*}\eta"] \\
i^{*}j_{*}\Sigma^{\infty}_{+}X_{K} \arrow[r, "i^{*}j_{*}j^{*}f_{D}"] & i^{*}j_{*}\Sigma^{2n,n}H\mathbb{F}_{p}^{K} \arrow[r, "i^{*}j_{*}P^{m}_{K}"] & i^{*}j_{*}\Sigma^{2(n+m(p-1)),n+m(p-1)}H\mathbb{F}_{p}^{k} \arrow[d, "\pi"] \\
& & \Sigma^{2(n+m(p-1)),n+m(p-1)}H\mathbb{F}_{p}^{k}
\end{tikzcd}
\end{equation}

\indent Write $P^{m}_{K}(\left<Z_{K}\right>)=\sum_{l=1}^{q}a_{l}\left<Z^{l}_{K}\right>$ for some $q, a_{l} \in \mathbb{Z}$ and closed subvarieties $Z^{l}_{K} \subset X_{K}$ of codimension $n+m(p-1)$. Taking the associated reduced closed subschemes in $X$, we get an element $\sum_{l=1}^{q}a_{l}\left<\overline{Z}^{l}_{K}\right> \in H^{2(n+m(p-1)),n+m(p-1)}(X, \mathbb{F}_{p})$ which corresponds to a morphism $g:\Sigma^{\infty}_{+}X \to \ \Sigma^{2(n+m(p-1)),n+m(p-1)}\widehat{H}\mathbb{F}_{p}^{D}$. For $1 \leq l \leq q$, let $Z^{l}_{k}$ denote the special fiber of $\overline{Z}^{l}_{K}$. Taking pullbacks, $i^{*}g$ gives $\sum_{l=1}^{q}a_{l}\left<Z^{l}_{k}\right> \in H^{2(n+m(p-1)),n+m(p-1)}(X_{k}, \mathbb{F}_{p})$ and $j^{*}g=\sum_{l=1}^{q}a_{l}\left<Z^{l}_{K}\right>=P^{m}_{K}(\left<Z_{K}\right>)$.  Applying $i^{*}\eta$ to $g$, we get a commuting diagram. 

\begin{equation} \label{diagspecial3}
\begin{tikzcd}
\Sigma^{\infty}_{+}X_{k} \arrow[r, "i^{*}g"] \arrow[d, "i^{*}\eta"] & \Sigma^{2(n+m(p-1)),n+m(p-1)}H\mathbb{F}_{p}^{k} \arrow[d, "i^{*}\eta"] \\
i^{*}j_{*}\Sigma^{\infty}_{+}X_{K} \arrow[r, "i^{*}j_{*}j^{*}g"] & i^{*}j_{*}\Sigma^{2(n+m(p-1)),n+m(p-1)}H\mathbb{F}_{p}^{K} \arrow[d, "\pi"] \\ & \Sigma^{2(n+m(p-1)),n+m(p-1)}H\mathbb{F}_{p}^{k}
\end{tikzcd}
\end{equation}

\indent From diagrams \ref{diag2special} and \ref{diagspecial3}, we get $$i^{*}g=\sum_{l=1}^{q}a_{l}\left<Z^{l}_{k}\right>=\pi \circ i^{*}j_{*}j^{*}g \circ i^{*}\eta=\pi \circ i^{*}j_{*}(P^{m}_{K}(\left<Z_{K}\right>)) \circ i^{*}\eta\\$$
$$=\pi \circ i^{*}j_{*}P^{m}_{K} \circ i^{*}j_{*}j^{*}f_{D} \circ i^{*}\eta=P^{m}_{k}(\left<Z_{k}\right>)$$

as required.
\end{proof}

\indent We recall some facts about flag varieties, using \cite{Koc} as a reference. Let $G_{k}$ be a split reductive group over $k$ with Borel subgroup $B_{k}$ and Weyl group $W$. From the Bruhat decomposition, we have  $$G_{k}/B_{k}=\coprod _{w \in W} B_{k}wB_{k}/B_{k}.$$ For $w \in W$, the closure $X^{w}_{k}$ of $B_{k}wB_{k}/B_{k}$ in $G_{k}/B_{k}$ is called a Schubert variety and $$B_{k}wB_{k}/B_{k} \cong \mathbb{A}^{l(w)}_{k}$$ where $l(w)$ is the length of $w$ in $W$. Let $P_{k} \supseteq B_{k}$ be a parabolic subgroup of $G_{k}$. We have $P_{k}=BW_{P}B$ for some subgroup $W_{P} \leq W$.  There is a related $W^{P} \subset W$, such that for each $w \in W^{P}$, $B_{k}wB_{k}/B_{k}$ is isomorphic to $B_{k}wB_{k}/P_{k}$  under the quotient morphism $G_{k}/B_{k} \to G_{k}/P_{k}$ \cite[Lemma 1.2]{Koc}. We also have a cell decomposition $$G_{k}/P_{k}=\coprod _{w \in W^{P}} B_{k}wB_{k}/P_{k}.$$ This cell decomposition is independent of the field $k$. It follows that the total chow group $CH^{*}(G_{k}/P_{k})$ is freely generated as an additive group by the cycle classes $\left<Y^{w}_{k}\right>$ of the images $Y^{w}_{k}$ of the Schubert varieties $X^{w}_{k}$ for $w \in W^{P}$. 

\indent Chevalley \cite{Chev} and Demazure \cite{Dem} showed that the chow rings $$CH^{*}(G_{F_{1}}/P_{F_{1}})\; \textup{and} \; CH^{*}(G_{F_{2}}/P_{F_{2}})$$ are isomorphic for any two fields $F_{1}, F_{2}$. The isomorphism is given by mapping the class of a Schubert subscheme $Y^{w}_{F_{1}}$ to $Y^{w}_{F_{2}}$ for $w \in W^{P}$.  We now prove that the Steenrod operations $P^{n}_{k}$ and $P^{n}_{K}$ give the same action on $H^{2*,*}(G_{k}/P_{k}, \mathbb{F}_{p}) \cong CH^{*}(G_{k}/P_{k})/p \cong CH^{*}(G_{K}/P_{K})/p \cong H^{2*,*}(G_{K}/P_{K}, \mathbb{F}_{p}).$

\begin{corollary} \label{P:flagvar} Let $n \geq 0$ and let $w_{0} \in W^{P}$. We have $$P^{n}_{K}(\left<Y^{K}_{w_{0}}\right>)=\sum_{w \in W^{P}} a_{w}\left<Y^{K}_{w}\right>$$ in $CH^{*}(G_{K}/P_{K})/p$ for some $a_{w} \in \mathbb{Z}$. Then $$P^{n}_{k}(\left<Y^{k}_{w_{0}}\right>)=\sum_{w \in W^{P}} a_{w}\left<Y^{k}_{w}\right>.$$
\end{corollary}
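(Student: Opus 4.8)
The plan is to exhibit the flag varieties over $k$ and over $K$ as the two fibers of a single smooth projective scheme over $D$ and then to feed this into Proposition \ref{P:specialization}. Concretely, I would let $G_D$ be the split reductive group scheme over $D$ with the same root datum as $G_k$ (such a group exists over any base by SGA 3), and let $P_D \supseteq B_D$ be the standard parabolic subgroup scheme corresponding to $P_k$. Then $X \coloneqq G_D/P_D$ is smooth and projective over $D$, with special fiber $X_k = G_k/P_k$ and generic fiber $X_K = G_K/P_K$. The Bruhat cell decomposition is already defined over $D$, so for each $w \in W^P$ one has the cell $B_D w B_D/P_D \cong \mathbb{A}^{l(w)}_D$ and its reduced closure $Y^D_w \subset X$, an integral closed subscheme whose generic fiber should be $Y^K_w$ and whose special fiber should be $Y^k_w$.

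The second step is to identify the specialization map with the Chevalley--Demazure isomorphism on Schubert classes. Since $Y^D_w$ is integral with generic point mapping to the generic point of $\textup{Spec}(D)$, it is flat over $D$; being a closed subscheme containing $Y^K_w$ and of the same dimension over the generic point, it is in fact the closure of $Y^K_w$ in $X$. Invoking the standard geometry of Schubert varieties over a Dedekind base (they are defined over $\mathbb{Z}$, are flat over $D$, and have geometrically integral fibers), the special fiber of $Y^D_w$ is the reduced Schubert variety $Y^k_w$, so the cycle-level description of specialization in \cite[Chapter 20.3]{Ful} gives $\sigma(\left<Y^K_w\right>) = \left<Y^k_w\right>$ for every $w \in W^P$. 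Because the classes $\left<Y^K_w\right>$ (respectively $\left<Y^k_w\right>$) form an additive basis of $CH^*(G_K/P_K)/p$ (respectively $CH^*(G_k/P_k)/p$), this says precisely that $\sigma$ realizes the Chevalley--Demazure isomorphism.

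Finally, I would apply Proposition \ref{P:specialization} to this $X$ with $Z_K = Y^K_{w_0}$ and the operation $P^n_K$, obtaining $P^n_k(\sigma(\left<Y^K_{w_0}\right>)) = \sigma(P^n_K(\left<Y^K_{w_0}\right>))$. By the previous step the left-hand side equals $P^n_k(\left<Y^k_{w_0}\right>)$. For the right-hand side, writing $P^n_K(\left<Y^K_{w_0}\right>) = \sum_{w \in W^P} a_w \left<Y^K_w\right>$ and using additivity of $\sigma$ together with $\sigma(\left<Y^K_w\right>) = \left<Y^k_w\right>$ turns it into $\sum_{w \in W^P} a_w \left<Y^k_w\right>$, which is the asserted formula. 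The only non-formal ingredient, and hence the main obstacle, is the claim in the second step that the closure in $G_D/P_D$ of the generic Schubert variety $Y^K_w$ is flat over $D$ with reduced, irreducible special fiber $Y^k_w$; everything else is bookkeeping together with the already-established specialization compatibility of Proposition \ref{P:specialization}.
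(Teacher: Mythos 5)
Your proposal is correct and follows essentially the same route as the paper: spread the flag variety and its Schubert subvarieties out over $D$, verify that the integral Schubert schemes are flat with reduced fibers specializing correctly (the paper handles this by citing Seshadri, working first in $G_D/B_D$ and then projecting to $G_D/P_D$), and conclude via Proposition \ref{P:specialization}. You also correctly isolate the one non-formal ingredient — reducedness of the special fibers — which is exactly the point the paper flags.
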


\begin{proof} 
We refer to \cite{Con} for facts about integral models of split reductive groups. Let $w\in W$ and let $X^{w}_{D}$ be the reduced closed subscheme of $G_{D}/B_{D}$ associated to $B_{D}wB_{D}/B_{D}$. Note that $X^{w}_{D}$ is flat over $\mathrm{Spec}(D).$ For any field $F$ and morphism $\mathrm{Spec}(F) \to \mathrm{Spec}(D)$, the fiber $X^{w}_{D} \times _{\mathrm{Spec}(D)} \mathrm{Spec}(F)$ in $G_{F}/B_{F}$ is isomorphic to $X^{w}_{F}$ \cite[Theorem 2]{Ses}. The main point to check is that the fibers of $X^{w}_{D}$ over $\mathrm{Spec}(D)$ are reduced.

\indent Now assume that $w\in W^{P}$. Let $Y^{w}_{D}$ denote the image of $X^{w}_{D}$ in $G_{D}/P_{D}$. Then $Y^{w}_{D} \times _{\mathrm{Spec}(D)} \mathrm{Spec}(F) \cong Y^{w}_{F}$ for any field $F$ and morphism $\mathrm{Spec}(F) \to \mathrm{Spec}(D)$. Proposition \ref{P:specialization} then applies to finish the proof.

\end{proof}

\section{Applications to quadratic forms} \label{S:appquad}

In this section, we use the Steenrod squares $\textrm{Sq}^{2n}_{k}$ to prove new results about nonsingular quadratic forms over a field $k$ of characteristic $2$. The results we prove have analogues in characteristic $\neq 2$ conveniently found in \cite[Sections 79-82]{EKM} where the only missing ingredient for extending to characteristic $2$ was the existence of Steenrod squares satisfying expected properties.

\indent Recall that a quadratic form $(q,V)$ over $k$ is nonsingular if the associated radical $V^{\perp}$ is of dimension at most $1$ and $q$ is nonzero on $V^{\perp}\setminus{0}$. Equivalently, $(q,V)$ is nonsingular if the associated projective quadric is smooth. Note that nonsingular quadratic forms are called nondegenerate in \cite{EKM}. In characteristic $2$, anisotropic quadratic forms are not necessarily nonsingular. Let $(q, V)$ be a nonsingular anisotropic quadratic form of dimension $D$ defined over $k$ and let $X$ be the associated quadric. Over some field extension $F$ of $k$, the quadric $X_{F}$ becomes split. A computation of $CH^{*}(X_{F})$ can be found in \cite[Chapter XIII]{EKM}. Let $h \in CH^{1}(X_{F})$ denote the pullback of the hyperplane class in $\mathbb{P}(V)$ and let $l_{d} \in CH_{d}(X_{F})$ denote the class of a $d$-dimensional subspace in $X_{F}$ where $d= \lfloor (D-1)/2 \rfloor$. Let $l_{i}=h^{i} \cdot l_{d}$ for $ 0 \leq i \leq d$.

\begin{proposition}
As an additive group, $CH^{*}(X_{F})$ is freely generated by $h^{i}, l_{i}$ for $ 0 \leq i \leq d$. For the ring structure, $h^{d+1}=2l_{D-d-1}$, $l^{2}_{d}=0$ if $4$ does not divide $D$, and $l^{2}_{d}=l_{0}$ if $4$ divides $D$.
\end{proposition}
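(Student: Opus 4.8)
The plan is to reduce to the classical computation of the Chow ring of a split projective quadric (compare \cite[Chapter XIII]{EKM}) and to run it by induction on $D$. Since $q_{F}$ is split, $X_{F}$ is a projective homogeneous variety for the split special orthogonal group and hence admits an affine paving by Bruhat cells whose closures are defined over the prime field; so $CH^{*}(X_{F})$ is a free abelian group on the classes of these cell closures, and the base-change map between the Chow rings of two splitting fields is an isomorphism. It therefore suffices to compute over one convenient $F$. I would induct on $D$ (equivalently on $n \coloneqq \dim X_{F} = D-2$), disposing of the base cases $D=2$ (two reduced rational points) and $D=3$ (the conic $\mathbb{P}^{1}_{F}$) directly.

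For the inductive step I would use the tangent hyperplane section: fix a rational point $x_{0} \in X_{F}$ and let $T \subset \mathbb{P}(V)$ be its embedded tangent hyperplane. Putting $q_{F}$ in split normal form, one checks that $X_{F} \cap T$ is the projective cone with vertex $x_{0}$ over a split quadric $X'_{F}$ of dimension $n-2$, and that $X_{F} \setminus (X_{F} \cap T) \cong \mathbb{A}^{n}_{F}$. The complement of the vertex in the cone is an affine line bundle over $X'_{F}$, so homotopy invariance gives $CH_{i}(X_{F} \cap T) \cong CH_{i-1}(X'_{F})$ for $i \ge 1$; plugging this into the localization sequence $CH_{*}(X_{F}\cap T) \to CH_{*}(X_{F}) \to CH_{*}(\mathbb{A}^{n}_{F}) \to 0$, which is short exact and split since all varieties involved are cellular, gives $CH_{i}(X_{F}) \cong CH_{i-1}(X'_{F})$ for $0 < i < n$ together with $CH_{0}(X_{F}) \cong CH_{n}(X_{F}) \cong \mathbb{Z}$. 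Unwinding the recursion and tracking generators --- the hyperplane class restricts compatibly through the decomposition, and the cone over an $i$-dimensional linear subspace of $X'_{F}$ is an $(i+1)$-dimensional linear subspace of $X_{F}$ --- yields the asserted free basis $h^{i}, l_{i}$ for $0 \le i \le d$, the rank at the middle degree being doubled exactly when $D$ is even.

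For the ring structure both relations are intersection numbers that I would evaluate with explicit representatives in the split model. For $h^{d+1} = 2 l_{D-d-1}$: at the middle codimension one has $h^{d} = l^{+}+l^{-}$, the sum of the two families of maximal linear subspaces (when $D$ is even), and multiplying by $h$ once more, using $h \cdot l^{\pm} = l'$ for the unique linear subspace class of one smaller dimension, merges the two terms and produces the factor $2$ (for odd $D$ the same argument runs with the single family). For $l_{d}^{2}$: this self-intersection number equals the intersection number of two maximal linear subspaces of the same family in general position, whose scheme-theoretic intersection is a linear subspace whose dimension is congruent modulo $2$ to a value fixed by $n$ and which is empty precisely when the associated virtual dimension is negative; whether that happens is governed by the residue of $D$ modulo $4$, which gives $l_{d}^{2}=0$ or $l_{0}$ as claimed (equivalently, one solves for $l_{d}^{2}$ from $l_{d}\cdot h^{d} = l_{0}$ together with the expansion of $h^{d}$ on the two middle-degree generators). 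I expect the only genuine difficulty to be this last bookkeeping --- fixing the sign and parity conventions so that the factor $2$ and the mod-$4$ dichotomy come out correctly and uniformly in the parity of $D$; the additive part is a routine induction once the cone decomposition is in place.
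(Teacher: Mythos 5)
The paper offers no proof of this proposition --- it is recalled as background with a pointer to [EKM, Chapter XIII] --- so there is no internal argument to compare against; what you have written is essentially a reconstruction of the standard proof found in that reference. Your additive argument (Bruhat cells to get freeness and independence of the choice of splitting field, then induction via the decomposition of $X_{F}$ into an affine cell and the tangent hyperplane section, which is a cone over a split quadric of dimension two less, combined with homotopy invariance and the localization sequence) and your multiplicative argument (intersection numbers of rulings) follow the classical route and are sound, including in characteristic $2$, where the splitting $q \cong \mathbb{H} \perp q'$ still produces the cone decomposition. The one step you should not leave to the word ``cellular'' is left-exactness of the localization sequence: a priori it is only right exact, so after producing the generators $h^{i}, l_{i}$ you still need their linear independence, which is obtained most cleanly from the degree pairing ($\deg(h^{i}\cdot l_{i})=1$, with the other products of complementary dimension vanishing or even) or from a rank count against the number of Bruhat cells.

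One warning about the target statement: as printed it does not type-check, so when you ``fix the sign and parity conventions'' you will not land on it exactly. The class $h^{d+1}$ lies in $CH_{D-d-3}(X_{F})$, not $CH_{D-d-1}(X_{F})$; the definition $l_{i}=h^{i}\cdot l_{d}$ should read $l_{d-i}=h^{i}\cdot l_{d}$; for odd $D$ the correct index is $d=\lfloor D/2\rfloor-1$ rather than $\lfloor (D-1)/2\rfloor$ (a conic contains no lines); and the dichotomy for $l_{d}^{2}$ is governed by the parity of $d$, since two maximal linear subspaces in the same ruling meet in dimension congruent to $d$ modulo $2$ --- for even $D$ this gives $l_{d}^{2}=l_{0}$ exactly when $D\equiv 2 \bmod 4$. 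Indeed, for $D=4$ one has $X_{F}\cong \mathbb{P}^{1}\times\mathbb{P}^{1}$ and $l_{1}^{2}=0$, which contradicts the printed condition ``$4$ divides $D$.'' You should therefore prove the correctly indexed statement as it appears in [EKM, Chapter XIII]; your method does exactly that.
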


\indent From Corollary \ref{P:flagvar}, the action of the Steenrod squares $\textrm{Sq}^{2n}_{F}$ on $CH^{*}(X_{F})/2$ agrees with the action of Steenrod squares on the mod $2$ Chow ring of a split quadric in characteristic $0$. We refer to \cite[Corollary 78.5]{EKM} for the calculation of the action of Steenrod squares on the mod $2$ Chow ring of a split quadric in characteristic $0$.

\begin{proposition} \label{P:action on quad}
For any $0 \leq i \leq d$ and $j \geq0$, $$\mathrm{Sq}^{2j}_{F}(h^{i})=\binom{i}{j}h^{i+j} \, \,and \, \, \,
      \mathrm{Sq}^{2j}_{F}(l_{i})=\binom{D+1-i}{j}l_{i-j}.$$
\end{proposition}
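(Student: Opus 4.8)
The plan is to deduce both formulas from the corresponding computation over a field of characteristic $0$. Since $\mathrm{Sq}^{2j}_{F}=P^{j}_{F}$ by definition, and since the split quadric $X_{F}$ is a smooth split quadric, hence a flag variety $G/P$ for a split reductive group $G$ (for $D$ even this is $SO_{D}/P$, and similarly in the remaining cases), Corollary~\ref{P:flagvar} identifies the action of $P^{j}_{F}$ on $CH^{*}(X_{F})/2$ with the action of Voevodsky's $P^{j}_{K}$ on the mod $2$ Chow ring of the split quadric of the same dimension over a field $K$ of characteristic $0$, compatibly with the basis $h^{i},l_{i}$. As the Bockstein vanishes on Chow groups, that action is the classical cohomological Steenrod action on $CH^{*}/2$, whose values on $h^{i}$ and on linear subspace classes are recorded in \cite[Corollary 78.5]{EKM}; reading off those values gives the two displayed identities. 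Thus the essential content of the proof is this reduction, and I would then, for the reader's convenience, re-derive the first identity intrinsically.

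For that intrinsic derivation: $P^{0}_{F}=\mathrm{id}$ gives $\mathrm{Sq}^{0}_{F}(h)=h$; the $p$th-power property (Proposition~\ref{P:pthpower} and its corollary, with $n=p=2$) gives $\mathrm{Sq}^{2}_{F}(h)=P^{1}_{F}(h)=h^{2}$; and $P^{j}_{F}$ vanishes on $CH^{m}(X_{F})/2$ for $m<j$, so $\mathrm{Sq}^{2j}_{F}(h)=0$ for $j\geq 2$. Hence the total square is $\mathrm{Sq}_{F}(h)=h+h^{2}=h(1+h)$, and the Cartan formula of Section~\ref{sectioncartan} yields
\[
\mathrm{Sq}_{F}(h^{i})=\bigl(h(1+h)\bigr)^{i}=\sum_{j\ge 0}\binom{i}{j}h^{i+j},
\]
which is the first formula after extracting homogeneous components. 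The Cartan formula also reduces the second formula to knowing $\mathrm{Sq}_{F}$ on the single fundamental class $l_{d}$, since every $l_{i}$ is $h^{i}l_{d}$; given $\mathrm{Sq}_{F}(l_{d})$, the product $\mathrm{Sq}_{F}(h^{i})\cdot\mathrm{Sq}_{F}(l_{d})$ collapses, using the definition of the $l_{i}$ and the ring relations of the preceding proposition ($h^{d+1}\equiv 0\bmod 2$, $l_{d}^{2}\in\{0,l_{0}\}$), to a single binomial coefficient by a Vandermonde-type identity, producing $\binom{D+1-i}{j}$.

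The one non-formal input is the value of $\mathrm{Sq}_{F}$ on the linear subspace class $l_{d}$, which is not pinned down by the Cartan formula, instability, and the $p$th-power property alone; this is precisely where the characteristic-$0$ comparison of Corollary~\ref{P:flagvar} together with \cite[Corollary 78.5]{EKM} enters. The main obstacle is therefore bookkeeping rather than mathematics: verifying that Corollary~\ref{P:flagvar} genuinely applies to $X_{F}$ — that $X_{F}$ admits an integral model over $D$ with reduced fibers and a field-independent cell structure, which for split quadrics is handled exactly as in the proof of Corollary~\ref{P:flagvar} — and matching the index conventions relating Voevodsky's $P^{j}$ on $H^{2*,*}(-,\mathbb{F}_{2})$ to the Steenrod squares on Chow groups used in \cite{EKM} (the shift $\mathrm{Sq}^{2j}\leftrightarrow P^{j}$, together with the identification of the classes $l_{i}$ here and in loc. cit.). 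Once these are in place, the formulas follow immediately.
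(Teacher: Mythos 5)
Your reduction is exactly the paper's argument: Proposition \ref{P:action on quad} is justified there solely by invoking Corollary \ref{P:flagvar} to transfer the characteristic-$0$ computation of \cite[Corollary 78.5]{EKM} to $CH^{*}(X_{F})/2$, with no further proof supplied. Your supplementary intrinsic derivation of the first formula via the Cartan formula, the $p$th-power property, and instability is correct and consistent with this, but the essential content of your proposal coincides with the paper's.
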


\indent To state our results, we recall the definition of relative higher Witt indices. Let $\varphi$ be a nonsingular quadratic form over a field $F$ and let $F(\varphi)$ denote the function field of the associated quadric. Let $\varphi_{an}$ denote the anisotropic part of $\varphi$ and let $\frak{i}_{0}(\varphi)$, the Witt index of $\varphi$, denote the dimension of a maximal isotropic subspace for $\varphi$. Start with $\varphi_{0}\coloneqq \varphi_{an}$ and $F_{0} \coloneqq F$. Inductively define $F_{i} \coloneqq F_{i-1}(\varphi_{i-1})$ and $\varphi_{i} \coloneqq (\varphi_{F_{i}})_{an}$ for $i>0$. There exists an integer $\frak{h}(\varphi)$ which is called the height of $\varphi$ such that $\mathrm{dim}\varphi_{\frak{h}(\varphi)} \leq 1$. For $1\leq j \leq \frak{h}(\varphi)$, we then define the $j$th relative higher Witt index $\frak{i}_{j}(\varphi)$ to be $\frak{i}_{0}(\varphi_{F_{j}})-\frak{i}_{0}(\varphi_{F_{j-1}})$. 

\indent We recall Hoffmann's conjecture on the possible values of the first Witt index of an anisotropic quadratic form. Hoffmann's conjecture was originally restricted to quadratic forms over a field of characteristic $\neq 2$ but it makes sense to consider the conjecture in characteristic $2$ as well. For an integer $n$, let $v_{2}(n)$ denote the $2$-adic exponent of $n$.

\begin{conjecture}
Let $\varphi$ be an anisotropic quadratic form over a field $F$ such that $\mathrm{dim}\varphi \geq 2$. Then $\frak{i}_{1}(\varphi) \leq 2^{v_{2}(\mathrm{dim}\varphi-\frak{i}_{1}(\varphi))}.$
\end{conjecture}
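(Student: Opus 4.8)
Let $\varphi$ be a nonsingular anisotropic quadratic form over $k$ with $\mathrm{char}(k)=2$, set $D\coloneqq\dim\varphi$ and $i\coloneqq\mathfrak i_1(\varphi)$, and let $X$ be the associated projective quadric; since $\varphi$ is nonsingular, $X$ is smooth of dimension $D-2$. If $D\le 2$ then $i\le 1$ and the inequality holds, so assume $D\ge 3$ (note $i\le D-i$ in any case). The plan is to run the characteristic $\ne 2$ argument of Karpenko \cite{Kar} (see also \cite[Sections 79--82]{EKM}), with the operations $P^n_k=\mathrm{Sq}^{2n}_k$ of the preceding sections playing the role of the classical Steenrod squares. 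This is legitimate because every ingredient of that argument is now available over $k$: the Cartan formula of Section \ref{sectioncartan} and the Adem relations on $CH^*(-)/2$; the fact that $P^n_k$ is the $p$th power on $CH^n(-)/2$ and vanishes on $CH^m(-)/2$ for $m<n$; the proper pushforward and Wu formulas of Proposition \ref{pushforward}; and, most importantly, the fact that over $\bar k$ the quadric $X$ splits with the same mod $2$ Chow ring and the same action of the squares as in characteristic $0$, by the computation of $CH^*(X_F)$ recalled above, Proposition \ref{P:action on quad}, and Corollary \ref{P:flagvar}.

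\textbf{A rational cycle detecting $\mathfrak i_1$.} Over $L\coloneqq k(X)$ the form $\varphi_L$ is isotropic of Witt index $i$, so $X_L$ contains a projective linear subspace of dimension $i-1$, whose class over $\bar L$ is $l_{i-1}$. Regarding $X_L$ as the generic fibre of $\mathrm{pr}_1\colon X\times_k X\to X$ and taking closures produces a cycle $\alpha\in CH_*(\bar X\times\bar X)/2$ that is defined over $k$ (it lies in the image of $CH_*(X\times_k X)/2$) and whose restriction to the generic point of the first factor is $l_{i-1}$, the remaining terms being supported on classes $l_a$ with $a<i-1$ and on powers of $h$. Equivalently this step may be organised through the $k$-rational idempotent in the codimension $D-2$ part of $CH_*(\bar X\times\bar X)/2$ cutting out the upper indecomposable motivic summand of $X$; that the index which appears is precisely $i$ is the standard cycle-theoretic description of the first Witt index, and is insensitive to the characteristic.

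\textbf{Applying the squares and concluding.} Apply the total square to $\alpha$, absorbing tangent-bundle twists via Proposition \ref{pushforward}. Since $P^n_k$ commutes with base change (Corollary \ref{corbasechange}), the image is again defined over $k$; the Cartan formula then reduces the computation over $\bar k$ to the explicit action $\mathrm{Sq}^{2j}_{\bar k}(h^a)=\binom{a}{j}h^{a+j}$ and $\mathrm{Sq}^{2j}_{\bar k}(l_a)=\binom{D+1-a}{j}l_{a-j}$ of Proposition \ref{P:action on quad}. As $X$ is anisotropic of positive dimension, $n_X=2$ by Springer's theorem, so the classes of $CH_*(\bar X)/2$ that are defined over $k$ are spanned by the $h^a$ (together with the doubled middle class when $D$ is even), with no $l_a$ below the middle; demanding that the image of $\alpha$ have this form forces a family of parity conditions on binomial coefficients of the shape $\binom{D-i}{r}$. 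By Kummer's theorem (the $2$-adic valuation of a binomial coefficient counts carries in base $2$) these conditions are equivalent to $2^{\lceil\log_2 i\rceil}\mid D-i$, that is, to $i\le 2^{v_2(D-i)}$, which is the assertion.

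\textbf{Where the work lies.} The binomial bookkeeping of the last step is identical to the classical case; the substance is verifying that the first two steps are genuinely characteristic-free under the nonsingularity hypothesis. Two points require care. First, in characteristic $2$ an anisotropic form need not be nonsingular, and the argument collapses for a singular $\varphi$, whose quadric is not smooth and for which the split-quadric formulas fail; nonsingularity is used precisely to ensure that $X$ is smooth with $CH^*(X_{\bar k})/2$ and the $\mathrm{Sq}$-action as in characteristic $0$ --- which is exactly what the computation of $CH^*(X_F)$ above, Proposition \ref{P:action on quad}, and Corollary \ref{P:flagvar} (through the specialization results of Proposition \ref{P:specialization}) provide. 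Second, when $D$ is even the middle Chow group of $\bar X$ is doubled and one has the extra relations recorded above (a power of $h$ equal to twice a linear class, and $l_d^2\in\{0,l_0\}$); one must confirm that the index $i-1$ of the cycle above sits strictly below the middle and adjust the lattice comparison accordingly, exactly as in \cite[Sections 79--82]{EKM}. With these points settled the argument proceeds as in \cite{Kar}, and in particular it recovers, as a corollary, Haution's parity result for the first Witt index \cite{Hau}.
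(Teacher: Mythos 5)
There is a scope gap you cannot paper over: the statement you were asked to prove is Hoffmann's conjecture in full generality (arbitrary field $F$, arbitrary anisotropic $\varphi$ of dimension $\geq 2$), and the paper itself deliberately leaves it as a \emph{conjecture} --- it only proves the special case of nonsingular anisotropic forms over a field of characteristic $2$ (Proposition \ref{P:newquad}), citing Karpenko for characteristic $\neq 2$ and Scully for totally singular forms. Your proposal silently restricts in its first sentence to ``nonsingular $\varphi$ over $k$ with $\mathrm{char}(k)=2$'' and never returns to the general statement. The case of a singular but not totally singular anisotropic form in characteristic $2$ is genuinely open: there the quadric $X$ is not smooth, the split-quadric computation of $CH^*(X_F)$, Proposition \ref{P:action on quad}, and Corollary \ref{P:flagvar} all fail, and no Steenrod-theoretic argument is available. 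So as a proof of the stated conjecture your argument is incomplete in an essential way, not merely in presentation; at best it establishes Proposition \ref{P:newquad} (and, by citing Karpenko, the characteristic $\neq 2$ case).

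For the special case you do treat, your route coincides with the paper's. The paper's proof of Proposition \ref{P:newquad} consists of verifying that the ingredients of Karpenko's argument as presented in \cite[Proposition 79.4]{EKM} --- the action of the squares on the split quadric (Proposition \ref{P:action on quad} via Corollary \ref{P:flagvar}), base change (Corollary \ref{corbasechange}), the Cartan formula, and the characteristic-free shell-triangle results of \cite[Sections 72, 73]{EKM} --- are now available over $k$, and then invoking that proof verbatim. You unpack the skeleton of that proof (the rational cycle on $X\times X$ detecting $\mathfrak i_1$, the total operation with the tangent-bundle twist, the binomial-coefficient conditions read off via Kummer's theorem), which is a fair summary, though the step asserting that the only rational classes in $CH_*(\bar X)/2$ are spanned by powers of $h$ is where the actual shell-triangle analysis of \cite[Lemma 79.3]{EKM} lives and would need to be reproduced, not just asserted. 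If you intend to claim only the nonsingular characteristic $2$ case, say so in the statement; if you intend to claim the conjecture, you must either prove or explicitly concede the singular non-totally-singular case.
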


\indent Hoffmann's original conjecture for characteristic $\neq 2$ was proved by Karpenko in \cite{Kar}. Karpenko's proof makes use of Steenrod squares on mod $2$ Chow groups. With our construction of Steenrod squares on the mod $2$ Chow groups over a base field of characteristic $2$, we can now prove Hoffmann's conjecture for nonsingular anisotropic quadratic forms over a field of characteristic $2$.

\begin{proposition} \label{P:newquad}
Let $\varphi$ be a nonsingular anisotropic quadratic form over $k$ such that $\mathrm{dim}\varphi \geq 2$. Then $\frak{i}_{1}(\varphi) \leq 2^{v_{2}(\mathrm{dim}\varphi-\frak{i}_{1}(\varphi))}.$
\end{proposition}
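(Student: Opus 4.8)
The plan is to reduce Hoffmann's conjecture in the nonsingular characteristic $2$ case to the characteristic $0$ computation of the action of Steenrod squares on the mod $2$ Chow ring of a split quadric, following Karpenko's argument in \cite{Kar} essentially verbatim. The key point that was missing in characteristic $2$ is now supplied by the results of this paper: the operations $\textup{Sq}^{2n}_{k}$ exist on $CH^{*}(-)/2$ over a field of characteristic $2$, they satisfy the Cartan formula and Adem relations (Sections \ref{sectioncartan} and on Adem relations), $\textup{Sq}^{2n}_{k}$ is the $2$nd power on $CH^{n}(-)/2$, vanishes on $CH^{m}(-)/2$ for $m<n$, and is compatible with proper pushforward via the Wu-type formula of Proposition \ref{pushforward}. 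Moreover, by Corollary \ref{P:flagvar} and Proposition \ref{P:action on quad}, the action of $\textup{Sq}^{2j}_{F}$ on the mod $2$ Chow ring of a split quadric over a field $F$ of characteristic $2$ is given by the same binomial-coefficient formulas as in characteristic $0$.

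The main steps, in order, would be the following. First, set $\varphi$ nonsingular anisotropic of dimension $D=\mathrm{dim}\varphi$, let $i_{1}=\frak{i}_{1}(\varphi)$, let $X$ be the projective quadric of $\varphi$ (smooth since $\varphi$ is nonsingular), and pass to the function field $k(X)$; then $\varphi_{k(X)}$ has Witt index $i_{1}$, so there is a totally isotropic subspace of dimension $i_{1}-1$ in $X_{k(X)}$, and correspondingly a class in $CH_{i_{1}-1}(X_{k(X)})$. Second, one studies the "essential" part of $CH^{*}(X\times X)/2$ and the diagonal class $[\Delta_{X}]\in CH^{\dim X}(X\times X)/2$; over the splitting field $F$ the diagonal decomposes in terms of the $h^{i},l_{i}$ basis of Proposition \ref{P:action on quad} and its companion, and rationality of certain of these cycles over $k$ (equivalently, over $k(X)$) is controlled by $i_{1}$. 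Third, apply the total Steenrod square $\textup{Sq}$ together with its compatibility with pushforward along the two projections and along the diagonal embedding (Proposition \ref{pushforward} and the Wu formula), and use Proposition \ref{P:action on quad} to compute $\textup{Sq}$ on the relevant basis elements over $F$; since $\textup{Sq}$ commutes with the scalar-extension maps $CH^{*}(X)/2\to CH^{*}(X_{F})/2$ (Corollary \ref{corbasechange}) and with the specialization/pushforward operations, a cycle rational over $k$ has image under $\textup{Sq}$ again rational over $k$. Fourth, combine the binomial identities produced this way with the divisibility constraint: if $i_{1}>2^{v_{2}(D-i_{1})}$, then a suitable binomial coefficient $\binom{D-i_{1}}{j}$ (or $\binom{D+1-i}{j}$ in the notation of Proposition \ref{P:action on quad}) is odd for some $j$ in the forbidden range, forcing the rationality of a cycle $l_{m}$ with $m$ smaller than allowed by the value $i_{1}$ of the first Witt index — a contradiction with the definition of $\frak{i}_{1}(\varphi)$. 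This yields $i_{1}\le 2^{v_{2}(D-i_{1})}$.

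The routine parts are the Chow-ring bookkeeping over the splitting field and the binomial-coefficient manipulations (Lucas' theorem), which are identical to \cite{Kar} and \cite[Sections 79--82]{EKM}. The step requiring the most care is verifying that every tool Karpenko uses transfers to characteristic $2$ with the \emph{nonsingular} hypothesis on $\varphi$: one must check that $X$ is smooth and of the expected dimension, that the mod $2$ Chow ring of the split quadric $X_{F}$ has exactly the structure of Proposition \ref{P:action on quad} (including the $4\mid D$ versus $4\nmid D$ dichotomy for $l_{d}^{2}$), and that the Steenrod squares constructed here are compatible with the cycle-class maps, pullbacks along dominant morphisms to function fields, and proper pushforwards used in the argument — all of which are established in the preceding sections (Corollary \ref{corbasechange}, Proposition \ref{pushforward}, Corollary \ref{P:flagvar}, Proposition \ref{P:specialization}). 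Given these, the argument of \cite{Kar} goes through unchanged, and we conclude $\frak{i}_{1}(\varphi)\le 2^{v_{2}(\mathrm{dim}\varphi-\frak{i}_{1}(\varphi))}$.
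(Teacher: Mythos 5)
Your proposal takes essentially the same route as the paper: both reduce to Karpenko's characteristic-$0$ argument (as presented in \cite[Proposition 79.4]{EKM}) and verify that the required inputs — the action of $\textup{Sq}^{2j}$ on the split quadric, base change, the Cartan formula, and pushforward compatibility — are supplied by the earlier sections. The paper's own proof is just a terser version of this, additionally noting that the shell-triangle results of \cite[Sections 72--73]{EKM} and the conclusion of \cite[Lemma 79.3]{EKM} carry over, which your sketch covers implicitly.
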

\begin{proof}
The proof of \cite[Proposition 79.4]{EKM} works in this case and uses the computation of the Steenrod squares on the mod $2$ Chow ring of a split quadratic given by Proposition \ref{P:action on quad} along with Corollary \ref{corbasechange} on base change of the Steenrod squares. From the Cartan formula \ref{sectioncartan} and results on shell triangles in \cite[Sections 72,73]{EKM} that were proved in arbitrary characteristic, we see that the conclusion of \cite[Lemma 79.3]{EKM} holds for nonsingular anisotropic quadratic forms in characteristic $2$.
\end{proof}

\indent Proposition \ref{P:newquad} provides further evidence for the validity of Hoffmann's conjecture in characteristic $2$. Scully has proved that Hoffmann's conjecture is valid for totally singular quadratic forms over a field of characteristic $2$ \cite{Scu}.  

\indent To finish, we extend $3$ more results of Karpenko on quadratic forms in characteristic $ \neq 2$ to the case of nonsingular anisotropic quadratic forms in characteristic $2$. Let $\varphi$ be a nonsingular anisotropic quadratic form defined over a field $k$ of characteristic $2$ with relative higher Witt indices $\frak{i}_{j} \coloneqq \frak{i}_{j}(\varphi)$ as defined above for $j=1, \ldots, \frak{h}\coloneqq \frak{h}(\varphi).$

\begin{proposition}
Assume that $\frak{h}>1$. Then $$v_{2}(\frak{i}_{1}) \geq \textup{min}(v_{2}(\frak{i}_{2}), \ldots, v_{2}(\frak{i}_{\frak{h}}))-1.$$
\end{proposition}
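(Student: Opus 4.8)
The plan is to transcribe Karpenko's proof of the analogous inequality in characteristic $\neq 2$ (see \cite{Kar} and \cite[Sections 79--82]{EKM}), replacing its characteristic-$0$ Steenrod-square inputs by the ones established above. The ingredients used are: the Cartan formula (Section \ref{sectioncartan}); the compatibility of $\textrm{Sq}^{2n}_{k}$ with base change (Corollary \ref{corbasechange}); the compatibility with proper pushforward via the homological total square $P^{X}$ (Proposition \ref{pushforward}); and the explicit formulas $\textrm{Sq}^{2j}_{F}(h^{i})=\binom{i}{j}h^{i+j}$, $\textrm{Sq}^{2j}_{F}(l_{i})=\binom{D+1-i}{j}l_{i-j}$ for a split quadric (Proposition \ref{P:action on quad}), which agree with the characteristic-$0$ values. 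Since $\varphi$ is nonsingular, its quadric $X$ and all of the quadrics and products of quadrics that occur are smooth, so these results apply.

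First I would fix notation following \cite[Chapter XIII]{EKM}: let $X$ be the quadric of $\varphi$, let $k_{1}=k(X)$ so that $\varphi_{k_{1}} \cong \frak{i}_{1}\mathbb{H} \perp \varphi_{1}$ with $\varphi_{1}$ anisotropic and nonsingular of height $\frak{h}-1$ and splitting pattern $(\frak{i}_{2},\ldots,\frak{i}_{\frak{h}})$, and record the additive generators $h^{i}, l_{i}$ of $\mathrm{Ch}^{*}(X_{\bar{k}})$ together with the description, in terms of the $\frak{i}_{j}$, of which of these classes lie in the image of $\mathrm{Ch}^{*}(X) \to \mathrm{Ch}^{*}(X_{\bar{k}})$ (and more generally of the rational cycles on $X \times X$). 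The core of the argument is to apply $P^{X}$ to a carefully chosen rational cycle --- the one governing the bottom ``shell'' of the quadric --- and to extract from Proposition \ref{P:action on quad} a system of congruences on binomial coefficients of the form $\binom{D+1-i}{j}$; feeding in the rationality information determined by $\frak{i}_{2},\ldots,\frak{i}_{\frak{h}}$ (which controls the range of indices $i$ for which $l_{i}$ is rational over $k_{1}$), these congruences force $v_{2}(\frak{i}_{1}) \geq \min(v_{2}(\frak{i}_{2}),\ldots,v_{2}(\frak{i}_{\frak{h}}))-1$. All of the purely combinatorial bookkeeping --- the shell-triangle formalism of \cite[Sections 72, 73]{EKM} and the standard isotropic-reduction manipulations --- is characteristic-free and can be quoted verbatim.

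The main obstacle is not conceptual but a matter of diligence: one must check that every geometric step of Karpenko's argument --- passing to function fields of quadrics, computing $\mathrm{Ch}^{*}$ of split quadrics and of products of quadrics, and tracking rational cycles through isotropic reductions --- survives in characteristic $2$ under the sole hypothesis that $\varphi$ (and hence each successive form $\varphi_{j}$) is nonsingular. The computation of $\mathrm{Ch}^{*}$ of a split quadric in \cite[Chapter XIII]{EKM} is insensitive to the characteristic, and the generic splitting theory and shell machinery of \cite[Sections 72, 73]{EKM} were proved in arbitrary characteristic, so once these routine verifications are in place the substitution of Proposition \ref{P:action on quad} and Corollary \ref{corbasechange} for their characteristic-$0$ analogues yields the result.
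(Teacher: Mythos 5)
Your proposal is correct and follows essentially the same route as the paper: transplant the characteristic $\neq 2$ argument from \cite[Sections 79--82]{EKM} and verify that the needed Steenrod-square properties hold over $k$. The paper is a bit more surgical about which intermediate results are the crux --- it cites \cite[Corollary 81.19]{EKM} as the statement being carried over, and isolates \cite[Lemma 80.1]{EKM} and \cite[Theorem 80.2]{EKM} as the steps that require Steenrod input, supplied by the $p$th-power property (Proposition \ref{P:pthpower}) together with proper-pushforward compatibility (Proposition \ref{pushforward}); your list of ingredients (Cartan, base change, pushforward, split-quadric action) overlaps this and would get you there, since Proposition \ref{P:action on quad} subsumes the $p$th-power computation on quadrics.
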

 \begin{proof} The analogue of this proposition in characteristic $\neq2$ can be found in \cite[Corollary 81.19]{EKM}. The proof of \cite[Corollary 81.19]{EKM} works over a base field of characteristic $2$ using the properties we have established for the Steenrod squares $\textup{Sq}^{2n}_{k}.$ The conclusions of \cite[Lemma 80.1]{EKM} and \cite [Theorem 80.2]{EKM} hold in our situation since $\textup{Sq}^{2n}_{k}$ acts by squaring on $CH^{n}(-)/2$ by Proposition \ref{P:pthpower} and the total homological Steenrod square commutes with proper pushforward by Proposition \ref{pushforward}.
\end{proof}

\indent We next discuss the characteristic $2$ analogue of the ``holes in $I^n$" result \cite[Corollary 82.2]{EKM}. For a field $F$, the quadratic Witt group $I_{q}(F)$ is defined as the quotient of the Grothendieck group of the monoid of isometry classes of even-dimensional nonsingular quadratic forms by the subgroup generated by the hyperbolic plane \cite[Section 8]{EKM}. There is an action of the Witt ring $W(F)$ of nondegenerate symmetric bilinear forms on $I_{q}(F)$. Let $I(F) \subset W(F)$ denote the fundamental ideal of $W(F)$ and set $I_{q}^{n}(F) \coloneqq I^{n-1}(F) \cdot I_{q}(F)$ for $n \geq 1$. Let $k$ be a field of characteristic $2$. Mimicking the proof \cite[Corollary 82.2]{EKM} with $I_{q}^{n}(k)$ used in place of $I^{n}(k)$, we get the following result. Let $n \geq 1$.

\begin{proposition} Let $\varphi \in I_{q}^{n}(k)$ be a nonsingular anisotropic quadratic form such that $\textup{dim} \varphi < 2^{n+1}.$ Then there exists $  0 \leq i \leq n$ such that $\textup{dim} \varphi=2^{n+1}-2^{i+1}.$

\end{proposition}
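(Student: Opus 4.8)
The plan is to transcribe the proof of \cite[Corollary 82.2]{EKM}, using the quadratic power $I_{q}^{n}$ in place of $I^{n}$. As recalled at the start of this section, the one ingredient of \cite[Sections 79--82]{EKM} that was previously unavailable in characteristic $2$ is a theory of Steenrod squares $\textrm{Sq}^{2n}_{k}$ with the expected properties, and these are now at hand: they obey the Cartan formula (Section \ref{sectioncartan}), act as the $p$th power on $CH^{n}(-)/2$ (Proposition \ref{P:pthpower}), commute with field extension (Corollary \ref{corbasechange}) and with proper pushforward along projective morphisms via Panin's Riemann--Roch (Proposition \ref{pushforward}), and act on the mod $2$ Chow ring of a split quadric by the classical formulas (Corollary \ref{P:flagvar}, Proposition \ref{P:action on quad}). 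The substitution is legitimate because: every class in $I_{q}^{n}(k)$ is represented by an even-dimensional nonsingular quadratic form, whose anisotropic part is again even-dimensional and nonsingular --- so the quadrics in the generic splitting tower of $\varphi$ are smooth and the operations $\textrm{Sq}^{2n}_{k}$ apply to them; $I_{q}^{n}$ is stable under arbitrary field extension and under passage to anisotropic parts, since $\varphi_{F(\varphi)}$ and $\varphi_{1}\coloneqq(\varphi_{F(\varphi)})_{an}$ have the same class in $I_{q}(F(\varphi))$; and the Arason--Pfister Hauptsatz holds for quadratic forms in characteristic $2$, so a nonzero anisotropic form in $I_{q}^{n}$ has dimension at least $2^{n}$.

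Granting these, one argues by induction on $\dim\varphi$. If $\dim\varphi=0$, take $i=n$. Otherwise $\dim\varphi\geq 2^{n}\geq 2$, so $\mathfrak{i}_{1}\coloneqq\mathfrak{i}_{1}(\varphi)\geq 1$, and over $F(\varphi)$ one gets a nonsingular anisotropic $\varphi_{1}\in I_{q}^{n}(F(\varphi))$ with $\dim\varphi_{1}=\dim\varphi-2\mathfrak{i}_{1}<\dim\varphi$. If $\varphi_{1}=0$, then $\varphi$ is hyperbolic over its own function field, hence (by the characteristic-$2$ version of the Pfister-form characterization, as in \cite[Section 23]{EKM}) similar to an $m$-fold quadratic Pfister form, and $2^{n}\leq\dim\varphi=2^{m}<2^{n+1}$ forces $\dim\varphi=2^{n}=2^{n+1}-2^{n}$. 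If $\varphi_{1}\neq 0$, the induction hypothesis gives $\dim\varphi_{1}=2^{n+1}-2^{j+1}$ with $2^{j+1}\leq\dim\varphi_{1}\leq 2^{n}$, so $j\leq n-1$; from $\dim\varphi=\dim\varphi_{1}+2\mathfrak{i}_{1}<2^{n+1}$ one gets $\mathfrak{i}_{1}<2^{j}$, hence $v_{2}(\mathfrak{i}_{1})<j+1=v_{2}(2^{n+1}-2^{j+1})$, so that $v_{2}(\dim\varphi-\mathfrak{i}_{1})=v_{2}\bigl((2^{n+1}-2^{j+1})+\mathfrak{i}_{1}\bigr)=v_{2}(\mathfrak{i}_{1})$ and Proposition \ref{P:newquad} forces $\mathfrak{i}_{1}=2^{v_{2}(\mathfrak{i}_{1})}$ to be a power of $2$. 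It then remains to exclude $\mathfrak{i}_{1}=2^{t}$ with $t\leq j-2$ (which would produce the dimension $2^{n+1}-2^{j+1}+2^{t+1}$, not of the required form) and to conclude $\mathfrak{i}_{1}=2^{j-1}$, i.e. $\dim\varphi=2^{n+1}-2^{j}$; this is precisely where the Chow-theoretic analysis of quadrics in \cite[Sections 80--82]{EKM} enters, providing the lower bound on $\mathfrak{i}_{1}$ in terms of the splitting pattern (results such as the characteristic-$2$ analogue of \cite[Corollary 81.19]{EKM} proved above are among the tools).

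The main obstacle is this final step: Hoffmann's conjecture and the Hauptsatz by themselves only identify $\mathfrak{i}_{1}$ as a power of $2$, and the sharper lower bound that eliminates the ``two-block'' dimensions $2^{n+1}-2^{j+1}+2^{t+1}$ is exactly what draws on the Steenrod-square computations on mod $2$ Chow groups of quadrics in \cite[Sections 80--82]{EKM}. Since every Steenrod-square input to those sections --- the Cartan formula, the $p$th-power property, base change, pushforward compatibility, and the action on a split quadric --- is now available and agrees with the characteristic-$0$ picture, and since the remaining content of \cite[Sections 72--82]{EKM} (shell triangles, Witt decompositions, generic splitting towers) is characteristic-free, the argument of \cite[Corollary 82.2]{EKM} should go through unchanged. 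The only genuinely new checks are that a nonsingular quadratic form hyperbolic over its own function field is similar to a Pfister form in characteristic $2$, and that each lemma of \cite[Sections 72--82]{EKM} invoked along the way is either stated in arbitrary characteristic or uses only the Steenrod-square properties established here; no new geometric idea is required.
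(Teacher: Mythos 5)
Your proposal is correct and takes essentially the same route as the paper, whose entire proof is the single sentence ``Mimicking the proof of \cite[Corollary 82.2]{EKM} with $I_{q}^{n}(k)$ used in place of $I^{n}(k)$, we get the following result.'' You have simply unpacked what that mimicking involves --- the induction on dimension, the Hauptsatz and Pfister-form inputs, and the identification of which steps rest on the newly constructed Steenrod squares --- in considerably more detail than the paper provides.
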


\indent Our last result concerns $u$-invariants of fields. Following \cite[Section 36]{EKM}, the $u$-invariant $u(F)$ of a field $F$ is defined to be the smallest non-negative integer (or $\infty$ if there is no such integer) $u(F)$ such that every nonsingular locally hyperbolic quadratic form $\varphi$ over $F$ with $\textup{dim}\varphi>u(F)$ is isotropic. Over a field of finite characteristic, every quadratic form is locally hyperbolic. 

\indent In \cite{Vis}, Vishik constructed characteristic $0$ fields of $u$-invariant $2^{r}+1$ for all $r \geq 3$. Karpenko used Steenrod squares on mod $2$ Chow groups to show that for any $r \geq 3$ and any field $F$ of characteristic $\neq 2$, $F$ is contained in a field of $u$-invariant $2^{r}+1$ \cite{Kar2}. Karpenko's constructions in \cite{Kar2} now extend to fields of characteristic $2$ through the use of the Steenrod squares $\textup{Sq}^{2n}_{k}$ defined in this paper for $k$ of characteristic $2$.

\begin{proposition} Let $k$ be a field of characteristic $2$ and let $r \geq 3$. Then $k$ is a subfield of a field of $u$-invariant $2^{r}+1.$
\end{proposition}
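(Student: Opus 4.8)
The plan is to run Karpenko's construction from \cite{Kar2} essentially unchanged, substituting the Steenrod squares $\mathrm{Sq}^{2n}_{k}$ built in this paper for the mod $2$ Chow-group Steenrod squares that were previously available only in characteristic $\neq 2$. Recall the shape of the argument. Starting from $k=k_{0}$, one builds a tower $k_{0}\subset k_{1}\subset\cdots$ in which each $k_{m+1}$ is obtained from $k_{m}$ by adjoining, in a well-ordered (transfinite) fashion, the function fields of all nonsingular anisotropic quadratic forms over $k_{m}$ of dimension $>2^{r}+1$; set $F=\bigcup_{m}k_{m}$. Any nonsingular quadratic form over $F$ of dimension $>2^{r}+1$ is defined over some $k_{m}$, hence isotropic over $k_{m+1}\subseteq F$, so $u(F)\le 2^{r}+1$. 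Working with nonsingular forms throughout is exactly what the $u$-invariant, as defined above, sees, and it is essential in characteristic $2$ because the smooth projective quadrics are precisely those attached to nonsingular forms; nonsingularity is preserved under field extension, so this class of forms is stable along the tower.

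The real content is the lower bound $u(F)\ge 2^{r}+1$: one must produce a nonsingular anisotropic form $\varphi$ of dimension $2^{r}+1$ over $F$. As in \cite{Vis} and \cite{Kar2}, $\varphi$ is taken to be the base change to $F$ of a sufficiently generic nonsingular form defined over a purely transcendental subextension of $F/k$, chosen to be in ``general position.'' One then shows that $\varphi$ survives every step of the tower, i.e. that $\varphi_{E(\psi)}$ stays anisotropic whenever $E$ is an intermediate field over which $\varphi$ is still anisotropic and $\psi$ is a nonsingular anisotropic form over $E$ with $\dim\psi>2^{r}+1$. Here Karpenko's argument reduces, via the motivic decomposition of quadrics and the theory of correspondences, to a computation in the mod $2$ Chow groups of the product $X_{\varphi}\times X_{\psi}$ of the associated smooth quadrics, using: the action of the total Steenrod square on these Chow rings together with the Cartan formula (Section \ref{sectioncartan}); the fact that $\mathrm{Sq}^{2n}_{k}$ is the squaring map on $CH^{n}(-)/2$ (Proposition \ref{P:pthpower}); compatibility of the total homological Steenrod square with proper pushforward (Proposition \ref{pushforward}) and with base change (Corollary \ref{corbasechange}); and the explicit action of $\mathrm{Sq}^{2j}_{F}$ on the Chow ring of a split quadric (Proposition \ref{P:action on quad}). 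These inputs force a contradiction with the genericity of $\varphi$. Every property of the Steenrod squares invoked in \cite{Kar2} has now been established in this paper for nonsingular quadratic forms in characteristic $2$, and the ancillary facts about quadrics from \cite{EKM} that Karpenko uses are either proved there in arbitrary characteristic or supplied in characteristic $2$ by the present section (in particular Proposition \ref{P:newquad}, the holes-in-$I^{n}$ statement, and the divisibility of higher Witt indices proved above).

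I expect the bulk of the work to be bookkeeping rather than new ideas: transcribing Karpenko's correspondence and Chow-group computation on $X_{\varphi}\times X_{\psi}$ into characteristic $2$ and checking, invocation by invocation, that each use of Steenrod squares (Cartan formula, squaring on $CH^{n}/2$, pushforward and base-change compatibility, action on the split quadric, and the vanishing of $\mathrm{Sq}^{2n}_{k}$ on $CH^{m}/2$ for $m<n$) is matched by a result proved earlier. The single genuinely new point to verify is that the whole construction stays inside the class of \emph{nonsingular} quadratic forms and smooth quadrics, so that the Steenrod operations of this paper — only asserted for nonsingular forms in characteristic $2$ — always apply; this is harmless, since the generic form $\varphi$ and the subforms used can be taken nonsingular, function fields of nonsingular forms are function fields of smooth quadrics, and the Chow computations live on smooth quadrics and their products. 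With those checks in place $\varphi$ remains anisotropic over $F$, so $u(F)=2^{r}+1$, and $k$ embeds in $F$, which proves the statement.
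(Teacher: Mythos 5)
Your proposal follows exactly the route the paper takes: the paper's entire justification is the remark that Karpenko's construction in \cite{Kar2} carries over once the Steenrod squares $\mathrm{Sq}^{2n}_{k}$ with their established properties (Cartan formula, $p$th power on $CH^{n}/p$, compatibility with pushforward and base change, action on split quadrics) are available in characteristic $2$. Your write-up is in fact more detailed than the paper's, and your additional check that the construction stays within the class of nonsingular forms and smooth quadrics is a worthwhile point the paper leaves implicit.
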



\begin{thebibliography}{13}
\bibitem{Bro} P. Brosnan. Steenrod operations in Chow theory. {\em Trans. Amer. Math. Soc.} 355 (2003), no. 5, 1869-1903.
\bibitem{Chev}  C. Chevalley. Sur les d\'ecompositions cellulaires des espaces $G/B$. Proc. Sympos. Pure Math., 56, Part 1, {\em Algebraic groups and their generalizations: classical methods (University Park, PA, 1991)}, 1-23, Amer. Math. Soc. (1994).
\bibitem{CisDeg} D.-C. Cisinski and F. D\'e{}glise. Triangulated categories of mixed motives (2012), Preprint, \url{https://arxiv.org/abs/0912.2110}.
\bibitem{Con}  B. Conrad. Reductive group schemes, from {\em Autour des sch\'emas en groupes. Vol. I}, Panoramas et synth\`eses, 42-43, Soc. Math. France, Paris (2014) 94-444.

\bibitem{Dem} M. Demazure. Invariants sym\'etriques entiers des groupes de Weyl et torsion. {\em Invent. Math.} 21 (1973): 287-302. 
\bibitem{EKM} R. Elman, N. Karpenko, and A. Merkurjev. {\em The algebraic and geometric theory of quadratic forms.} Vol. 56 Amer. Math Soc. (2008).
\bibitem{EHK+} E. Elmanto, M. Hoyois, A. A Khan. V. Sosnilo, and M. Yakerson. Motivic infinite loop spaces (2018), Preprint, \url{https://arxiv.org/abs/1711.05248}.
\bibitem{FriSus} E. M. Friedlander and A. Suslin. The spectral sequence relating algebraic $K$-theory to motivic cohomology. {\em Annales scientifiques de l'\'E{}cole Normale Sup\'e{}rieure}, S\'e{}rie 4, Volume 35 (2002), 773-875. 
\bibitem{FrankSpi} M. Frankland and M. Spitzweck. Towards the dual motivic Steenrod algebra in positive characteristic (2018), Preprint,  \url{https://arxiv.org/pdf/1711.05230.pdf}.
\bibitem{Ful} W. Fulton. {\em Intersection theory}. Springer (1998).

\bibitem{Gei} T. Geisser. Motivic cohomology over Dedekind rings. {\em Math. Z.} 248 (2004), 773-794.
\bibitem{Hau4} O. Haution. Integrality of the Chern character in small codimension. {\em Adv. Math.} 231 (2012), no. 2, 855-878.
\bibitem{Hau2} O. Haution. Duality and the topological filtration. {\em Math. Ann.} 357 (2013), no. 4, 1425-1454.
\bibitem{Hau} O. Haution. On the first Steenrod square for Chow groups. {\em Amer. J. Math.} 135 (2013), no. 1, 53-63.
\bibitem{Hau5} O. Haution. Detection by regular schemes in degree two. {\em Algebr. Geom.} 2 (2015), no. 1, 44-61.
\bibitem{Hau3} O. Haution. Involutions of varieties and Rost's degree formula. {\em J. Reine Angew. Math.} 745 (2018), 231-252.
\bibitem{HKO} M. Hoyois, S. Kelly, and P. A. \O{}stv\ae r. The motivic Steenrod algebra in positive characteristic. {\em J. Eur. Math. Soc.} 19 (2017), no. 12.
\bibitem{Kar} N. Karpenko. On the first Witt index of quadratic forms. {\em Invent. Math.} 153 (2003), no. 2, 455-462.
\bibitem{Kar2} N. Karpenko. Variations on a theme of rationality of cycles. {\em Cent. Eur. J. Math.} 11 (2013), no. 6, 1056-1067.


\bibitem{Koc} B. K\"ock. Chow motif and higher Chow theory of G/P. {\em Manuscripta mathematica} 70.4 (1991), 363-372.
\bibitem{KonYas} S. Kondo and S. Yasuda. Product structures in motivic cohomology and higher Chow groups. {\em J. Pure Appl. Algebra} 215 (2011), 511-522.

\bibitem{Lev} M. Levine. Techniques of localization in the theory of algebraic cycles. {\em J. Algebraic Geom.} 10 (2001), 299-363.
\bibitem{Mer} A. Merkurjev. Steenrod operations and degree formulas. {\em J. Reine Angew. Math.} 565 (2003), 12-26.
\bibitem{MorVoe} F. Morel and V. Voevodsky. $A^{1}$-homotopy theory of schemes. {\em Inst. Hautes \'Etudes Sci. Publ. Math.} 90 (1999), 45-143.
\bibitem{Pan} I. Panin. Riemann-Roch theorems for oriented cohomology. {\em Axiomatic, enriched and motivic homotopy theory} (J.P.C. Greenlees, ed.) NATO Sci. Ser. II Math. Phys. Chem. 131 (2004), 261-334, Kluwer Acad. Publ., Dordrecht. 
\bibitem{Rio} J. Riou. Op\'erations de Steenrod motiviques (2012), Preprint, \url{https://www.math.u-psud.fr/~riou/doc/steenrod.pdf}.
\bibitem{Scu} S. Scully. Hoffmann's conjecture for totally singular forms of prime degree. {\em  Algebra Number Theory} Volume 10, no. 5 (2016), 1091-1132.
\bibitem{Ses} C. S. Seshadri. Standard monomial theory and the work of Demazure. {\em Algebraic Varieties and Analytic Varieties}, Mathematical Society of Japan, Tokyo (1983) 355-384.

\bibitem{Spi} M. Spitzweck. A commutative $\mathbb{P}^{1}$-spectrum representing motivic cohomology over Dedekind domains. {\em M\'e{}m. Soc. Math. Fr. (N.S.)} No. 157 (2018), 1-110. 
\bibitem{Vis} A. Vishik. Fields of $u$-invariant $2^{r}+1$. In {\em Algebra, arithmetic and geometry: in honor of Yu. I. Manin.  Vol. II}, {\em Progr. Math.}, vol. 270, 661-685, Birkh\"a{}user Boston, Inc., Boston, MA, 2009.
\bibitem{Voe3} V. Voevodsky. $\mathbf{A}^1$-homotopy theory. {\em Proceedings of the International Congress of Mathematicians, Vol. I (Berlin, 1998)}, Extra Vol. I (1998), 579-604.
\bibitem{Voe2} V. Voevodsky. Motivic cohomology groups are isomorphic to higher Chow groups in any characteristic. {\em Int. Math. Res. Not.} 7 (2002), 351-355.

\bibitem{Voe} V. Voevodsky. Reduced power operations in motivic cohomology. {\em Publ. Math. Inst. Hautes \'Etudes Sci.} 98 (2003), 1-57.


\end{thebibliography}
\end{document}